\newtheorem{theorem}{Theorem}[section]
\newtheorem{lemma}[theorem]{Lemma} 
\newtheorem{proposition}[theorem]{Proposition} 
\newtheorem{thmletter}{Theorem}
\newtheorem{corletter}[thmletter]{Corollary}
\newtheorem{corollary}[theorem]{Corollary} 
\newtheorem{question}{Question}
\newtheoremstyle{example}{3pt}{3pt}{}{10pt}{\itshape}{:}{.5em}{}
\theoremstyle{example}
\newcommand{\p}[1]{\noindent {\newline\bf #1.}}
\newcommand{\out}{\operatorname{Out}}
\newcommand{\aut}{\operatorname{Aut}}
\newcommand{\inn}{\operatorname{Inn}}
\renewcommand{\>}{\rangle}
\newcounter{enumerateCounter}
\def\centerarc[#1](#2)(#3:#4:#5)
\title[HNN-extensions of triangle groups]{Every group is the outer automorphism group of an HNN-extension of a fixed triangle group}
\author{Alan D. Logan}
\subjclass[2010]{20E36, 20E26, 20F06, 20F28, 20F55, 20F65}
\keywords{Outer automorphism groups, triangle groups, HNN-extensions, small cancellation theory, automorphisms of free groups}
\begin{document}

\maketitle

\begin{abstract}
Fix an equilateral triangle group $T_i=\<a, b; a^i, b^i, (ab)^i\>$ with $i\geq6$ arbitrary. Our main result is: for every presentation $\mathcal{P}$ of every countable group $Q$ there exists an HNN-extension $T_{\mathcal{P}}$ of $T_i$ such that $\out(T_{\mathcal{P}})\cong Q$. We construct the HNN-extensions explicitly, and examples are given. The class of groups constructed have nice categorical and residual properties. In order to prove our main result we give a method for recognising malnormal subgroups of small cancellation groups, and we introduce the concept of ``malcharacteristic'' subgroups.
%
\end{abstract}


\section{Introduction}
\label{sec:introduction}
%
%
Every group can be realised as the outer automorphism group of some group \cite{matumoto1989any}. One can ask what restrictions can be placed on the groups involved. Several authors have achieved results in this vein \cite{kojima1988isometry} \cite{gobel2000outer} \cite{droste2001all} \cite{braun2003outer} \cite{frigerio2005countable} \cite{logan2015outer} \cite{LoganNonRecursive} \cite{logan2015Bass}.
Notably, Bumagin--Wise proved that every countable group $Q$ can be realised as the outer automorphism group of a finitely generated group $G_Q$ \cite{BumaginWise2005} (here $G_Q$ is the kernel of a short exact sequence inspired by Rips' construction \cite{rips1982subgroups}), and Minasyan proved that $G_Q$ can additionally be taken to have $2$-conjugacy classes and Kazdhan's property T \cite{minasyan2009groups} (here $G_Q$ is one of Osin's monster groups \cite{Osin2010small}).

We therefore say that a class of finitely generated groups $\mathcal{C}$ \emph{possesses the Bumagin--Wise property} if for every countable group $Q$ there exists a group $G_Q\in \mathcal{C}$ such that $\out(G_Q)\cong Q$. Hence,
Bumagin--Wise prove that a certain class of groups related to Rips' construction possesses the Bumagin--Wise property. Minasyan proved that the class of Osin's monster groups possesses the Bumagin--Wise property. These two classes of groups are well-known to possess pathological properties. Therefore, the results of Bumagin--Wise and Minasyan suggest that the Bumagin--Wise property is a pathological property.


The main result of this paper is Theorem \ref{thm:intro1}, below, which proves that certain classes of ``nice'' HNN-extensions of ``nice'' groups possess the Bumagin--Wise property. Theorem \ref{thm:intro2} then says that this construction is functorial. These results are in stark contrast to the results of Bumagin--Wise and Minasyan, and suggest that the Bumagin--Wise property is not pathological. By a ``nice'' HNN-extension we mean an automorphism-induced HNN-extension; we define these HNN-extensions in Section \ref{sec:AutIndHNN}. By a ``nice'' group we mean a triangle group, defined below; such groups are extremely well studied \cite{Magnus1974Noneuclidean}.

The proof of Theorem \ref{thm:intro1} has two main innovations.
Firstly, we give a method for recognising malnormal subgroups of small cancellation groups
(Theorem \ref{thm:FreeMalnormMetric} says that if the presentation $\langle \mathbf{x}; \mathbf{r}, \mathbf{s}\rangle$ and the set $\mathbf{s}$ possess certain properties then $\langle\mathbf{s}\rangle$ is a malnormal subgroup of $G=\langle \mathbf{x}; \mathbf{r}\rangle$).
Secondly, we introduce and study the notion of ``malcharacteristic'' subgroups (which generalise malnormal subgroups). The key idea of this paper is that the existence of malcharacteristic subgroups in certain triangle groups implies Theorem \ref{thm:intro1}.

\p{Main theorems}
A \emph{triangle group} is a group with a presentation of the following form.
\[
T_{i, j, k}:=\langle a, b; a^i, b^j, (ab)^k\rangle
\]
If $i=j=k$ we shall write $T_i:=T_{i, i, i}$ for the corresponding \emph{equilateral triangle group}. Theorem \ref{thm:intro1} reveals a certain universal property possessed by equilateral triangle groups $T_{i}$ with $i\geq6$.

By a \emph{countable group presentation} we mean a presentation $\mathcal{P}=\langle \mathbf{x}; \mathbf{r}\rangle$ where $|\mathbf{x}|\leq|\mathbb{N}|$ (and hence $|\mathbf{r}|\leq|\mathbb{N}|$). All our presentations are countable so we shall often omit the adjective ``countable''.
For a group presentation $\mathcal{P}$ we write $\pi_1(\mathcal{P})$ for the group defined by $\mathcal{P}$. We often abuse notation and write $Q=\langle \mathbf{x}; \mathbf{r}\rangle$ for $Q$ a group (as we did above with $T_{i, j, k}$). Note that a group $Q$ is countable if and only if there exists a countable presentation $\mathcal{P}$ with $Q\cong\pi_1(\mathcal{P})$.

\begin{thmletter}
\label{thm:intro1}
Fix an equilateral triangle group $T_{i}$ with $i\geq6$.
For every countable group presentation $\mathcal{P}$ there exists an automorphism-induced HNN-extension $T_{\mathcal{P}}$ of $T_i$ such that $\out(T_{\mathcal{P}})\cong Q$ and $\aut(T_{\mathcal{P}})\cong T_{\mathcal{P}}\rtimes Q$, where $Q:=\pi_1(\mathcal{P})$.
\end{thmletter}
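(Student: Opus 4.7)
The plan is to construct $T_{\mathcal{P}}$ as an explicit automorphism-induced HNN-extension of $T_i$. Given $\mathcal{P}=\langle\mathbf{x};\mathbf{r}\rangle$, for each $x\in\mathbf{x}$ I attach a stable letter $t_x$ whose conjugation action on a carefully chosen subgroup $H\leq T_i$ realises a prescribed automorphism $\phi_x$ of $T_i$; the construction is arranged so that each $\phi_x$ extends to an outer automorphism $\psi_x$ of the resulting HNN-extension and that, for each relator $r\in\mathbf{r}$, the corresponding word $w_r(t_x)$ forces $\psi_r$ to become inner in $T_{\mathcal{P}}$. The subgroup $H$ is selected to be simultaneously malnormal in $T_i$ --- so that the HNN-extension inherits small-cancellation behaviour and Britton's Lemma applies --- and malcharacteristic, in the sense that every automorphism of $T_i$ carrying $H$ into a conjugate of $H$ is captured (up to inner automorphisms) by the prescribed family $\{\phi_x\}$. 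The existence of such an $H$ inside $T_i$ for $i\geq6$ is the key input from Theorem~\ref{thm:FreeMalnormMetric} and the malcharacteristic machinery; the hypothesis $i\geq6$ enters precisely because the standard triangle-group presentation of $T_i$ satisfies a $C'(1/6)$ small-cancellation condition in that range.

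For the easy direction I would exhibit a homomorphism $Q\to\out(T_{\mathcal{P}})$ sending each generator $x$ to the outer class $[\psi_x]$. The construction ensures the map is well-defined modulo relators (each $\psi_r$ being inner by design) and injective: a Britton normal-form argument shows that a composition $\psi_w$ is inner in $T_{\mathcal{P}}$ only when $w=1$ in $Q$, since otherwise $\psi_w$ would act non-trivially on the stable letters modulo the base group $T_i$.

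The hard direction, surjectivity, is the main obstacle. The plan is threefold: (i) show that $T_i$ is rigid inside $T_{\mathcal{P}}$, meaning every $\Phi\in\aut(T_{\mathcal{P}})$ sends $T_i$ to a conjugate of itself; (ii) compose with an inner automorphism to arrange that $T_i$ is setwise fixed, and then invoke the malcharacteristic hypothesis to identify $\Phi|_{T_i}$ with a specific $\phi_w$; (iii) analyse how $\Phi$ permutes the stable letters and show that, after adjusting by the $\psi_w$ supplied by step~(ii), the remaining freedom is absorbed by an element of $F(\mathbf{x})$ respecting $\mathbf{r}$, hence an element of $Q$. Step~(i) is where I expect the technical core to lie: recognising $T_i$ as a distinguished vertex group of the Bass--Serre tree of the HNN-extension requires both the malnormality of $H$ (detected via Theorem~\ref{thm:FreeMalnormMetric}) and the small-cancellation hypothesis $i\geq6$ to rule out alternative embeddings of $T_i$ into $T_{\mathcal{P}}$. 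Given (i)--(iii), the semidirect product structure $\aut(T_{\mathcal{P}})\cong T_{\mathcal{P}}\rtimes Q$ follows because the image of $Q$ in $\aut(T_{\mathcal{P}})$ meets $\inn(T_{\mathcal{P}})$ trivially by the injectivity established in the easy direction.
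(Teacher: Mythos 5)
Your proposal diverges from the paper's construction in a way that introduces a fatal gap, and the divergence is not merely cosmetic.

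First, your construction attaches one stable letter $t_x$ per generator $x\in\mathbf{x}$ of $\mathcal{P}$. The theorem asserts that $T_{\mathcal{P}}$ is an \emph{automorphism-induced HNN-extension} of $T_i$, which by definition (Section \ref{sec:AutIndHNN}) has a \emph{single} stable letter: $T_i\ast_{(K,\phi)}=\langle T_i, t; tkt^{-1}=\phi(k), k\in K\rangle$. A multiple HNN-extension would not satisfy the statement being proved, and none of the machinery of Theorem \ref{thm:description} or Lemma \ref{lem:corolofmaintheorem} would apply to it.

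Second, and more seriously, the mechanism by which you propose to realise $Q$ inside $\out(T_{\mathcal{P}})$ cannot work. You want each generator $x$ of $Q$ to correspond to an outer automorphism $\psi_x$ of $T_{\mathcal{P}}$ whose restriction to $T_i$ is a prescribed automorphism $\phi_x$ of $T_i$, with the relators of $\mathcal{P}$ forcing the corresponding compositions to become inner. But $\out(T_i)$ is finite --- by Lemma \ref{lem:transversal} it has order at most twelve --- so the assignment $x\mapsto\widehat{\phi_x}$ factors through a finite group, and for infinite (or large finite) $Q$ almost all of $Q$ must be realised by automorphisms of $T_{\mathcal{P}}$ that restrict to \emph{inner} automorphisms of $T_i$. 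Your injectivity argument ("$\psi_w$ is inner only when $w=1$ in $Q$") offers no mechanism for distinguishing these, and your surjectivity step (ii), which "identifies $\Phi|_{T_i}$ with a specific $\phi_w$", collapses for the same reason. The paper's mechanism is the opposite of yours: the copy of $Q$ in $\out(T_{\mathcal{P}})$ consists of automorphisms that are trivial on $\out(T_i)$ and act only by twisting the single stable letter $t\mapsto tm$ for $m\in N_{T_i}(K)$. Concretely, one fixes a malcharacteristic free subgroup $M_n\leq T_i$ of rank $n=|\mathbf{x}|+2$ and takes $K$ to be the normal subgroup of $M_n$ corresponding to $\langle\langle\mathbf{r},p,q\rangle\rangle$, so that $N_{T_i}(K)/K=M_n/K\cong Q$ (using Lemma \ref{lem:malcharNormaliser}); the short exact sequence of Theorem \ref{thm:description} then yields $\out(T_{\mathcal{P}})\cong N_{T_i}(K)/K\cong Q$ once the malcharacteristic property kills the quotient term $A_{(K,\phi)}\inn(H)/\inn(H)$ and the index-two issue is eliminated using the order-three automorphism $\phi:a\mapsto b$, $b\mapsto b^{-1}a^{-1}$. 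So the relators of $\mathcal{P}$ are encoded in the \emph{associated subgroup} $K$, not in relations among stable letters. A minor further point: the hypothesis $i\geq6$ is used for the $C'(1/4)$--$T(4)$ condition on $\{a^i,b^i,(ab)^i,x,y\}$ and for Lemmas \ref{lem:ExtendingZieschang}--\ref{lem:automorphicorbit2}, not for a $C'(1/6)$ condition on the triangle-group presentation itself.
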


We construct the groups $T_{\mathcal{P}}$ from Theorem \ref{thm:intro1} in Section \ref{sec:Construction}.
We discuss why we require $i\geq6$ in Theorem \ref{thm:intro1} after the proof of Lemma \ref{lem:malcharTriange}.

In Section \ref{sec:Construction} we study properties of the construction underlying Theorem \ref{thm:intro1}. This analysis leads to Theorems \ref{thm:intro2}, \ref{thm:intro3} and \ref{thm:intro4}.
Firstly, in Section \ref{sec:Construction} we note that countable group presentations form a category $\operatorname{Pres}$, with morphisms $\mathcal{P}_1\rightarrow \mathcal{P}_2$ corresponding to certain surjective homomorphisms of the groups $\pi_1(\mathcal{P}_1)$ and $\pi_1(\mathcal{P}_2)$.
Theorem \ref{thm:intro2} then says that the construction of Theorem \ref{thm:intro1} is functorial.

\begin{thmletter}
\label{thm:intro2}
The map defined by $\mathcal{P}\mapsto T_{\mathcal{P}}$ is a functor from the category of countable group presentations $\operatorname{Pres}$ to the category of groups $\operatorname{Grp}$.
\end{thmletter}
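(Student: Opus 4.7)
The plan is to extend the object map $\mathcal{P}\mapsto T_{\mathcal{P}}$ to morphisms in the only natural way and verify the two functor axioms. Before doing anything else, I would unpack the definition of $\operatorname{Pres}$ from Section \ref{sec:Construction}: a morphism $f\colon \mathcal{P}_1\to\mathcal{P}_2$ between $\mathcal{P}_i=\langle \mathbf{x}_i;\mathbf{r}_i\rangle$ should amount to an assignment $f\colon \mathbf{x}_1 \to F(\mathbf{x}_2)$ of generators to words such that the induced map on the free groups descends to a (surjective) homomorphism $\pi_1(\mathcal{P}_1)\to\pi_1(\mathcal{P}_2)$, i.e.\ every relator in $\mathbf{r}_1$ is sent to the identity of $\pi_1(\mathcal{P}_2)$. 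Since $T_{\mathcal{P}}$ is an automorphism-induced HNN-extension of $T_i$ with one stable letter $t_x$ per generator $x\in\mathbf{x}$ and with extra relations encoding the relators $\mathbf{r}$, every morphism $f$ of $\operatorname{Pres}$ has an evident candidate lift.

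Concretely, I would define $T_f\colon T_{\mathcal{P}_1}\to T_{\mathcal{P}_2}$ on generators by fixing the embedded copy of $T_i$ pointwise and sending each stable letter $t_x$ to the word $f(x)$ read as a product of the stable letters $t_y$ of $T_{\mathcal{P}_2}$. The key step is then to check that $T_f$ is well-defined as a group homomorphism; this means verifying that each defining relation of $T_{\mathcal{P}_1}$ holds in $T_{\mathcal{P}_2}$ after substitution. The relations split into three families: the relations of $T_i$, which are preserved since $T_i$ is fixed; the HNN-associated-subgroup relations of the form $t_x u t_x^{-1} = \phi_x(u)$ for $u$ in the associated subgroup and $\phi_x$ the prescribed automorphism of $T_i$; and the ``presentation relations'' that encode each $r\in\mathbf{r}_1$. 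For the second family, I would argue that because the construction is automorphism-induced and the automorphism attached to a product of stable letters is the corresponding product of the individual automorphisms, the relation $t_x u t_x^{-1}=\phi_x(u)$ is transported by $T_f$ to a relation $w\, u\, w^{-1}=\phi_{f(x)}(u)$ which holds in $T_{\mathcal{P}_2}$ by the same principle. For the third family, the image of each $r\in\mathbf{r}_1$ is the word $f(r)$ in the stable letters of $T_{\mathcal{P}_2}$; since $f$ descends to a homomorphism $\pi_1(\mathcal{P}_1)\to\pi_1(\mathcal{P}_2)$, the word $f(r)$ is a consequence of $\mathbf{r}_2$, and hence trivial in the stable-letter part of $T_{\mathcal{P}_2}$.

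Once $T_f$ is shown to be a well-defined homomorphism, the functor axioms are immediate. For the identity morphism, each $t_x$ is sent to itself and $T_i$ is fixed, so $T_{\operatorname{id}_{\mathcal{P}}}=\operatorname{id}_{T_{\mathcal{P}}}$. For composition, given $f\colon\mathcal{P}_1\to\mathcal{P}_2$ and $g\colon\mathcal{P}_2\to\mathcal{P}_3$, both $T_{g\circ f}$ and $T_g\circ T_f$ are defined as the identity on $T_i$ and as substitution-by-$g\circ f$ on the stable letters, so they agree on a generating set of $T_{\mathcal{P}_1}$ and hence globally.

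The main obstacle is the second family of relations: one has to be certain that the automorphism data attached to products of stable letters in an automorphism-induced HNN-extension compose in the expected way, so that the image under $T_f$ of an HNN-relation remains an HNN-relation in $T_{\mathcal{P}_2}$. Once Section \ref{sec:Construction} makes precise what ``automorphism-induced'' means and how the stable letters of $T_{\mathcal{P}}$ encode the generators $\mathbf{x}$, this reduces to a tidy computation in the group of automorphisms of $T_i$ preserving the distinguished subgroup. The rest of the proof is bookkeeping that the morphism set of $\operatorname{Pres}$ has been defined with exactly the right amount of rigidity to make $T_f$ well-defined.
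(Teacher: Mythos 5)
Your proposal is built on a misreading of both the construction and the category, and the argument as written would not go through. First, $T_{\mathcal{P}}$ does not have one stable letter per generator of $\mathcal{P}$: it is an HNN-extension $T_i\ast_{(K_{\mathcal{P}},\phi)}$ with a \emph{single} stable letter $t$. The generating set $\mathbf{x}$ of $\mathcal{P}$ is encoded not in stable letters but in the choice of a free malcharacteristic subgroup $M_{|\widehat{\mathbf{x}}|}\leq T_i$ whose basis plays the role of $\widehat{\mathbf{x}}$, and the relators are encoded in the associated subgroup $K_{\mathcal{P}}\leq M_{|\widehat{\mathbf{x}}|}$, which is the subgroup corresponding to the normal closure $\langle\langle\widehat{\mathbf{r}}\rangle\rangle$. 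Consequently your ``three families of relations'' analysis, and in particular the worry about how automorphism data attached to products of stable letters compose, addresses a structure that is not present. Second, the morphisms of $\operatorname{Pres}$ are far more rigid than the generator-assignments $f\colon\mathbf{x}_1\to F(\mathbf{x}_2)$ you posit: a morphism $\mathcal{P}_1\to\mathcal{P}_2$ exists only when the two presentations have the \emph{same} generating set and $\langle\langle\mathbf{r}_1\rangle\rangle\lneq\langle\langle\mathbf{r}_2\rangle\rangle$, i.e.\ $\mathcal{P}_2$ is a quotient presentation of $\mathcal{P}_1$. This rigidity is essential; the paper explicitly observes that the assignment cannot be made functorial on a category with arbitrary surjections (there exist non-isomorphic $Q_1,Q_2$ with $Q_1\twoheadrightarrow Q_2\twoheadrightarrow Q_1$), so the richer category you set up would defeat the theorem.

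With the correct definitions the proof is much shorter than what you outline. A morphism $\mathcal{P}_1\to\mathcal{P}_2$ gives $K_{\mathcal{P}_1}\lneq K_{\mathcal{P}_2}$ inside the same subgroup $M_{|\widehat{\mathbf{x}}|}$, and for automorphism-induced HNN-extensions enlarging the associated subgroup induces a surjection $T_i\ast_{(K_{\mathcal{P}_1},\phi)}\twoheadrightarrow T_i\ast_{(K_{\mathcal{P}_2},\phi)}$ obtained by imposing the additional relators $tUt^{-1}\phi(U)^{-1}$ for $U\in K_{\mathcal{P}_2}\setminus K_{\mathcal{P}_1}$ (this is Theorem \ref{thm:mainconstruction}.\ref{point:mainconstruction3}). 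Every such map is the identity on the generating set $T_i\cup\{t\}$, so preservation of identities and of composition is immediate. If you want to salvage your write-up, replace the stable-letter bookkeeping with this observation about nested associated subgroups.
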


Theorem \ref{thm:intro2} follows from Theorem \ref{thm:FunctorialProperties}. We note after the proof of Theorem \ref{thm:FunctorialProperties} that if an appropriate subcategory of $\operatorname{Pres}$ is chosen then this functor may be extended to a functor from a subcategory of $\operatorname{Grp}$ to $\operatorname{Grp}$.

The construction of Theorem \ref{thm:intro1} possesses the following residual property.

\begin{thmletter}
\label{thm:intro3}
Let $\mathbb{T}_{\operatorname{Fin}}$ be the class of groups $T_{\mathcal{Q}}$ where $\pi_1(\mathcal{Q})$ is finite. If $\pi_1(\mathcal{P})$ is residually finite then $T_{\mathcal{P}}$ is residually-$\mathbb{T}_{\operatorname{Fin}}$.
\end{thmletter}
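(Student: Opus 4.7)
The plan is to leverage the functoriality of Theorem~\ref{thm:intro2} together with the residual finiteness of $Q := \pi_1(\mathcal{P})$. For each surjection $Q \twoheadrightarrow F$ with $F$ finite, I would construct a presentation $\mathcal{P}_F$ of $F$ and a morphism $\mathcal{P} \to \mathcal{P}_F$ in $\operatorname{Pres}$ realising this surjection (for instance by taking $\mathcal{P}_F$ to have the generators of $\mathcal{P}$ together with additional relators whose normal closure is the kernel of $Q \to F$). By Theorem~\ref{thm:intro2} this yields a homomorphism $T_{\mathcal{P}} \to T_{\mathcal{P}_F}$ with codomain in $\mathbb{T}_{\operatorname{Fin}}$. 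To establish the residual property, I need to show that for any nontrivial $g \in T_{\mathcal{P}}$, there exists some such $F$ whose associated map does not kill $g$.

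I would split into cases according to the HNN-extension structure of $T_{\mathcal{P}}$ over the base $T_i$. If $g$ is conjugate into $T_i$, I would apply the functor to a morphism $\mathcal{P} \to \mathcal{P}_1$ corresponding to the trivial quotient $Q \twoheadrightarrow 1$; the induced homomorphism $T_{\mathcal{P}} \to T_{\mathcal{P}_1}$ should, by the explicit nature of the construction in Section~\ref{sec:Construction}, restrict to the identity on the distinguished base copy of $T_i$, so $g$ survives. If $g$ has positive Britton length, write it in reduced form $g = g_0 t_{x_1}^{\epsilon_1} g_1 \cdots t_{x_n}^{\epsilon_n} g_n$ where each stable letter $t_{x_i}$ corresponds to a generator $x_i \in \mathbf{x}$. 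Using residual finiteness of $Q$, I would pick a finite quotient $Q \twoheadrightarrow F$ in which the images of all the $x_i$ appearing in $g$ are nontrivial and pairwise distinct where required; the functorial map then sends $g$ to a word of the same syllable pattern in $T_{\mathcal{P}_F}$.

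The main obstacle is ensuring that this image remains Britton-reduced in $T_{\mathcal{P}_F}$. A priori, a factor $g_i \in T_i$ that lies outside the associated subgroup in $T_{\mathcal{P}}$ might land inside the corresponding associated subgroup in $T_{\mathcal{P}_F}$, introducing a pinch and permitting an unforeseen collapse. This is exactly the scenario the malcharacteristic machinery developed for Theorem~\ref{thm:intro1} is designed to control: because the associated subgroups are malcharacteristic inside $T_i$, the way they can be transported by the functorially induced homomorphisms between the $T_{\mathcal{Q}}$'s is tightly restricted, and in particular no spurious new pinches can appear. Once Britton-reducedness is preserved along $T_{\mathcal{P}} \to T_{\mathcal{P}_F}$, the normal form theorem for HNN-extensions forces $g$ to have nontrivial image, completing the verification of residually-$\mathbb{T}_{\operatorname{Fin}}$-ness.
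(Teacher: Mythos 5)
Your overall strategy is the paper's: use the functorial surjections $T_{\mathcal{P}}\twoheadrightarrow T_{\mathcal{P}_F}$ coming from finite quotients of $Q=\pi_1(\mathcal{P})$, and show that for each nontrivial $g$ some such map preserves a Britton-reduced form. However, there are two concrete problems with the execution. First, you have misread the structure of $T_{\mathcal{P}}$: it is an automorphism-induced HNN-extension $T_i\ast_{(K_{\mathcal{P}},\phi)}$ with a \emph{single} stable letter $t$, not one stable letter per generator of $\mathcal{P}$. The presentation $\mathcal{P}$ enters only through the associated subgroup $K_{\mathcal{P}}\trianglelefteq M_n$ with $M_n/K_{\mathcal{P}}\cong Q$. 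Consequently a reduced form is $g=h_0t^{\epsilon_1}h_1\cdots t^{\epsilon_m}h_m$ with $h_i\in T_i$, and the elements you must separate in a finite quotient of $Q$ are not ``the generators $x_i$ appearing in $g$'' (there are no such labels on the stable letters) but the classes $h_iK_{\mathcal{P}}\in M_n/K_{\mathcal{P}}\cong Q$ for those syllables $h_i$ that lie in $M_n$ (resp.\ $\phi(M_n)$) and sit in a potential pinch position $th_it^{-1}$ (resp.\ $t^{-1}h_it$). Choosing $F$ to separate the images of generators is neither necessary nor sufficient for what you need.

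Second, the step you flag as the main obstacle --- that no new pinches appear in $T_{\mathcal{P}_F}$ --- is exactly the point that has to be proved, and attributing it to ``the malcharacteristic machinery'' in general terms does not prove it. The actual argument is elementary once set up correctly: since $K_{\mathcal{P}}\leq K_{\mathcal{P}_F}\leq M_n$ and $N_{T_i}(K_{\mathcal{P}})=N_{T_i}(K_{\mathcal{P}_F})=M_n$ (this is where malnormality of $M_n$ is used, via Lemma \ref{lem:malcharNormaliser}), any syllable $h_i$ in pinch position either lies outside $M_n$, in which case it cannot lie in $K_{\mathcal{P}_F}\leq M_n$ and so cannot become a pinch, or lies in $M_n$, in which case the finite quotient was chosen precisely so that $h_iK_{\mathcal{P}}$ survives in $M_n/K_{\mathcal{P}_F}\cong F$, i.e.\ $h_i\notin K_{\mathcal{P}_F}$. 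With the elements to be separated identified correctly and this dichotomy spelled out, Britton's lemma finishes the proof as you indicate; as written, though, your choice of quotient targets the wrong elements and the no-new-pinches claim is unsupported.
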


Theorem \ref{thm:intro3} follows from Theorem \ref{thm:ResidualProperties}. In Section \ref{sec:RF} we prove the following corollary of Theorem \ref{thm:intro3}. Bumagin--Wise asked if every countable group $Q$ can be realised as the outer automorphism group of a finitely generated, residually finite group $G_Q$ \cite[Problem 1]{BumaginWise2005}. Corollary \ref{corol:intro3} gives a positive answer to this question of Bumagin--Wise for all finitely generated, residually finite groups $Q$ by taking $Q:=\pi_1(\mathcal{P})$ and $G_Q:=T_{\mathcal{P}}$.

\begin{corletter}
\label{corol:intro3}
If the presentation $\mathcal{P}$ has a finite generating set and the group $\pi_1(\mathcal{P})$ is residually finite then the group $T_{\mathcal{P}}$ is residually finite.
\end{corletter}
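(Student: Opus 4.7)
The plan is to combine Theorem~\ref{thm:intro3} with the transitivity principle for residual properties: if a group $G$ is residually in a class $\mathcal{C}$ each of whose members is residually finite, then $G$ itself is residually finite. Since Theorem~\ref{thm:intro3} already supplies that $T_{\mathcal{P}}$ is residually-$\mathbb{T}_{\operatorname{Fin}}$ whenever $\pi_1(\mathcal{P})$ is residually finite, the task reduces to showing that the relevant members of $\mathbb{T}_{\operatorname{Fin}}$ are themselves residually finite. The finite generating hypothesis on $\mathcal{P}$ enters at this point: inspection of the proof of Theorem~\ref{thm:intro3} (namely Theorem~\ref{thm:ResidualProperties}) should show that the witnesses $T_{\mathcal{Q}}$ may be chosen with $\mathcal{Q}$ a presentation on the same finite generating set as $\mathcal{P}$, obtained by quotienting $\pi_1(\mathcal{P})$ by a finite-index normal subgroup. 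Each such $\mathcal{Q}$ is then a finitely generated presentation of a finite group, and by the construction in Section~\ref{sec:Construction} the corresponding $T_{\mathcal{Q}}$ is a finitely generated automorphism-induced HNN-extension of $T_i$ with only finitely many stable letters.

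The key step is to establish that every such base-case $T_{\mathcal{Q}}$, with $\pi_1(\mathcal{Q})$ finite and $\mathcal{Q}$ finitely generated, is residually finite. The main ingredients I would use are: (i) the triangle group $T_i$ is itself residually finite for $i\geq 6$, being a Fuchsian group and therefore a finitely generated linear group; (ii) by Theorem~\ref{thm:intro1}, $\aut(T_{\mathcal{Q}})\cong T_{\mathcal{Q}}\rtimes Q$ with $Q=\pi_1(\mathcal{Q})$ finite, so $T_{\mathcal{Q}}$ is a finite-index normal subgroup of $\aut(T_{\mathcal{Q}})$; and (iii) the malnormality and malcharacteristic structure developed in Section~\ref{sec:Construction} controls the stable letters and associated subgroups of the HNN-extension tightly enough to push finite quotients of $T_i$ forward to finite quotients of $T_{\mathcal{Q}}$. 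Combining these, one expects to show that $T_{\mathcal{Q}}$ is virtually a combination of residually finite groups along malnormal (hence very well-behaved) edge subgroups, and therefore residually finite by classical HNN combination theorems or by hyperbolic/special techniques.

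The main obstacle will be this last step. Residual finiteness is not a formal consequence of being an HNN-extension of residually finite groups, so the proof must genuinely exploit both the finiteness of $Q$ and the malnormality of the associated subgroups. Concretely, for each nontrivial $g\in T_{\mathcal{Q}}$ one must produce an explicit finite quotient in which $g$ survives, and I expect this to be done by a careful analysis of Britton normal form against the very rigid structure imposed by an automorphism-induced HNN-extension with finite $Q$, rather than by invoking a generic black-box theorem. Once this base case is in hand, the corollary itself is one line: any nontrivial element of $T_{\mathcal{P}}$ survives to some $T_{\mathcal{Q}}$ by Theorem~\ref{thm:intro3}, and then to a finite quotient of $T_{\mathcal{Q}}$ by the base case, hence to a finite quotient of $T_{\mathcal{P}}$.
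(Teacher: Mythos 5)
Your reduction is exactly the paper's: by Theorem~\ref{thm:intro3} it suffices to treat the base case where $\pi_1(\mathcal{Q})$ is finite and $\mathcal{Q}$ has the same finite generating set, and you correctly observe that the finite-generation hypothesis is what makes these witnesses tractable. You also correctly diagnose that the whole difficulty is concentrated in the base case, since an HNN-extension of a residually finite group need not be residually finite. But at that point the proposal stops being a proof: you list plausible ingredients (linearity of $T_i$, the semidirect product structure of $\aut(T_{\mathcal{Q}})$, malnormality of $M_n$) and then say you ``expect'' the base case to follow from a Britton-normal-form analysis. None of these ingredients, as stated, closes the gap. In particular, the semidirect product $\aut(T_{\mathcal{Q}})\cong T_{\mathcal{Q}}\rtimes Q$ is of no help (residual finiteness of a group is not implied by it sitting with finite index inside some larger group of unknown residual properties --- and in any case that containment goes the wrong way for inheriting the property), and malnormality of the associated subgroup is not by itself a known sufficient condition for residual finiteness of an HNN-extension.

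The missing idea is \emph{subgroup separability}. Because $\mathbf{x}$ is finite and $\pi_1(\mathcal{Q})$ is finite, the associated subgroup $K_{\mathcal{Q}}$ has finite index in the free group $M_{|\widehat{\mathbf{x}}|}$ of finite rank, hence is finitely generated. Triangle groups are LERF (Scott), so the finitely generated subgroup $K_{\mathcal{Q}}$ is separable in $T_i$, and the standard Baumslag--Tretkoff criterion for residual finiteness of HNN-extensions with separable associated subgroups then applies (the automorphism-induced structure makes its compatibility hypotheses easy to check). So the paper does precisely what you said you wanted to avoid: it invokes a ``black-box'' combination theorem, and the real content is identifying that its hypotheses hold here --- which is where the finite generating set enters, exactly as you anticipated but did not pin down. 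Without the LERF/separability input (or a worked-out substitute), the proposal does not establish the base case and hence does not prove the corollary.
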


The construction of Theorem \ref{thm:intro1} has a certain flexibility. It allows a choice of certain subgroups $M_n$, and this choice may be made in such a way that we obtain easy examples or additional properties of the groups $T_{\mathcal{P}}$. The above theorems do not require these subgroups to be chosen in any particular way. However, the subgroups $M_n$ may be chosen in such a way that the following result holds, where for presentations $\mathcal{P}=\langle \mathbf{x}; \mathbf{r}\rangle$ and $\mathcal{Q}=\langle \mathbf{y}; \mathbf{s}\rangle$ we write $\mathcal{P}\ast\mathcal{Q}$ for the presentation $\langle\mathbf{x}, \mathbf{y}; \mathbf{r}, \mathbf{s}\rangle$ (so $\pi_1(\mathcal{P}\ast\mathcal{Q})\cong\pi_1(\mathcal{P})\ast\pi_1(\mathcal{Q})$).

\begin{thmletter}
\label{thm:intro4}
The subgroups $M_n$, $n\in\mathbb{N}$, in the proof of Theorem \ref{thm:intro1} may be chosen in such a way that for every presentation $\mathcal{P}$ with finite generating set and for every countable group presentation $\mathcal{Q}$ there exists a surjection $T_{\mathcal{P}}\twoheadrightarrow T_{\mathcal{P}\ast\mathcal{Q}}$.
\end{thmletter}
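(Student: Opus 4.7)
The plan is to revisit the construction of $T_{\mathcal{P}}$ in Section \ref{sec:Construction}, in which $T_{\mathcal{P}}$ is built as an automorphism-induced HNN-extension of $T_i$ with stable letters $\{t_n\}_{n\in\mathbb{N}}$ and associated subgroups $M_n\leq T_i$, and to arrange a single uniform sequence $(M_n,\phi_n)_{n\in\mathbb{N}}$ used identically in the construction of $T_{\mathcal{P}}$ for every presentation $\mathcal{P}$. Under such a uniform choice, for $\mathcal{P}=\langle \mathbf{x}; \mathbf{r}\rangle$ with finite $\mathbf{x}=\{x_1,\dots,x_k\}$, the generators $x_i$ are identified with the first $k$ stable letters $t_1,\dots,t_k$, and the only $\mathcal{P}$-dependent ingredient of the presentation of $T_{\mathcal{P}}$ is the finite set of additional relations $\mathbf{r}$ (involving only $t_1,\dots,t_k$). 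The remaining stable letters $t_{k+1},t_{k+2},\dots$ appear with only the generic HNN-relations $t_n^{-1}M_nt_n=\phi_n(M_n)$.

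Given a countable presentation $\mathcal{Q}=\langle \mathbf{y}; \mathbf{s}\rangle$ with $\mathbf{y}=\{y_1,y_2,\dots\}$, I identify $y_i$ with $t_{k+i}$ in the construction of $T_{\mathcal{P}\ast\mathcal{Q}}$. Then $T_{\mathcal{P}}$ and $T_{\mathcal{P}\ast\mathcal{Q}}$ are built on the same generating set (the generators of $T_i$ together with $\{t_n\}_{n\in\mathbb{N}}$), and every defining relation of $T_{\mathcal{P}}$ is among the defining relations of $T_{\mathcal{P}\ast\mathcal{Q}}$: the triangle-group relations and the HNN-relations coincide (by the uniform choice of $(M_n,\phi_n)$), and the relations of $\mathbf{r}$ are contained in $\mathbf{r}\cup\mathbf{s}$. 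The identity map on generators therefore descends to a well-defined, surjective homomorphism $T_{\mathcal{P}}\twoheadrightarrow T_{\mathcal{P}\ast\mathcal{Q}}$, which is the map claimed by the theorem.

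The main obstacle will be verifying that this uniform choice is compatible with Theorem \ref{thm:intro1}: for each finitely generated $\mathcal{P}$, the ``unused'' stable letters $t_{k+1},t_{k+2},\dots$ must not contribute any outer automorphism of $T_{\mathcal{P}}$ beyond those coming from $\pi_1(\mathcal{P})$. I expect this will require arranging the sequence $(M_n,\phi_n)$ so that the subgroups $M_n$ remain malcharacteristic and, in addition, are sufficiently distinguishable inside $T_i$ (for instance, pairwise non-conjugate, or carrying non-equivalent twisting automorphisms $\phi_n$) that the automorphism analysis underlying the proof of Theorem \ref{thm:intro1} still forces each $t_n$ to be fixed up to an inner automorphism rather than permuted with another $t_m$. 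The flexibility of the malcharacteristic framework introduced earlier in the paper is precisely what should allow such a uniform choice to be made without disrupting the $\out$-computation.
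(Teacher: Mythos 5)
There is a genuine gap here, and it stems from a misreading of the construction of $T_{\mathcal{P}}$. The group $T_{\mathcal{P}}$ is \emph{not} a multiple HNN-extension with a sequence of stable letters $t_1, t_2, \ldots$ identified with the generators of $\mathcal{P}$. It is an HNN-extension $T_i\ast_{(K_{\mathcal{P}},\phi)}$ with a \emph{single} stable letter $t$; the generators $\mathbf{x}$ of $\mathcal{P}$ correspond to a free basis of a malcharacteristic subgroup $M_{|\widehat{\mathbf{x}}|}\leq T_i$, the relators $\mathbf{r}$ determine the normal subgroup $K_{\mathcal{P}}\trianglelefteq M_{|\widehat{\mathbf{x}}|}$ associated to $\langle\langle\widehat{\mathbf{r}}\rangle\rangle$, and $\pi_1(\mathcal{P})$ is recovered as $N_H(K_{\mathcal{P}})/K_{\mathcal{P}}=M_{|\widehat{\mathbf{x}}|}/K_{\mathcal{P}}$ via Lemmas \ref{lem:malcharNormaliser} and \ref{lem:underlying}. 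Your picture, in which the relations of $\mathbf{r}$ are imposed directly on stable letters and the ``unused'' stable letters carry only generic HNN-relations, describes a different group; moreover, the $\out$-computation of the paper (Theorem \ref{thm:description} and Lemma \ref{lem:corolofmaintheorem}, imported from \cite{logan2015Bass}) applies only to a single-stable-letter automorphism-induced HNN-extension, so the ``main obstacle'' you flag at the end — controlling the automorphisms contributed by infinitely many extra stable letters — is not a technicality to be arranged but a fatal defect of the proposed set-up: there is no result in the paper that computes $\out$ of such a multiple HNN-extension.

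The actual mechanism is different and simpler. One chooses the subgroups $M_n$ to be \emph{nested}: take a single malcharacteristic subgroup of $T_i$ that is free on an infinite set $\{p,q,z_1,z_2,\ldots\}$ (this exists by Proposition \ref{lem:malcharTriangleArbRank}) and set $M_n:=\langle p,q,z_1,\ldots,z_n\rangle$. For $\mathcal{P}=\langle\mathbf{x};\mathbf{r}(\mathbf{x})\rangle$ with $\mathbf{x}$ finite, the associated subgroup is $K_{\mathcal{P}}=\langle\langle \mathbf{r}(\mathbf{z}_{|\mathbf{x}|}),p,q\rangle\rangle_{F(p,q,\mathbf{z}_{|\mathbf{x}|})}$, whereas the associated subgroup for $\mathcal{P}\ast\mathcal{Q}$ contains the normal closure of the same elements in the larger free group $F(p,q,\mathbf{z}_{|\mathbf{x}\sqcup\mathbf{y}|})$; hence $K_{\mathcal{P}}\leq K_{\mathcal{P}\ast\mathcal{Q}}$. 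The surjection $T_{\mathcal{P}}\twoheadrightarrow T_{\mathcal{P}\ast\mathcal{Q}}$ is then obtained by adding the relators $tUt^{-1}\phi(U)^{-1}$ for $U\in K_{\mathcal{P}\ast\mathcal{Q}}\setminus K_{\mathcal{P}}$ — the fact, noted in Section \ref{sec:AutIndHNN}, that enlarging the associated subgroup of an automorphism-induced HNN-extension induces a quotient map — together with Theorem \ref{thm:mainconstruction}.(\ref{point:mainconstruction3}) to handle the relators of $\mathcal{Q}$. No new analysis of $\out(T_{\mathcal{P}})$ is needed, because each $T_{\mathcal{P}}$ is still exactly the kind of single-stable-letter HNN-extension covered by Theorem \ref{thm:intro1}.
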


Theorem \ref{thm:intro4} follows from Theorem \ref{thm:FreeProduct}. The chosen subgroups $M_n$ are specified in Theorem \ref{thm:FreeProduct}.

\p{Examples}
We now give some simple examples of our construction. These examples illustrate the simplicity of the construction itself (examples of Bumagin--Wise's groups and of Minasyan's groups are much harder to construct) as well as the construction's functorial properties. Let $\mathcal{P}_{\infty}=\langle z; -\rangle$ and $\mathcal{P}_k=\langle z; z^k\rangle$ for $k\in\mathbb{N}$ (we will always assume $0\not\in\mathbb{N}$). Theorem \ref{thm:intro1} then outputs the following groups: Fix $i\geq6$. By $\phi$ we mean the automorphism $\phi: a\mapsto b, b\mapsto (ab)^{-1}$ of $T_i=\langle a, b; a^i, b^i, (ab)^i\rangle$ which has order three. By $x$ and $y$ we mean specific words over $a$ and $b$ given at the end of Section \ref{sec:Malchar} (see also Lemma \ref{lem:malcharTriange}).
\begin{align*}
T_{\mathcal{P}_{\infty}}&=\langle T_i, t; ty^{-j}xy^jt^{-1}=\phi(y^{-j}xy^j), \forall~j\in\mathbb{Z}\rangle\\
T_{\mathcal{P}_k}&=\langle T_i, t; ty^kt^{-1}=\phi(y^k), ty^{-j}xy^jt^{-1}=\phi(y^{-j}xy^j), \forall~0\leq j<k\rangle
\end{align*}
Note that the words $y^{-j}xy^j$, $j\in\mathbb{Z}$, generate the kernel of the map $F(x, y)\rightarrow\pi_1(\mathcal{P}_{\infty})$, $x\mapsto 1, y\mapsto z$, while the words $y^k$ and $y^{-j}xy^j$, $0\leq j<k$, generate the kernel of the analogous map $F(x, y)\rightarrow\pi_1(\mathcal{P}_k)$.
It is clear that the map $\mathbb{Z}/k\mathbb{Z}\mapsto T_{\mathcal{P}_k}$, $k\in\mathbb{N}\cup\{\infty\}$, is a functor from a subcategory of $\operatorname{Grp}$ to $\operatorname{Grp}$.

\p{Triangle groups}
In Theorems \ref{thm:GeneralTriangle1}--\ref{thm:GeneralTriangle4} we obtain results analogous to Theorems \ref{thm:intro1}, \ref{thm:intro2}, \ref{thm:intro3} and \ref{thm:intro4} where the base group $H$ is any triangle group $T_{i, j, k}$, $i, j, k\geq6$. The price we pay in allowing for more base groups is that the ``$\out(T_{\mathcal{P}})\cong Q$'' condition of Theorem \ref{thm:intro1} is weakened to ``$Q$ embeds with index one or two in $\out(T_{\mathcal{P}})$''.

The constructions of Theorem \ref{thm:intro1} and Theorem \ref{thm:GeneralTriangle1} are specific cases of Theorem \ref{thm:mainconstruction}. This more general theorem potentially allows for other base groups $H$ to be taken in Theorem \ref{thm:intro1}, and not just triangle groups.
However, it is a highly non-trivial task to apply Theorem \ref{thm:mainconstruction}, and so our proofs of Theorem \ref{thm:intro1} and Theorem \ref{thm:GeneralTriangle1} use properties of triangle groups throughout. We focus on triangle groups because they are well-studied groups with many nice properties. It would be extremely surprising if the universal properties of Theorem \ref{thm:intro1} and Theorem \ref{thm:GeneralTriangle1} were to disappear if we replaced the groups $H$ with less well-behaved groups (for example Thompson's groups $F$, $T$ or $V$). We therefore record a quasi-conjecture: If a group $H$ has a suitably rich subgroup structure (e.g. contains a non-abelian free subgroup) then the class $\mathcal{C}_H$ of HNN-extensions of $H$ possesses the Bumagin--Wise property.




\p{Malcharacteristic subgroups}
This paper introduces malcharacteristic subgroups, and most of the paper is devoted to their study.
Malnormal subgroups are central objects in geometric group theory, and malcharacteristic subgroups generalise malnormal subgroups in the same way that characteristic subgroups generalise normal subgroups.

We formally define malcharacteristic subgroups in Section \ref{sec:Malchar} (the definition of malnormal subgroups can be found there too). Our focus on malcharacteristic subgroups is because of Theorem \ref{thm:mainconstruction}, which can be interpreted as saying ``Theorem \ref{thm:intro1} holds if each group $T_i$, $i\geq6$, contains a malcharacteristic subgroup which is free of rank two''.

In Proposition \ref{lem:malcharFreeArbRank} we provide concrete examples of malcharacteristic subgroups in the free group $F(a, b)$. We use these to provide, in Proposition \ref{lem:malcharTriangleArbRank}, concrete examples of malcharacteristic subgroups in the triangle groups $T_{i, j, k}$ with $i, j, k\geq6$. Theorem \ref{thm:intro1} then follows from Theorem \ref{thm:mainconstruction}.


\p{Malnormal subgroups of small cancellation groups}
In order to obtain concrete examples of malcharacteristic subgroups we first need concrete examples of malnormal subgroups.
To find these we apply Theorem \ref{thm:FreeMalnormMetric}, stated below. Theorem \ref{thm:FreeMalnormMetric} gives a new method for recognising malnormal subgroups of small cancellation groups.
We refer the reader to Lyndon--Schupp's classic text for the definitions of the metric $C^{\prime}(1/6)$ and $C^{\prime}(1/4)-T(4)$ small cancellation conditions \cite[Section V]{L-S}.
%
%
A word $W$ in $F(\mathbf{x})$ is a \emph{proper power} if there exists another word $U\in F(\mathbf{x})$ and an integer $n>1$ such that $W=U^n$. For a group $G=\langle\mathbf{x}; \mathbf{r}\rangle$, Theorem \ref{thm:FreeMalnormMetric} links the malnormality of a subgroup $M$ generated by a set of words $\mathbf{s}$, so $M=\langle \mathbf{s}\rangle$, with properties of the presentation $\langle \mathbf{x}; \mathbf{r}, \mathbf{s}\rangle$. Note that Theorem \ref{thm:FreeMalnormMetric} allows for the explicit construction of malnormal subgroups of small cancellation groups.

\begin{thmletter}
\label{thm:FreeMalnormMetric}
Suppose that $\langle \mathbf{x};\mathbf{r}, \mathbf{s}\rangle$ is a $C^{\prime}(1/6)$ or $C^{\prime}(1/4)-T(4)$ small cancellation presentation, and that no element of the set $\mathbf{s}$ is a proper power. Then the subgroup $\langle \mathbf{s}\rangle$ of the small cancellation group $G=\langle \mathbf{x}; \mathbf{r}\rangle$ is malnormal and free with basis $\mathbf{s}$.
\end{thmletter}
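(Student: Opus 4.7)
The plan is as follows. For freeness, take a nontrivial reduced word $W=s_{i_1}^{\epsilon_1}\cdots s_{i_n}^{\epsilon_n}$ in $F(\mathbf{s})$ and aim to show $W\neq 1$ in $G$. The small cancellation hypothesis on $\mathbf{r}\cup\mathbf{s}$ bounds the free-reduction cancellation between consecutive factors $s_{i_k}^{\epsilon_k}s_{i_{k+1}}^{\epsilon_{k+1}}$ in $F(\mathbf{x})$ by strictly less than $\lambda$ times the length of each (for $\lambda=1/6$ or $1/4$), because a longer cancellation would exhibit a piece exceeding $\lambda\cdot|r|$ in the symmetrized relator set $(\mathbf{r}\cup\mathbf{s})^{*}$; the only other route to extra cancellation, namely $s_{i_{k+1}}^{\epsilon_{k+1}}=s_{i_k}^{-\epsilon_k}$, is ruled out by reducedness of $W$. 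Consequently the free reduction $\bar W\in F(\mathbf{x})$ is non-empty and decomposes canonically as $\bar W=u_1u_2\cdots u_n$, where $u_k$ is the surviving ``middle'' of $s_{i_k}^{\epsilon_k}$ of length $\geq(1-2\lambda)|s_{i_k}|$.

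Now suppose for contradiction $\bar W=1$ in $G$ and take a minimal van Kampen diagram $D$ over $\mathbf{r}$ with $\partial D=\bar W$. For each $k$, attach a $2$-cell $C_k$ with boundary label $s_{i_k}^{\epsilon_k}$ along the arc $u_k\subset\partial D$; consecutive cells $C_k,C_{k+1}$ share the cancellation arc $\beta_k=\alpha_{k+1}^{-1}$ and assemble into a disk $N$ (a ``necklace'' of $\mathbf{s}$-cells) with $\partial N=\bar W$. The resulting complex $\Sigma=D\cup_{\bar W}N$ is a spherical diagram over $\langle\mathbf{x};\mathbf{r},\mathbf{s}\rangle$. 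I claim $\Sigma$ is reduced: no $\mathbf{r}$-$\mathbf{r}$ reduction pair (by minimality of $D$), no $\mathbf{s}$-$\mathbf{s}$ pair (else $s_{i_{k+1}}^{\epsilon_{k+1}}=s_{i_k}^{-\epsilon_k}$, contradicting reducedness of $W$), and no $\mathbf{r}$-$\mathbf{s}$ pair after the harmless preprocessing of deleting from $\mathbf{s}$ any element cyclically conjugate to some $r^{\pm 1}$. A count using $C^{\prime}(\lambda)$ shows that every cell of $\Sigma$ has at least $7$ pieces on its boundary (for $C^{\prime}(1/6)$), respectively at least $5$ pieces (for $C^{\prime}(1/4)$-$T(4)$): indeed, the arc $u_k$ has length $\geq(1-2\lambda)|s_{i_k}|$ and each piece along it has length $<\lambda|s_{i_k}|$. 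Assigning each corner of a $d_f$-sided face the angle $\pi(1-2/d_f)$, combinatorial Gauss-Bonnet on $\Sigma$ gives $\sum_v(2\pi-\sigma_v)=2\pi\chi(S^{2})=4\pi$; but after fusing degree-$2$ vertices every vertex has degree $\geq 3$ (resp.\ $\geq 4$ by $T(4)$), so $\sigma_v\geq 3\cdot 5\pi/7>2\pi$ (resp.\ $\sigma_v\geq 4\cdot 3\pi/5>2\pi$) everywhere, forcing the total curvature to be $\leq 0$. This contradicts $4\pi$, so $\bar W\neq 1$ in $G$ and $\mathbf{s}$ freely generates $\langle\mathbf{s}\rangle$.

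For malnormality, suppose $g\in G\setminus\langle\mathbf{s}\rangle$ satisfies $gm_1g^{-1}=m_2$ for some $m_1,m_2\in\langle\mathbf{s}\rangle\setminus\{1\}$; write $m_i=S_i$ as reduced words in $F(\mathbf{s})$ and $g=\tilde g\in F(\mathbf{x})$, and set $W^{*}=\tilde gS_1\tilde g^{-1}S_2^{-1}$. If the free reduction of $W^{*}$ in $F(\mathbf{x})$ is non-empty, then $W^{*}=1$ in $G$ gives a minimal van Kampen diagram $D^{*}$ over $\mathbf{r}$; identifying the two $\tilde g$-arcs on $\partial D^{*}$ (which carry inverse labels) produces an annular $\mathbf{r}$-diagram with boundary circles labelled $S_1$ and $S_2^{-1}$, and capping both boundary circles with $\mathbf{s}$-necklaces as above produces a reduced spherical diagram $\Sigma^{*}$ over $\langle\mathbf{x};\mathbf{r},\mathbf{s}\rangle$ containing at least one $\mathbf{s}$-cell, to which the same Gauss-Bonnet argument applies. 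If instead $W^{*}$ freely reduces to the empty word in $F(\mathbf{x})$, then $\tilde g$ conjugates $S_1$ to $S_2$ already in the free group $F(\mathbf{x})$; I invoke an auxiliary root-closure lemma---that $\langle\mathbf{s}\rangle$ is closed under roots in $F(\mathbf{x})$, which follows from the observation that any periodicity of an element of $\langle\mathbf{s}\rangle$ in $F(\mathbf{x})$, under $C^{\prime}(\lambda)$ on $\mathbf{s}$ plus the no-proper-power hypothesis, must align with the $\mathbf{s}$-cell structure of $\bar S_1$ and hence exhibit the $F(\mathbf{x})$-root as a word in $\mathbf{s}$---to conclude that the $F(\mathbf{x})$-centralizer of $S_1$ lies in $F(\mathbf{s})$, so $\tilde g\in F(\mathbf{s})$, contradicting $g\notin\langle\mathbf{s}\rangle$.

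The principal obstacle is running the Gauss-Bonnet computation honestly on the mixed spherical diagram: one must verify reducedness at the $\mathbf{r}/\mathbf{s}$ interface and verify that proper-power relators in $\mathbf{r}$ (which $C^{\prime}(1/6)$ permits for exponent $\geq 7$) do not produce positive-curvature configurations that evade the angle-sum bound; this is where the assumption that $\mathbf{s}$ has no proper powers is crucial in Paragraph 3, since any proper-power $s_i$ would obstruct root-closure and hence malnormality via the centralizer of $s_i$. The secondary obstacle is establishing the root-closure lemma itself, which is a small-cancellation analysis of cyclic periods of elements of $\langle\mathbf{s}\rangle$ viewed as words in $F(\mathbf{x})$.
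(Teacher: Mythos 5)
Your core construction—the ``necklace'' of $\mathbf{s}$-cells capping the free reduction of a word over $\mathbf{s}$—is exactly the paper's diagram $\mathcal{D}_U$ (Lemma \ref{lem:diagramD_U}), and your overall strategy of reducing conjugacy in $G$ to conjugacy in $F(\mathbf{x})$ matches Theorems \ref{thm:FreeMalnormMetric6} and \ref{thm:FreeMalnormMetric4T4}. But there is a genuine gap at the free-group endgame. Once you have reduced to $\tilde g S_1\tilde g^{-1}=S_2$ in $F(\mathbf{x})$, you need \emph{malnormality of $\langle\mathbf{s}\rangle$ in the free group}, and your proposed route via a root-closure lemma and the centralizer of $S_1$ does not deliver it: $\tilde g$ centralizes $S_1$ only in the special case $S_2=S_1$, so ``the $F(\mathbf{x})$-centralizer of $S_1$ lies in $F(\mathbf{s})$'' says nothing about a $\tilde g$ conjugating $S_1$ to a \emph{different} element of $\langle\mathbf{s}\rangle$. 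Root-closure is strictly weaker than malnormality (e.g.\ $\langle ab, ba\rangle\leq F(a,b)$ is conjugated into itself by $a\notin\langle ab,ba\rangle$). The paper closes this step by citing Wise's theorem that a $c(5)$ small cancellation set with no proper powers generates a malnormal subgroup of the free group \cite[Theorems 2.11 \& 2.14]{wise2001residual}; this is where the no-proper-powers hypothesis is actually consumed, and it must be imported or reproved rather than replaced by a centralizer argument.

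A second, structural problem: your spherical Gauss--Bonnet argument for malnormality is set up to yield an unconditional contradiction whenever the capped annulus contains an $\mathbf{s}$-cell---but it always does, including when $g\in\langle\mathbf{s}\rangle$, where the conjugation genuinely holds. So the argument as stated ``proves'' too much. The correct conclusion must be that the \emph{annular part contains no $\mathbf{r}$-regions} (so the conjugation lifts to $F(\mathbf{x})$), and the curvature/reducedness hypotheses must be verified to fail exactly when the annulus degenerates and the two necklaces meet (those necklace--necklace arcs need not be pieces, and cancellable $\mathbf{s}$-$\mathbf{s}$ pairs across the former annulus are precisely what you must allow). Your dichotomy on whether the particular word $W^{*}=\tilde gS_1\tilde g^{-1}S_2^{-1}$ freely reduces to the empty word is also the wrong one: the theorem's content is that \emph{some} lift $W$ of $g$ conjugates freely, not the chosen $\tilde g$. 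The paper handles all of this through the annular structure theorems (Theorems \ref{thm:AnnStructureThick} and \ref{thm:AnnStructureThin}) together with the attaching-path analysis of Lemma \ref{lem:connectingpathI}, including the delicate ``thin annulus'' case where a region meets both boundaries; that case analysis is the real work and is missing from your outline. Your freeness argument, by contrast, is essentially sound and parallels Lemma \ref{lem:FreeSbgps}, though the paper proves the stronger statement that reduced words over $\mathbf{s}$ are cyclically Dehn-reduced, which it then needs as a hypothesis for the annular diagrams.
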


The assumption that no element of the set $\mathbf{s}$ is a proper power is necessary, as if $U^n\in\mathbf{s}$ with $n>1$ then $U\not\in\langle\mathbf{s}\rangle$ (by small cancellation) but $\langle U^n\rangle\leq\left(U^{-1}\langle\mathbf{s}\rangle U\cap\langle\mathbf{s}\rangle\right)$.

Theorem \ref{thm:FreeMalnormMetric} follows from two more general theorems, Theorems \ref{thm:FreeMalnormMetric6} and \ref{thm:FreeMalnormMetric4T4}. These two theorems give a new method for recognising, for $A$ and $B$ subgroups of a small cancellation group $G$, if $A^g\cap B= 1$ for all $g\in G$.
Also related is Lemma \ref{lem:FreeSbgps}, which gives a new method for recognising free subgroups of small cancellation groups.
Note that fibre products of Stalling's graphs \cite[Theorem 5.5]{stallings1983topology} solve for free groups the underlying decision problems of Theorem \ref{thm:FreeMalnormMetric} and of Theorems \ref{thm:FreeMalnormMetric6} and \ref{thm:FreeMalnormMetric4T4}.

\p{Outline of the paper}
In Section \ref{sec:AutIndHNN} we prove Lemma \ref{lem:corolofmaintheorem}, a result on automorphism-induced HNN-extensions which underpins the construction of Theorem \ref{thm:mainconstruction}.
In Section \ref{sec:Malchar} we define, and present basic results about, malcharacteristic subgroups.
In Section \ref{sec:Construction} we prove Theorem \ref{thm:mainconstruction}; this gives the construction underlying Theorem \ref{thm:intro1}. This section also contains the proofs of various properties of this construction relating to Theorems \ref{thm:intro2}, \ref{thm:intro3} and \ref{thm:intro4}.
In Section \ref{sec:MalcharF2} we give concrete examples of malcharacteristic subgroups of the free group $F(a, b)$.
In Section \ref{sec:MalnormAspher} we give concrete examples of malnormal subgroups of triangle groups $T_{i, j, k}$, $i, j, k\geq6$, and in particular we prove Theorem \ref{thm:FreeMalnormMetric}.
In Section \ref{sec:MalcharTriangle} we combine the results of the previous sections to give concrete examples of malcharacteristic subgroups of triangle groups $T_{i, j, k}$, $i, j, k\geq6$, and in particular we prove Theorems \ref{thm:intro1}, \ref{thm:intro2}, \ref{thm:intro3} and \ref{thm:intro4}.
In Section \ref{sec:RF} we apply Theorem \ref{thm:intro3} to the question of Bumagin--Wise regarding residually finite groups, and in particular we prove Corollary \ref{corol:intro3}.
In Section \ref{sec:Qns} we pose certain questions which arose in the writing of this paper.

\p{Acknowledgements} I would like to thank Stephen J. Pride and Tara Brendle for many helpful discussions about this paper. I would also like to thank Jim Howie for observing when the groups in Theorem \ref{thm:intro1} are finitely and infinitely presentable (see the discussion following the proof of Theorem \ref{thm:mainconstruction}), Peter Kropholler for ideas which led to Theorem \ref{thm:intro3}, and Henry Wilton for pointing out that triangle groups are LERF (see the proof of Corollary \ref{corol:intro3}). This research was supported by the EPSRC standard grant of Laura Ciobanu, EP/R035814/1.


\section{Automorphism-induced HNN-extensions}
\label{sec:AutIndHNN}

The main result of this section is Lemma \ref{lem:corolofmaintheorem}, which gives certain isomorphisms underpinning Theorem \ref{thm:intro1}.
We also define automorphism-induced HNN-extensions and explain why they are ``nice''.

\p{Automorphism-induced HNN-extensions}
Let $H$ be a group, $K\lneq H$ a proper subgroup and $\phi\in\aut(H)$ an automorphism.\footnote{If $H=K$ then $G=H\rtimes_{\phi}\mathbb{Z}$ is the mapping torus of $\phi$. Here the outer automorphism group is very different because $\phi$ extends to an inner automorphism.}
Then the \emph{automorphism-induced HNN-extension of $H$ with associated subgroup $K$ and with associated automorphism $\phi$} is the group with relative presentation
\[
H\ast_{(K, \phi)}=\langle H, t; tkt^{-1}=\phi(k), k\in K\rangle.
\]
The groups $T_{\mathcal{P}}$ in Theorem \ref{thm:intro1} are automorphism-induced HNN-extensions. The fact that they are automorphism-induced is crucial to our paper. For example, to prove Theorem \ref{thm:intro2} we apply the following fact particular to automorphism-induced HNN-extensions: if $K<K_0< H$ then $H\ast_{(K, \phi)}\twoheadrightarrow H\ast_{(K_0, \phi)}$.

Automorphism-induced HNN-extensions are ``nice'', in the sense that they are an easy class of groups to work with and possess nicer properties than general HNN-extensions.
The simplest examples of HNN-extensions are Baumslag--Solitar groups (HNN-extensions of the infinite cyclic group), and these also provide the standard examples of badly-behaved HNN-extensions. For example, Baumslag--Solitar groups can have non-finitely generated outer automorphism group \cite{collins1983automorphisms}. However, if a Baumslag--Solitar group is {automorphism-induced} then it has virtually cyclic outer automorphism group \cite{GHMR}.

As another example of the tractability of automorphism-induced HNN-extensions, we note that the notation $H\ast_{(K, \phi)}$ emphasises that the second associated subgroup $\phi(K)$ can, in practice, be ignored when studying these groups. This in demonstrated in Theorem \ref{thm:description} and Lemma \ref{lem:corolofmaintheorem}, where the descriptions of $\out(H\ast_{(K, \phi)})$ mention $K$ but not $\phi(K)$.

\p{Describing \boldmath{$\out(G)$}}
For $G=H\ast_{(K, \phi)}$, Theorem \ref{thm:description} now describes $\out(G)$ subject to certain conditions.
One of the conditions required by Theorem \ref{thm:description} is that the group $H$ has \emph{Serre's property FA}, that is, every action of $H$ on any tree has a global fixed point.
This is an extremely natural property (see, for example, \cite[Theorem 15]{trees}).
For $\delta\in\aut(H)$ we write $\widehat{\delta}$ to mean the outer automorphism $\delta\inn(H)\in\out(H)$. For $g\in H$ we write $\gamma_g$ for the inner automorphism of $H$ acting as conjugation by $g$, so $\gamma_g(h)=g^{-1}hg$ for all $h\in H$.
%
We define the subgroup $A_{(K, \phi)}$ of $\aut(H)$ as follows:
\[
A_{(K, \phi)}:=\{\delta\in\aut(H): \delta(K)=K, \:\exists\: a\in H \textnormal{ s.t. } \delta\phi(k)=\phi\delta\gamma_a(k) \:\forall\: k\in K\}
\]
Theorem \ref{thm:description} follows immediately from two results from our previous paper \cite[Theorems 4.7 \& 6.2]{logan2015Bass}.

\begin{theorem}\label{thm:description}
Let $G=H\ast_{(K, \phi)}$ be an automorphism-induced $HNN$-extension. Assume that:
\begin{enumerate}
\item\label{item:thmdescription1} $H$ has Serre's property FA;
\item\label{item:thmdescription2} there does not exist any $b\in H$ such that $\phi(K)\lneq \gamma_b(K)$ and there does not exist any $c\in H$ such that $\gamma_c(K)\lneq \phi(K)$; and
\item\label{item:thmdescription3} $Z(H)=1$.
\end{enumerate}
Then there exists a short exact sequence:
\[
1\rightarrow C_H(K)\rtimes\frac{N_H(K)}{K} \rightarrow \out^0(G)\rightarrow \frac{A_{(K, \phi)}\inn(H)}{\inn(H)}\rightarrow 1
\]
where either $\out^0(G)=\out(G)$ or there exists some $\delta\in\aut(H)$ and some $a\in H$ such that $\delta(K)=\phi(K)$, $\delta^2\gamma_a(K)=K$ and $\delta\phi^{-1}(k)=\phi\delta\gamma_a(k)$ for all $k\in K$, whence $\out^0(G)$ has index two in $\out(G)$.
\end{theorem}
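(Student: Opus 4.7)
My plan is to analyze $\out(G)$ via the Bass--Serre tree $T$ of the HNN-decomposition $G=H\ast_{(K,\phi)}$, exploiting the three hypotheses in turn.

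First I would define $\out^0(G)$ as the subgroup of $\out(G)$ whose representatives fix (up to conjugation) the vertex of $T$ stabilized by $H$. Hypothesis (1), Serre's property FA, forces every automorphism $\Phi\in\aut(G)$ to carry $H$ into a conjugate of $H$: indeed $\Phi(H)$ acts on $T$ without global fixed point only if it has a free subgroup acting freely, which FA precludes, so $\Phi(H)$ fixes some vertex of $T$ and is therefore conjugate to $H$. After post-composing with an inner automorphism we may assume $\Phi(H)=H$. Because $\Phi$ must permute the two edge groups incident to this vertex, namely $K$ and $\phi(K)$, we obtain a homomorphism $\out(G)\to\mathbb{Z}/2$ measuring whether $\Phi$ swaps them; $\out^0(G)$ is the kernel of this map. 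The index-two alternative in the statement comes from spelling out precisely when the swap occurs: an element $\delta\in\aut(H)$ together with a sending of $t\mapsto at^{-1}$ extends to an automorphism of $G$ precisely when $\delta(K)=\phi(K)$ with the two compatibility relations involving $\gamma_a$ and $\phi^{-1}$, which is exactly the condition listed.

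Next I would construct the restriction map $\out^0(G)\to\out(H)$, $\widehat{\Phi}\mapsto\widehat{\Phi|_H}$, which is well-defined by hypothesis (3): $Z(H)=1$ ensures that the inner-automorphism ambiguity inside $G$ descends to that of $H$. For the image, suppose $\delta\in\aut(H)$ extends to $\Phi\in\aut(G)$ with $\Phi(t)=a\,t$ for some $a\in H$. The HNN relation $tkt^{-1}=\phi(k)$ forces $\delta(K)$ to be a conjugate of $K$ and produces the compatibility $\delta\phi(k)=\gamma_a\phi\delta(k)$ for all $k\in K$. Hypothesis (2), which rules out strict inclusions among conjugates of $K$ and $\phi(K)$, lets me compose with a suitable inner automorphism to arrange $\delta(K)=K$ exactly. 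Rewriting the compatibility in the form appearing in the definition of $A_{(K,\phi)}$ identifies the image of restriction with $A_{(K,\phi)}\inn(H)/\inn(H)$.

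Finally, for the kernel I would take a representative that restricts to the identity on $H$. Such a $\Phi$ is determined by $\Phi(t)\in Ht$, say $\Phi(t)=ht$, and preserving the HNN relation forces $h\phi(k)h^{-1}=\phi(k)$ for all $k\in K$, i.e., $h\in C_H(\phi(K))$. Accounting for which choices of $h$ yield inner automorphisms of $G$ (conjugation by elements of $K$, acting on $t$ through $\phi$) and for the residual normalizer action, the kernel assembles into the semidirect product $C_H(K)\rtimes N_H(K)/K$, using $Z(H)=1$ once more to identify $H$ with $\inn(H)$. The main obstacle I anticipate is bookkeeping the semidirect-product structure correctly: one must carefully track how conjugation by elements of $N_H(K)$ acts on $C_H(K)$ and verify that the various inner-automorphism quotients line up so that the three-term sequence is exact at $\out^0(G)$. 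Since the author notes this statement is an immediate corollary of two prior results, the bulk of the work is already packaged in \cite{logan2015Bass}, and my task would be to invoke those descriptions of the kernel and image and verify compatibility with the present hypotheses.
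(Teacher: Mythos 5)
The paper does not give a self-contained argument here: its entire proof is the sentence preceding the statement, namely that the theorem ``follows immediately from'' Theorems 4.7 and 6.2 of \cite{logan2015Bass}. Your closing plan --- invoke those two results and check compatibility of hypotheses --- is therefore exactly what the paper does, and to that extent your proposal matches the paper's route.

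However, the Bass--Serre sketch you offer in place of those citations has several concrete defects and should not be mistaken for a proof. First, your justification of the fixed-vertex step is false as stated: a group can act on a tree with no global fixed point and with no free subgroup acting freely (e.g.\ $\mathbb{Z}$ acting on a line by translation); the correct argument is simply that $\Phi(H)\cong H$ inherits property FA and hence fixes a vertex by definition. Second, fixing a vertex only gives $\Phi(H)\leq gHg^{-1}$; upgrading this containment to equality is not automatic for HNN-extensions (ascending-type pathologies are the obstruction), and this is one of the places hypothesis (\ref{item:thmdescription2}) must be deployed, whereas your sketch invokes (\ref{item:thmdescription2}) only later to normalise $\delta(K)=K$. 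Third, in the kernel computation you assume $\Phi(t)\in Ht$; a priori the relation only forces $\Phi(t)\in C_G(\phi(K))\cdot t$, and showing that this centraliser lies in $H$ rather than involving stable letters is again part of the analysis packaged in \cite{logan2015Bass}. Finally, the $N_H(K)/K$ factor of the kernel and the exactness of the three-term sequence are asserted rather than derived --- your own caveat about ``bookkeeping'' is precisely where the mathematical content of the cited theorems lies. None of this affects the validity of the paper's proof-by-citation, but as a standalone argument the sketch is incomplete.
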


The assumption that $Z(H)=1$ means that the short exact sequence here is easily-digestible. A more extensive description of this sequence in our previous paper requires no such assumption \cite[Theorem 5.6]{logan2015Bass}.

Lemma \ref{lem:corolofmaintheorem} now refines the description of $\out(G)$ given by Theorem \ref{thm:description}.
We shall often use $\out^0(G)$ to mean an index-one or-two subgroup of $\out(G)$. Then by $\aut^0(G)$ we shall mean the full pre-image of $\out^0(G)$ in $\aut(G)$ under the natural map (see Lemmas \ref{lem:corolofmaintheorem} and \ref{lem:underlying}, and Theorem \ref{thm:mainconstruction}).

\begin{lemma}
\label{lem:corolofmaintheorem}
Suppose $H$, $K$ and $\phi$ are such that the following hold:
\begin{enumerate}
\item\label{corolcondition1} $H$ has Serre's property FA;
\item\label{corolcondition2} $C_H(K)$ is trivial;
\item\label{corolcondition3} $\delta(K)\cap K=1$ for all automorphisms $\delta\not\in \operatorname{Inn}(H)$; and
\item\label{corolcondition4} $\phi\not\in\operatorname{Inn}(H)$.
\setcounter{enumerateCounter}{\value{enumi}}
\end{enumerate}
Let $G=H\ast_{(K, \phi)}$. Then there exists an index-one or-two subgroup $\out^0(G)$ of $\out(G)$ such that $\out^0(G)\cong N_H(K)/K$.
Moreover, $\aut^0(G)= \inn(G)\rtimes\out^0(G)$, and so $\aut^0(G)\cong G\rtimes N_H(K)/K$. Suppose, in addition, that the following holds:
\begin{enumerate}
\setcounter{enumi}{\value{enumerateCounter}}
\item\label{corolcondition5} $\widehat{\phi}$ has odd or infinite order in $\out(H)$.
\end{enumerate}
Then $\out^0(G)=\out(G)$ and $\aut^0(G)=\aut(G)$.
\end{lemma}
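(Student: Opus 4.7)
The plan is to apply Theorem \ref{thm:description} to $G = H\ast_{(K,\phi)}$ and simplify the resulting short exact sequence using the strengthened hypotheses. First I would verify the three hypotheses of Theorem \ref{thm:description}: (1) is immediate, and (3) $Z(H) = 1$ follows from condition (2) since $Z(H) \leq C_H(K) = 1$. For (2), I would first observe that conditions (2) and (4) jointly force $K \neq 1$ (else $C_H(K) = H$ would force $H = 1$ and then $\phi = \mathrm{id} \in \inn(H)$). Then, if $\phi(K) \lneq \gamma_b(K)$, the automorphism $\delta := \gamma_{b^{-1}}\phi$ would satisfy $\delta(K) \subsetneq K$, so $\delta(K) \cap K = \delta(K) \neq 1$; condition (3) forces $\delta \in \inn(H)$, whence $\phi \in \inn(H)$, contradicting (4). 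The reverse containment is symmetric. The left term $C_H(K) \rtimes N_H(K)/K$ of the resulting sequence collapses to $N_H(K)/K$ by (2), while the right term $A_{(K,\phi)}\inn(H)/\inn(H)$ is trivial: any $\delta \in A_{(K,\phi)}$ satisfies $\delta(K) = K$, so if $\delta \notin \inn(H)$ then (3) gives $K = \delta(K) \cap K = 1$, contradicting $K \neq 1$. Hence $\out^0(G) \cong N_H(K)/K$.

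For the semidirect product statement I would first establish $Z(G) = 1$: since $H$ has property FA, it fixes a unique vertex $v$ of the Bass--Serre tree of $G$, and any $z \in Z(G)$ translates $v$ to another $H$-fixed vertex and hence fixes $v$, so $Z(G) \leq H$ and therefore $Z(G) \leq Z(H) \leq C_H(K) = 1$, giving $\inn(G) \cong G$. To realise the splitting of $1 \to \inn(G) \to \aut^0(G) \to \out^0(G) \to 1$, I would construct, for each $n \in N_H(K)$, an automorphism $\tau_n \in \aut(G)$ defined by $\tau_n(h) := nhn^{-1}$ on $H$ and $\tau_n(t) := n\phi(n)^{-1}t$. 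A direct check using $n \in N_H(K)$ and $C_H(\phi(K)) = \phi(C_H(K)) = 1$ confirms $\tau_n$ preserves the HNN relation and that $n \mapsto \tau_n$ is a homomorphism. Moreover, for $n \in K$ the HNN relation $tn^{-1} = \phi(n)^{-1}t$ yields $\tau_n(t) = ntn^{-1}$, so $\tau_n = \gamma_{n^{-1}}^G \in \inn(G)$ exactly when $n \in K$. The family $\{\tau_n\}$ thus realises the lift of $N_H(K)/K$ into $\aut^0(G)$ required for the identification $\aut^0(G) \cong \inn(G) \rtimes \out^0(G) \cong G \rtimes N_H(K)/K$.

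Finally, for the last assertion concerning condition (5): in the nontrivial alternative of Theorem \ref{thm:description} there exists $\delta \in \aut(H)$ with $\delta(K) = \phi(K)$ and $\delta^2\gamma_a(K) = K$. Arguing as in the first paragraph, the stabilizer of the conjugacy class $[K]$ in $\out(H)$ is trivial: any $\sigma \in \aut(H)$ with $\sigma(K)$ conjugate to $K$ can be precomposed with a suitable inner automorphism $\gamma_h$ to fix $K$ setwise, and then (3) forces the composite --- hence $\sigma$ --- to be inner. From $\delta(K) = \phi(K)$ I deduce $\widehat{\phi^{-1}\delta} = 1$, so $\widehat\delta = \widehat\phi$ in $\out(H)$; from $\delta^2(K) = \delta^2(a) K \delta^2(a)^{-1}$ I deduce $\widehat{\delta^2} = 1$. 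Combining, $\widehat\phi^2 = 1$, so $\widehat\phi$ has order dividing two in $\out(H)$, which is exactly two by (4), contradicting (5). The index-two case thus cannot occur, giving $\out^0(G) = \out(G)$ and $\aut^0(G) = \aut(G)$. The main technical obstacle I foresee is the construction and verification of the splitting via $\tau_n$: the formulas are natural, but confirming they extend to automorphisms of $G$ and descend correctly to the quotient demands careful manipulation of the HNN-relations.
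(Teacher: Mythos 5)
Your proposal is correct and follows essentially the same route as the paper: both reduce to Theorem \ref{thm:description}, verify its hypotheses from conditions (\ref{corolcondition2})--(\ref{corolcondition4}) (using $K\neq1$), show that the quotient term $A_{(K,\phi)}\inn(H)/\inn(H)$ vanishes so that $\out^0(G)\cong N_H(K)/K$, and rule out the index-two case under condition (\ref{corolcondition5}) by deducing $\widehat{\delta}=\widehat{\phi}$ and $\widehat{\delta^2}=1$, whence $\widehat{\phi}^2=1$. The only divergence is the splitting $\aut^0(G)=\inn(G)\rtimes\out^0(G)$: you build it explicitly via the lifts $\tau_n$ (which do preserve the HNN relation and descend correctly, so your flagged ``obstacle'' is surmountable), whereas the paper simply cites \cite[Lemma 6.4]{logan2015Bass}, so your extra work there fills in a citation rather than constituting a genuinely different argument.
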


\begin{proof}
The three conditions of Theorem \ref{thm:description} hold here, with the second condition following from Conditions (\ref{corolcondition3}) and (\ref{corolcondition4}) here as Conditions (\ref{corolcondition3}) and (\ref{corolcondition4}) imply that $\phi(K)\cap \gamma_h(K)=1$ for all $h\in H$.
Hence, the description of $\out^0(G)$ given by Theorem \ref{thm:description} holds here. It is therefore sufficient to prove that $A_{(K, \phi)}\leq\inn(H)$, that $\aut^0(G)=\inn(G)\rtimes\out^0(G)$, and that if Condition (\ref{corolcondition5}) holds then $\out^0(G)=\out(G)$.

Condition (\ref{corolcondition3}) implies that $\delta(K)\neq K$ for all $\delta\not\in\inn(H)$, and so $A_{(K, \phi)}\leq\operatorname{Inn}(G)$ as required.

That $\aut^0(G)= \inn(G)\rtimes\out^0(G)$ follows from the fact that $A_{(K, \phi)}\leq\operatorname{Inn}(H)$ \cite[Lemma 6.4]{logan2015Bass}.

Suppose that Condition (\ref{corolcondition5}) holds and that $\out^0(G)\neq\out(G)$. Then there exists $\delta\in\operatorname{Aut}(H)$ and $a\in H$ such that $\delta\phi^{-1}(K)=K=\delta^2\gamma_a(K)$.
Now, $\delta\phi^{-1}(K)=K$ implies that $\widehat{\delta}=\widehat{\phi}$, by (\ref{corolcondition3}). Therefore, $\delta^2\not\in\inn(G)$, by (\ref{corolcondition4}) and (\ref{corolcondition5}).
On the other hand, $\delta^2\gamma_a(K)=K$ implies that $\delta^2\in\inn(H)$, by (\ref{corolcondition3}), a contradiction. Hence, $\out^0(G)=\out(G)$ as required.
\end{proof}


\section{Malcharacteristic subgroups}
\label{sec:Malchar}
In this section we define malcharacteristic subgroups, and present basic results about them. We also state a subgroup $M$ of $T_{i, j, k}$ which is malcharacteristic and free of rank two.
Theorem \ref{thm:mainconstruction} reduces the proof of Theorem \ref{thm:intro1} to proving that this subgroup $M$ is indeed malcharacteristic and free of rank two; the purpose of Sections \ref{sec:MalcharF2}--\ref{sec:MalcharTriangle} is to prove these two properties of $M$.

\p{Definition of malcharacteristic subgroups}
A malnormal subgroup intersects its conjugates trivially apart from in the obvious place in the obvious way. More formally, a subgroup $M\leq H$ is \emph{malnormal} in $H$ if the following implication holds, where $h^g:=g^{-1}hg$ and $M^g:=\{h^g; h\in M\}$.
\[
M^g\cap M\neq1\Rightarrow g\in M
\]
Similarly, a {malcharacteristic subgroup} intersects its automorphic orbit trivially apart from in the obvious place in the obvious way.
More formally, a subgroup $M\leq H$ is \emph{malcharacteristic} in $H$ if for all $\delta\in\aut(H)$ the following implication holds, where $\inn(M):=\{\gamma_h; h\in M\}$.
\[
\delta(M)\cap M\neq1\Rightarrow\delta\in\operatorname{Inn}(M)
\]
Malcharacteristic subgroups may also be characterised as follows. It is characterisation (\ref{list:thirdCharacterisation}) of the following lemma which we apply in this paper (see Lemmas \ref{lem:malnormal} and \ref{lem:malcharTriange}, or the proof of Lemma \ref{lem:malcharFree}).

\begin{lemma}
\label{lem:MalcharCharacterisation}
Let $M$ be a subgroup of $H$. The following are equivalent.
\begin{enumerate}
\item
$M$ is malcharacteristic in $H$.
\item
$\inn(M)$ is malnormal in $\inn(H)$.
\item\label{list:thirdCharacterisation}
$M$ is malnormal in $H$ and the following implication holds:
\[\delta(M)\cap M\neq1\Rightarrow\delta\in\operatorname{Inn}(H).\]
\end{enumerate}
\end{lemma}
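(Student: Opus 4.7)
The three conditions are related by direct translations using the identity $\delta\gamma_m\delta^{-1}=\gamma_{\delta(m)}$; in particular $\gamma_g\gamma_m\gamma_g^{-1}=\gamma_{g^{-1}mg}$. The plan is to argue in a cycle $(1)\Rightarrow(3)\Rightarrow(2)\Rightarrow(1)$, each step being a short unwinding of definitions.

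For $(1)\Rightarrow(3)$, the second implication in $(3)$ is immediate from $(1)$ since $\inn(M)\subseteq\inn(H)$. To obtain malnormality of $M$ in $H$, I would apply $(1)$ to the inner automorphism $\delta=\gamma_g$: if $M^g\cap M\neq 1$ then $\gamma_g(M)\cap M\neq 1$, so $(1)$ gives $\gamma_g\in\inn(M)$, which translates to $g\in M$.

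For $(3)\Rightarrow(2)$, suppose $\gamma_g\inn(M)\gamma_g^{-1}\cap\inn(M)$ contains a non-identity element, say $\gamma_{m'}=\gamma_g\gamma_m\gamma_g^{-1}=\gamma_{g^{-1}mg}$ for $m,m'\in M$. Then $g^{-1}mg$ and $m'$ represent the same non-trivial inner automorphism, so in particular $M^g\cap M\neq 1$. The malnormality clause of $(3)$ then gives $g\in M$, hence $\gamma_g\in\inn(M)$.

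For $(2)\Rightarrow(1)$, suppose $\delta\in\aut(H)$ satisfies $\delta(M)\cap M\neq 1$, and pick $m\in M$ with $\delta(m)\in M\setminus\{1\}$. Then $\delta\gamma_m\delta^{-1}=\gamma_{\delta(m)}$ lies in $\inn(\delta(M))\cap\inn(M)$, so $\delta\inn(M)\delta^{-1}\cap\inn(M)\neq 1$. Since $\inn(H)$ is normal in $\aut(H)$, the automorphism $\delta$ conjugates $\inn(M)\leq\inn(H)$ to a subgroup of $\inn(H)$, and the malnormality of $\inn(M)$ in $\inn(H)$ applied to the non-trivial intersection forces $\delta$ to act on $\inn(H)$ as an inner automorphism coming from $\inn(M)$, giving $\delta\in\inn(M)$ as required.

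The step I expect to require the most care is $(2)\Rightarrow(1)$, since $\delta$ is a priori an arbitrary (possibly outer) automorphism of $H$ while $(2)$ is phrased entirely inside $\inn(H)$; the bridge is the elementary identity $\delta\gamma_m\delta^{-1}=\gamma_{\delta(m)}$, which reduces the automorphism-level intersection $\delta(M)\cap M\neq 1$ to the inner-automorphism-level intersection that $(2)$ knows how to handle.
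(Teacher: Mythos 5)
The paper declares this proof routine and omits it, so your attempt can only be judged on its own terms, and it has a genuine gap at exactly the step you flagged as delicate. In $(2)\Rightarrow(1)$ you produce a non-trivial element of $\delta\inn(M)\delta^{-1}\cap\inn(M)$ and then invoke ``the malnormality of $\inn(M)$ in $\inn(H)$''. But malnormality of $\inn(M)$ in $\inn(H)$ only constrains conjugation by elements \emph{of} $\inn(H)$; when $\delta$ is outer, the conjugation $\inn(M)\mapsto\delta\inn(M)\delta^{-1}$ happens inside $\aut(H)$, and condition (2), read literally, says nothing about it. Indeed the implication is false as literally stated: take $H=F(a,b)$ and $M=\langle a\rangle$. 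Then $M$ is malnormal in $H$, so $\inn(M)$ is malnormal in $\inn(H)\cong H$, yet the transvection $a\mapsto a$, $b\mapsto ba$ satisfies $\delta(M)\cap M=M\neq1$ without lying in $\inn(M)$, so $M$ is not malcharacteristic. The only reading under which $(2)\Leftrightarrow(1)$ holds is ``$\inn(M)$ is malnormal in $\aut(H)$''; with that reading your identity $\delta\gamma_m\delta^{-1}=\gamma_{\delta(m)}$ does close the argument, since $\delta\inn(M)\delta^{-1}\cap\inn(M)\neq1$ then directly yields $\delta\in\inn(M)$. You should have flagged and repaired the statement rather than papered over the mismatch.

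A second, smaller issue runs through the other steps: you pass between elements of $H$ and the inner automorphisms they induce as if $g\mapsto\gamma_g$ were injective. From $\gamma_g\in\inn(M)$ you may only conclude $g\in Z(H)M$, and from $\gamma_{g^{-1}mg}=\gamma_{m'}$ only that $g^{-1}mg\in Z(H)m'$; so the deductions ``$g\in M$'' in $(1)\Rightarrow(3)$ and ``$M^g\cap M\neq1$'' in $(3)\Rightarrow(2)$ need $Z(H)=1$ (or $Z(H)\leq M$). This is harmless for the paper's applications, where $H$ is a centreless triangle group or free group, but the lemma is stated for arbitrary $H$ and $M$, and without such a hypothesis $(1)\Rightarrow(3)$ genuinely fails: for $H=S_3\times C_2$ the subgroup $M=\langle((12),1)\rangle$ is malcharacteristic (every automorphism fixing $((12),1)$ is conjugation by an element of $\langle(12)\rangle\times C_2$, hence lies in $\inn(M)$, while the outer automorphisms move $((12),1)$ off itself), yet it is not malnormal, since the central involution normalises $M$ without lying in it. Either impose $Z(H)=1$ explicitly or conduct the whole comparison at the level of inner automorphisms.
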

The proof of Lemma \ref{lem:MalcharCharacterisation} is routine and so omitted.

\p{Basic results}
We now make two observations which are applied in, respectively, Lemma \ref{lem:malcharArbRank} and Section \ref{sec:Construction}. The proofs of these observations are routine and so omitted.
Lemma \ref{lem:malnormLEQmalchar} is dual to the well-known result that if $N$ is normal in $H$ and $C$ is characteristic in $N$ then $C$ is normal in $H$.

\begin{lemma}
\label{lem:malnormLEQmalchar}
If $M$ is malcharacteristic in $H$ and $A$ is malnormal in $M$ then $A$ is malcharacteristic in $H$.
\end{lemma}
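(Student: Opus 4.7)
The plan is to chase the definitions in two steps: first use the malcharacteristicity of $M$ to pin $\delta$ down as conjugation by an element of $M$, then use the malnormality of $A$ inside $M$ to refine this to conjugation by an element of $A$.

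More concretely, let $\delta \in \aut(H)$ with $\delta(A) \cap A \neq 1$. Since $A \leq M$, we have $\delta(A) \cap A \leq \delta(M) \cap M$, so $\delta(M) \cap M \neq 1$. The malcharacteristicity of $M$ in $H$ therefore supplies some $m \in M$ with $\delta = \gamma_m$. Then $\delta(A) = m^{-1}Am = A^m$, and the assumption $\delta(A)\cap A \neq 1$ becomes $A^m \cap A \neq 1$. Since $A$ is malnormal in $M$ and $m \in M$, this forces $m \in A$, whence $\delta = \gamma_m \in \inn(A)$. That establishes the implication $\delta(M) \cap M \neq 1 \Rightarrow \delta \in \inn(A)$ which is precisely the definition of $A$ being malcharacteristic in $H$.

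The only potential pitfall, which is more a matter of bookkeeping than a genuine obstacle, is to keep straight that $\inn(M)$ and $\inn(A)$ here denote subgroups of $\aut(H)$ (namely $\{\gamma_h : h \in M\}$ and $\{\gamma_h : h \in A\}$ respectively, per the definition given just above the lemma), rather than the inner automorphism groups of $M$ and $A$ in the usual sense. Once this is understood, the argument is essentially a one-line chase of definitions, which is consistent with the paper's assertion that the proof is routine.
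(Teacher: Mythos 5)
Your proof is correct and is precisely the routine definition-chase the paper omits: malcharacteristicity of $M$ pins $\delta$ down to $\gamma_m$ with $m\in M$, and malnormality of $A$ in $M$ then forces $m\in A$, so $\delta\in\inn(A)$. The only blemish is a typo in your final sentence, where the implication you have actually established should read $\delta(A)\cap A\neq 1\Rightarrow\delta\in\inn(A)$ rather than $\delta(M)\cap M\neq 1\Rightarrow\delta\in\inn(A)$.
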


Note that we may replace the word ``malnormal'' in Lemma \ref{lem:malcharNormaliser} with the word ``malcharacteristic''.

\begin{lemma}
\label{lem:malcharNormaliser}
If $M$ is malnormal in $H$ and $K\neq1$ is a normal subgroup of $M$ then $N_H(K)=M$.
\end{lemma}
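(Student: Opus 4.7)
The plan is straightforward. One inclusion is immediate: since $K$ is normal in $M$, every element of $M$ normalises $K$, so $M \leq N_H(K)$. The work is entirely in the reverse inclusion $N_H(K) \leq M$.

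To prove this I would take any $g \in N_H(K)$ and produce a non-trivial element of $M^g \cap M$, which by malnormality of $M$ in $H$ forces $g \in M$. Concretely, choose $k \in K$ with $k \neq 1$ (possible since $K \neq 1$). On the one hand $k \in K \leq M$. On the other hand, because $g$ normalises $K$, the element $k^g = g^{-1}kg$ lies in $K \leq M$ as well, and it also lies in $M^g$ trivially (being the conjugate by $g$ of the element $k \in M$). So $k^g$ is a non-trivial element of $M^g \cap M$, and malnormality of $M$ gives $g \in M$.

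There is no real obstacle — the argument is a one-line application of the malnormal defining implication — so I would simply record the above as the proof. The only thing worth noting, and what justifies the remark following the lemma, is that the same argument goes through verbatim with ``malcharacteristic'' in place of ``malnormal'': if $\delta \in \aut(H)$ satisfies $\delta(K) = K$ then for any non-trivial $k \in K$ the element $\delta(k)$ lies in $\delta(M) \cap M$ and is non-trivial, forcing $\delta \in \inn(M)$ by the malcharacteristic hypothesis, which is exactly the ``malcharacteristic analogue'' of $g \in M$.
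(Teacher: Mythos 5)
Your proof is correct, and since the paper omits the proof of this lemma as routine, your argument (produce the non-trivial element $k^g\in M^g\cap M$ for $g\in N_H(K)$ and invoke malnormality) is exactly the intended one. No issues.
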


\p{Free, malcharacteristic subgroups}
Lemma \ref{lem:malcharArbRank} constructs free, malcharacteristic subgroups from a single ``seed'' subgroup $M$. This lemma is central to Theorem \ref{thm:mainconstruction}. We use $\infty$ to denote the first infinite cardinal $|\mathbb{N}|$.

\begin{lemma}
\label{lem:malcharArbRank}
If a group $H$ contains a malcharacteristic subgroup $M$ which is free of rank $m>1$ (possibly $m=\infty$) then for each $n\in\mathbb{N}\cup\{\infty\}$ the group $H$ contains a malcharacteristic subgroup $M_n$ which is free of rank $n$.
\end{lemma}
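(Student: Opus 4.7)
The plan is to apply Lemma \ref{lem:malnormLEQmalchar} to reduce the statement to a purely free-group problem: it suffices to exhibit, for each $n\in\mathbb{N}\cup\{\infty\}$, a subgroup $M_n\leq M$ of rank $n$ that is malnormal in $M$, since then Lemma \ref{lem:malnormLEQmalchar} (taken with its ``$M$'' being our $M$ and its ``$A$'' being our $M_n$) immediately upgrades $M_n$ to a malcharacteristic subgroup of $H$. By the Nielsen--Schreier theorem every subgroup of the free group $M$ is free, so only the rank is a nontrivial constraint.

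I would next reduce to the rank-two case. Choose two distinct basis elements $a,b$ of $M$ and set $F:=\langle a,b\rangle$. Then $F$ is a free factor of $M$, and a short normal-form calculation in a free product shows that every free factor is malnormal in the product, so $F$ is malnormal in $M$. Malnormality is transitive (if $A$ is malnormal in $B$ and $B$ is malnormal in $C$ then $A$ is malnormal in $C$, directly from the inclusion $A^g\cap A\subseteq B^g\cap B$ for all $g$), so it suffices to find, for each $n$, a free malnormal subgroup of $F\cong F(a,b)$ of rank $n$.

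For $n=1$, I would take $\langle a\rangle$: as $a$ is not a proper power, a cyclic-reduction argument in a free group shows $x^{-1}a^k x\in\langle a\rangle\setminus\{1\}\Rightarrow x\in\langle a\rangle$, so $\langle a\rangle$ is malnormal. For $n\geq 2$ (including $n=\infty$), the plan is to invoke Theorem \ref{thm:FreeMalnormMetric} applied to the trivial presentation $F(a,b)=\langle a,b;\varnothing\rangle$: it is enough to exhibit a countable set $\mathbf{s}=\{s_1,s_2,\ldots\}$ of non-proper-powers in $F(a,b)$ such that $\langle a,b;\mathbf{s}\rangle$ satisfies $C'(1/6)$, since then any subset $\{s_1,\ldots,s_n\}$ of $\mathbf{s}$ is also $C'(1/6)$, and Theorem \ref{thm:FreeMalnormMetric} yields that $\langle s_1,\ldots,s_n\rangle$ is free of rank $n$ and malnormal in $F(a,b)$.

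The main obstacle is therefore the production of a $C'(1/6)$ family of arbitrary countable cardinality in $F(a,b)$, which is classical. Concretely, one can take $s_i$ to be a word with many $a$- and $b$-blocks of rapidly growing lengths, for instance $s_i=a b^{N_i} a b^{2N_i}\cdots a b^{13 N_i}$ with $N_i:=100^i$; the large number of distinct blocks forces every piece to occupy at most roughly $1/7$ of any relator, and the rapid growth of $N_i$ in $i$ prevents long alignments between cyclic conjugates of $s_i^{\pm 1}$ and $s_j^{\pm 1}$ for $i\neq j$, so that $C'(1/6)$ can be verified by direct inspection. Fixing such a family and setting $M_n:=\langle s_1,\ldots,s_n\rangle$ for $2\leq n<\infty$ and $M_\infty:=\langle s_1,s_2,\ldots\rangle$ then completes the proof.
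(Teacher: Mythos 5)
Your overall route is the same as the paper's: reduce to a rank-two free factor $F=\langle a,b\rangle$ of $M$ using Lemma \ref{lem:malnormLEQmalchar} together with the facts that free factors are malnormal and that malnormality is transitive, and then produce malnormal subgroups of every countable rank inside $F(a,b)$ from $C'(1/6)$ families of non-proper-powers. The paper does exactly this, except that it simply cites \cite[Lemma 6]{BumaginWise2005} for the existence of such families and \cite[Theorems 2.11 \& 2.14]{wise2001residual} for malnormality, and it treats $n=\infty$ by a direct-limit argument (any identity violating freeness or malnormality of $M_\infty$ involves only finitely many generators and so already fails in some finite-rank $M_i$) rather than by applying a small cancellation theorem to an infinite relator set. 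Your use of Theorem \ref{thm:FreeMalnormMetric} with $\mathbf{r}=\varnothing$ is legitimate and not circular, but citing Wise directly is cleaner.

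The one genuine flaw is your explicit family. The words $s_i=ab^{N_i}ab^{2N_i}\cdots ab^{13N_i}$ do \emph{not} satisfy $C'(1/6)$: the word $b^{11N_i}\,a\,b^{12N_i}$ occurs both across the junction of blocks $11$ and $12$ and (as a proper subword) across the junction of blocks $12$ and $13$, so it is a piece of length $23N_i+1$, whereas $|s_i|=91N_i+13$; the ratio is about $23/91>1/4$, so even $C'(1/4)$ fails. The "roughly $1/7$" estimate is not correct for $13$ blocks. The defect is quantitative and easily repaired --- with $k$ blocks the longest intra-word piece has length about $(2k-3)N_i$ against $|s_i|\approx\tfrac{k(k+1)}{2}N_i$, so $k\geq 24$ blocks (which also handles the cross-pieces between $s_i$ and $s_j$) gives $C'(1/6)$ --- or avoided entirely by citing the classical existence result as the paper does. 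With that correction your argument is complete.
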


\begin{proof}
Suppose $M=\langle \mathbf{x}\rangle$ is a malcharacteristic subgroup of $H$ which is free on the set $\mathbf{x}$, where $|\mathbf{x}|=m$. Let $x_1, x_2\in \mathbf{x}$ and consider $A=\langle x_1, x_2\rangle$ (clearly $A$ is free of rank two). Now, $A$ is malnormal in $M$ and so $A$ is malcharacteristic in $H$, by Lemma \ref{lem:malnormLEQmalchar}. Therefore, again by Lemma \ref{lem:malnormLEQmalchar}, it is sufficient to prove that $A\cong F(x_1, x_2)$, the free group of rank two, contains malnormal subgroups of arbitrary rank (possibly countably infinite).

To see that $F(x_1, x_2)$ contains malcharacteristic subgroups of arbitrary rank $n\in\mathbb{N}\cup\{\infty\}$, first note that for each such $n$ there exists a $C^{\prime}(1/6)$ small cancellation set $\mathbf{s}_n\subset F(x_1, x_2)$ which has cardinality $n$ and where no element of $\mathbf{s}_n$ is a proper power \cite[Lemma 6]{BumaginWise2005}. If $n<\infty$ then $M_n:=\langle\mathbf{s}_n\rangle$ is a malnormal subgroup of $F(x_1, x_2)$ which is free of rank $n$ \cite[Theorems~2.11~\&~2.14]{wise2001residual}.
If $n=\infty$ then denote by $y_i$ for $i\in\mathbb{N}$ the elements of $\mathbf{s}_{\infty}$, so $\mathbf{s}_{\infty}=\{y_1, y_2, \ldots\}$, write $M_i=\langle y_1, \ldots, y_i\rangle$, and write $M_{\infty}:=\langle\mathbf{s}_{\infty}\rangle$. As $M_{\infty}$ is the union of the chain of subgroups $M_1<M_2<\ldots$,
any identity of the form
$U_0(\mathbf{s}_{\infty})=1$ or $W(x_1, x_2)^{-1}U_1(\mathbf{s}_{\infty})W(x_1, x_2)=U_2(\mathbf{s}_{\infty})$
which holds in $M_{\infty}$ also holds in $M_i$ for some $i\in\mathbb{N}$.
Therefore, as each $M_i$ with $i\in\mathbb{N}$ is free and malnormal in $F(x_1, x_2)$ it follows that $M_{\infty}$ is a malnormal subgroup of $F(x_1, x_2)$ which is free of countably-infinite rank.
\end{proof}

\p{A malcharacteristic subgroup}
Let $i, j, k\geq6$. We record here that the subgroup $M=\langle x, y\rangle$, $x$ and $y$ as below with $\rho\gg\max(i,j,k)$, is a malcharacteristic subgroup of $T_{i, j, k}=\langle a, b; a^i, b^j, (ab)^k\rangle$ which is free of rank two.
\begin{align*}
x&:=(ab^{-1})^{3}(a^2b^{-1})^{3}(ab^{-1})^3(a^2b^{-1})^4\ldots (ab^{-1})^3(a^2b^{-1})^{\rho+2}\\
y&:=(ab^{-1})^3(a^2b^{-1})^{\rho+3}(ab^{-1})^3(a^2b^{-1})^{\rho+4}\ldots (ab^{-1})^{3}(a^2b^{-1})^{2\rho+2}
\end{align*}
This fact is proven in Lemma \ref{lem:malcharTriange}.

\section{The construction underlying Theorem \ref{thm:intro1}}
\label{sec:Construction}
In this section we prove Theorem \ref{thm:mainconstruction}, which proves Theorem \ref{thm:intro1} modulo the existence of a free, malcharacteristic subgroup $M$ of rank two in the group $T_{i}$ for each $i\geq6$.
We also prove Theorems \ref{thm:FunctorialProperties}--\ref{thm:FreeProduct}, which are the results underlying Theorems \ref{thm:intro2}, \ref{thm:intro3} and \ref{thm:intro4}.


\p{The construction}
In the construction of Theorem \ref{thm:mainconstruction} we simply specify the subgroups $M_n$ and $K$ in Lemma \ref{lem:underlying}.
Recall that by $\aut^0(G)$ we mean the full pre-image of $\out^0(G)$ in $\aut(G)$ under the natural map.

\begin{lemma}
\label{lem:underlying}
Suppose that $H$ and $\phi\in\aut(H)$ are such that Conditions (\ref{corolcondition1}) and (\ref{corolcondition4}) of Lemma \ref{lem:corolofmaintheorem} hold.
Suppose that $M_n$ is a malcharacteristic subgroup of $H$ which is free of rank $n>1$, and $K\neq1$ is a normal subgroup of $M_n$.

Let $G=H\ast_{(K, \phi)}$. Then there exists an index-one or-two subgroup $\out^0(G)$ of $\out(G)$ such that $\out^0(G)\cong M_n/K$. Moreover, $\aut^0(G)= \inn(G)\rtimes\out^0(G)$, and so $\aut^0(G)\cong G\rtimes M_n/K$.

Suppose, in addition, that $\phi$ is such that Condition (\ref{corolcondition5}) of Lemma \ref{lem:corolofmaintheorem} holds. Then $\out^0(G)=\out(G)$ and $\aut^0(G)=\aut(G)$.
\end{lemma}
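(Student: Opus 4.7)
The plan is to reduce to Lemma \ref{lem:corolofmaintheorem} by verifying that its Conditions (\ref{corolcondition2}) and (\ref{corolcondition3}) follow from the hypothesis that $M_n$ is malcharacteristic, and then to identify $N_H(K)$ using Lemma \ref{lem:malcharNormaliser}. Conditions (\ref{corolcondition1}) and (\ref{corolcondition4}) are given in the hypothesis, and Condition (\ref{corolcondition5}) is precisely what is assumed for the additional conclusion, so once (\ref{corolcondition2}) and (\ref{corolcondition3}) are in hand the lemma is essentially a direct application of Lemma \ref{lem:corolofmaintheorem} combined with Lemma \ref{lem:malcharNormaliser}.

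For Condition (\ref{corolcondition3}), I would use characterisation (\ref{list:thirdCharacterisation}) of Lemma \ref{lem:MalcharCharacterisation}: suppose $\delta\in\aut(H)$ with $\delta(K)\cap K\neq 1$ and pick $1\neq k\in\delta(K)\cap K$. Since $K\leq M_n$ we have $k\in \delta(M_n)\cap M_n$, so $\delta(M_n)\cap M_n\neq 1$, and malcharacteristicity of $M_n$ forces $\delta\in\inn(M_n)\leq\inn(H)$. For Condition (\ref{corolcondition2}), I would first apply Lemma \ref{lem:malcharNormaliser} to conclude $N_H(K)=M_n$; this gives the containment $C_H(K)\leq N_H(K)=M_n$, hence $C_H(K)=C_{M_n}(K)$. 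Now $M_n$ is a non-abelian free group (as $n>1$) and $K$ is a non-trivial normal subgroup of it; standard free-group facts then force $C_{M_n}(K)=1$ (centralisers in a free group are cyclic, and a non-trivial cyclic subgroup cannot be normal in a non-abelian free group).

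Having verified the hypotheses, Lemma \ref{lem:corolofmaintheorem} delivers an index-one or -two subgroup $\out^0(G)\leq\out(G)$ with $\out^0(G)\cong N_H(K)/K$ and $\aut^0(G)=\inn(G)\rtimes\out^0(G)$. Substituting the identification $N_H(K)=M_n$ obtained above yields $\out^0(G)\cong M_n/K$ and $\aut^0(G)\cong G\rtimes M_n/K$. The final sentence of the lemma is then immediate from the corresponding statement in Lemma \ref{lem:corolofmaintheorem}: if Condition (\ref{corolcondition5}) also holds then $\out^0(G)=\out(G)$, and correspondingly $\aut^0(G)=\aut(G)$.

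The potential obstacle is the centraliser computation $C_{M_n}(K)=1$; it is elementary but needs the non-abelianness of $M_n$ and the non-triviality of $K$, so one must record why the hypothesis $n>1$ (and not merely $n\geq 1$) is being used. Everything else is a tidy bookkeeping of implications between the malcharacteristic assumption and the hypotheses of Lemma \ref{lem:corolofmaintheorem}.
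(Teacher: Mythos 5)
Your proposal is correct and follows essentially the same route as the paper: verify Conditions (\ref{corolcondition2}) and (\ref{corolcondition3}) of Lemma \ref{lem:corolofmaintheorem} from malcharacteristicity, identify $N_H(K)=M_n$ via Lemma \ref{lem:malcharNormaliser}, and conclude; in particular your centraliser argument (a non-trivial centraliser would force $K$ to be cyclic, impossible for a non-trivial normal subgroup of a non-abelian free group) is exactly the paper's. The only nitpick is that ``centralisers in a free group are cyclic'' should read ``centralisers of non-trivial elements,'' but this does not affect the argument.
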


\begin{proof}
By Lemma \ref{lem:malcharNormaliser}, $N_H(K)=M_{n}$. Therefore, to prove the result it is sufficient to prove that $H$, $K$ and $\phi$ satisfy Conditions (\ref{corolcondition2}) and (\ref{corolcondition3}) of Lemma \ref{lem:corolofmaintheorem}.

The subgroup $M_n$ is malcharacteristic in $H$, so (\ref{corolcondition3}) holds.
We now prove that $C_H(K)$ is trivial, so (\ref{corolcondition2}) holds. Suppose that $g\in H$ is such that $[k, g]=1$ for all $k\in K$. Then $g\in M_n$, by malnormality of $M_n$, and so $C_H(K)\leq M_n$. As $M_n$ is free we have that $K$ is cyclic. Now, non-trivial normal subgroups of non-cyclic free groups can never be cyclic. Thus, as $M_n$ is free of rank $n>1$ we conclude that $C_H(K)$ is trivial, as required.
%
\end{proof}

If $\mathbf{r}\subseteq F(\mathbf{x})$ then we write $\langle\langle\mathbf{r}\rangle\rangle$ for the normal closure of the set $\mathbf{r}$ in $F(\mathbf{x})$, so $F(\mathbf{x})/\langle\langle\mathbf{r}\rangle\rangle=\pi_1(\langle\mathbf{x}; \mathbf{r}\rangle)$.
We say that $\langle\mathbf{y}; \mathbf{s}\rangle$ is a \emph{quotient presentation} of $\langle\mathbf{x}; \mathbf{r}\rangle$ if $\mathbf{x}=\mathbf{y}$ and $\langle\langle\mathbf{r}\rangle\rangle\lneq \langle\langle\mathbf{s}\rangle\rangle$.
We now have the main result of this section.

\begin{theorem}
\label{thm:mainconstruction}
Suppose that the group $H$ has
\begin{enumerate}
\item\label{item:mainconstruction1} Serre's property FA;
\item\label{item:mainconstruction2} non-trivial outer automorphism group; and
\item\label{item:mainconstruction3} a malcharacteristic subgroup which is free of rank two.
\setcounter{enumerateCounter}{\value{enumi}}
\end{enumerate}
Then:
\begin{enumerate}[i.]
\item\label{point:mainconstruction1}
For every countable group presentation $\mathcal{P}$ there exists an automorphism-induced HNN-extension $H_{\mathcal{P}}$ of $H$ such that $\out^0(H_{\mathcal{P}})\cong \pi_1(\mathcal{P})$, where $\out^0(H_{\mathcal{P}})$ is an index-one or-two subgroup of $\out(H_{\mathcal{P}})$. Moreover, $\aut^0(H_{\mathcal{P}})= \inn(H_{\mathcal{P}})\rtimes\out^0(H_{\mathcal{P}})$, and so $\aut^0(H_{\mathcal{P}})\cong H_{\mathcal{P}}\rtimes \pi_1(\mathcal{P})$.
\item\label{point:mainconstruction3}
For $\mathcal{P}_1$ and $\mathcal{P}_2$ countable group presentations, if $\mathcal{P}_2$ is a quotient presentation of $\mathcal{P}_1$ then there is a surjection $H_{\mathcal{P}_1}\twoheadrightarrow H_{\mathcal{P}_2}$.
\end{enumerate}
Suppose, in addition, that the group $H$ has
\begin{enumerate}
\setcounter{enumi}{\value{enumerateCounter}}
\item\label{item:mainconstruction4} a non-inner automorphism $\phi$ such that $\widehat{\phi}\in\out(H)$ has odd or infinite order.
\end{enumerate}
Then $\out^0(H_{\mathcal{P}})=\out(H_{\mathcal{P}})$ and $\aut^0(H_{\mathcal{P}})=\aut(H_{\mathcal{P}})$.
\end{theorem}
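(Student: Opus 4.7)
The plan is to combine the two main technical tools available: Lemma~\ref{lem:malcharArbRank}, which upgrades a single rank-two seed into free malcharacteristic subgroups of every rank, and Lemma~\ref{lem:underlying}, which pins down $\out^0$ of an automorphism-induced HNN-extension as a quotient $M_n/K$. Given a countable group presentation $\mathcal{P}=\langle\mathbf{x};\mathbf{r}\rangle$ with $Q=\pi_1(\mathcal{P})$, I first normalise the presentation by adjoining, if necessary, two auxiliary generators $x_{-1},x_0$ together with the relators $x_{-1}$ and $x_0$. This does not affect the isomorphism type of $Q$, but it ensures $n:=|\mathbf{x}|\geq 2$ and that $K:=\langle\langle\mathbf{r}\rangle\rangle_{F(\mathbf{x})}\neq 1$. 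By hypothesis~(\ref{item:mainconstruction3}) and Lemma~\ref{lem:malcharArbRank}, $H$ contains a malcharacteristic free subgroup $M_n\leq H$ of rank $n$; I fix such an $M_n$ along with an identification of a chosen free basis of $M_n$ with $\mathbf{x}$, so that $K$ sits inside $M_n$ as a non-trivial normal subgroup with $M_n/K\cong Q$.

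Using hypothesis~(\ref{item:mainconstruction2}), I choose any $\phi\in\aut(H)\setminus\inn(H)$ and set $H_{\mathcal{P}}:=H\ast_{(K,\phi)}$. Lemma~\ref{lem:underlying} now applies: its Condition~(\ref{corolcondition1}) is hypothesis~(\ref{item:mainconstruction1}), its Condition~(\ref{corolcondition4}) is the choice of $\phi$, and its remaining hypotheses are built into $M_n$ and $K$ by construction. The conclusion gives an index-one or-two subgroup $\out^0(H_{\mathcal{P}})$ of $\out(H_{\mathcal{P}})$ with $\out^0(H_{\mathcal{P}})\cong M_n/K\cong Q$, together with $\aut^0(H_{\mathcal{P}})=\inn(H_{\mathcal{P}})\rtimes\out^0(H_{\mathcal{P}})\cong H_{\mathcal{P}}\rtimes Q$, which is exactly part~(\ref{point:mainconstruction1}). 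When hypothesis~(\ref{item:mainconstruction4}) is available I instead choose $\phi$ satisfying it, so that Condition~(\ref{corolcondition5}) of Lemma~\ref{lem:corolofmaintheorem} also holds; the second clause of Lemma~\ref{lem:underlying} then upgrades the conclusion to $\out^0(H_{\mathcal{P}})=\out(H_{\mathcal{P}})$ and $\aut^0(H_{\mathcal{P}})=\aut(H_{\mathcal{P}})$.

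For part~(\ref{point:mainconstruction3}), the definition of quotient presentation forces $\mathcal{P}_1$ and $\mathcal{P}_2$ to have identical generating sets, so the normalisation above yields the same $\mathbf{x}$ for both presentations and I use the same $M_n$ and the same $\phi$ in each construction. The hypothesis $\langle\langle\mathbf{r}_1\rangle\rangle\lneq\langle\langle\mathbf{r}_2\rangle\rangle$ becomes $K_1\leq K_2$ in $M_n$, whence every defining relation of $H_{\mathcal{P}_1}=H\ast_{(K_1,\phi)}$ is already a defining relation of $H_{\mathcal{P}_2}=H\ast_{(K_2,\phi)}$; extending the identity on $H$ by $t\mapsto t$ therefore yields the required surjection, a direct instance of the ``$K\leq K_0$ implies $H\ast_{(K,\phi)}\twoheadrightarrow H\ast_{(K_0,\phi)}$'' observation highlighted in Section~\ref{sec:AutIndHNN} as a distinctive feature of automorphism-induced HNN-extensions. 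The only subtle point in the whole argument is ensuring $K\neq 1$ (which is forced on us by Lemma~\ref{lem:underlying}); this is precisely what the cheap augmentation by a redundant generator with a trivialising relator is designed to guarantee, and the augmentation is compatible with quotient presentations because it only enlarges the shared generating set in a symmetric way.
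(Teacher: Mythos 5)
Your proposal is correct and follows essentially the same route as the paper: augment the presentation with two dummy generators carrying trivialising relators (the paper uses $p,q$), realise the augmented relator subgroup as a non-trivial normal subgroup $K$ of a malcharacteristic free subgroup $M_n$ supplied by Lemma~\ref{lem:malcharArbRank}, apply Lemma~\ref{lem:underlying} for part~(\ref{point:mainconstruction1}) and the final claim, and obtain part~(\ref{point:mainconstruction3}) from the $K_1\leq K_2$ observation for automorphism-induced HNN-extensions. The only divergence is that the paper performs the augmentation \emph{unconditionally} rather than ``if necessary'', which removes the ambiguity your last sentence gestures at: with a conditional augmentation, a quotient pair such as $\langle x,y;-\rangle$ and $\langle x,y;x\rangle$ could be augmented asymmetrically (the first needs it to make $K\neq1$, the second does not), placing $K_1$ and $K_2$ inside malcharacteristic subgroups of different ranks and breaking the containment on which your surjection argument relies.
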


\begin{proof}
By Lemma \ref{lem:malcharArbRank}, and as $H$ contains a malcharacteristic subgroup which is free of rank two, the group $H$ contains malcharacteristic subgroups $M_n$ which are free of rank $n$ for any given $n\in\mathbb{N}\cup\{\infty\}$. Fix for each $n\in\mathbb{N}\cup\{\infty\}$ such a subgroup $M_n$.

Let $\mathcal{P}=\langle \mathbf{x}; \mathbf{r}\rangle$ be a given presentation. Let $H_{\mathcal{P}}$ be the automorphism-induced HNN-extension $H\ast_{(K_{\mathcal{P}}, \phi)}$ of $H$ where $\phi\in\aut(H)$ is a non-inner automorphism (with $\widehat{\phi}$ of odd or infinite order if such an automorphism exists) and where $K_{\mathcal{P}}$ is as follows:
Consider the presentation $\widehat{\mathcal{P}}=\langle {\mathbf{x}}, p, q; {\mathbf{r}}, p, q\rangle$, $p, q\not\in \mathbf{x}^{\pm1}$, and we shall write $\widehat{\mathbf{x}}=\mathbf{x}\sqcup\{p, q\}$ and $\widehat{\mathbf{r}}=\mathbf{r}\sqcup\{p, q\}$ (so $\widehat{\mathcal{P}}=\langle\widehat{\mathbf{x}}; \widehat{\mathbf{r}}\rangle$). Note that $\pi_1(\mathcal{P})\cong\pi_1(\widehat{\mathcal{P}})$, that $|\widehat{\mathbf{x}}|>1$, and that $\widehat{\mathbf{r}}$ is non-empty. Choose $M_{|\widehat{\mathbf{x}}|}$ to be malcharacteristic in $H$ of rank $|\widehat{\mathbf{x}}|$ and take $K_{\mathcal{P}}$ to be the normal subgroup of $M_{|\widehat{\mathbf{x}}|}$ associated with $\langle\langle\widehat{\mathbf{r}}\rangle\rangle$, the normal closure in $F(\widehat{\mathbf{x}})$ of $\widehat{\mathbf{r}}$.

By Lemma \ref{lem:underlying}, this construction proves Point (\ref{point:mainconstruction1}) of the theorem. Also by Lemma \ref{lem:underlying}, if there exists $\widehat{\phi}\in\out(H)$ which has odd or infinite order then $\out^0(H_{\mathcal{P}})=\out(H_{\mathcal{P}})$ and $\aut^0(H_{\mathcal{P}})=\aut(H_{\mathcal{P}})$.



To see that this construction proves Point (\ref{point:mainconstruction3}) of the theorem, note that as $\mathcal{P}_2=\langle\mathbf{x}; \mathbf{s}\rangle$ is a quotient presentation of $\mathcal{P}_2=\langle \mathbf{x}; \mathbf{r}\rangle$ we have that $\langle\langle \mathbf{r}, p, q\rangle\rangle\lneq\langle\langle \mathbf{r}, \mathbf{s}, p, q\rangle\rangle\leq F(\mathbf{x}, p, q)$. Therefore,
adding the relators $tUt^{-1}\phi(U)^{-1}$ to $H_{\mathcal{Q}}$, where $U\in M_{|\widehat{\mathbf{x}}|}$ corresponds to an element of $\langle\langle\mathbf{r}, \mathbf{s}, p, q\rangle\rangle\setminus\langle\langle\mathbf{r}, p, q\rangle\rangle$, induces the required group homomorphism $H_{\mathcal{P}_1}\twoheadrightarrow H_{\mathcal{P}_2}$.
\end{proof}


Note that because the subgroup $K_{\mathcal{P}}$ in the proof of Theorem \ref{thm:mainconstruction} is free, the presentation of the HNN-extension $H_{\mathcal{P}}=H\ast_{(K_{\mathcal{P}}, \phi)}$ is aspherical, and so minimal \cite[Theorem 3.1]{chiswell1981aspherical}. Thus, for $\mathcal{P}=\langle \mathbf{x}; \mathbf{r}\rangle$, the group $H_{\mathcal{P}}$ in the construction is finitely presented if and only if the set $\mathbf{x}$ and the group $\pi_1(\mathcal{P})\cong\out(H_{\mathcal{P}})$ are both finite.

\p{Proving Theorem \ref{thm:intro1}}
To obtain the groups $T_{\mathcal{P}}$ from Theorem \ref{thm:intro1}, take in Theorem \ref{thm:mainconstruction} the group $H$ to be $T_i$, the automorphism $\phi$ to be $a\mapsto b$, $b\mapsto b^{-1}a^{-1}$, and the malcharacteristic subgroup in Condition (\ref{item:mainconstruction3}) to be the subgroup $M:=\langle x, y\rangle$ stated in Section \ref{sec:Malchar}, above.

Theorem \ref{thm:GeneralTriangle1} is a related construction, and also follows from Theorem \ref{thm:mainconstruction}. Indeed, Theorem \ref{thm:mainconstruction} proves Theorem \ref{thm:intro1} and Theorem \ref{thm:GeneralTriangle1} modulo proving that the subgroup $M=\langle x, y\rangle$ of $T_{i, j, k}$, $i, j, k\geq6$, is malcharacteristic and free of rank two; that is, modulo proving that Condition (\ref{item:mainconstruction3}) of Theorem \ref{thm:mainconstruction} holds for these triangle groups. This is because the groups $T_{i, j, k}$ are well-known to possess Conditions (\ref{item:mainconstruction1}) and (\ref{item:mainconstruction2}), while the groups $T_i$ additionally possess Condition (\ref{item:mainconstruction4}) (the map $\phi: a\mapsto b$, $b\mapsto b^{-1}a^{-1}$ is non-inner of order three). 
Note that non-equilateral triangle groups have outer automorphism groups of order two or four (see Lemma \ref{lem:transversal}), so Condition (\ref{item:mainconstruction4}) does not hold for the triangle groups of Theorem \ref{thm:GeneralTriangle1}.
Theorem \ref{thm:intro1} and Theorem \ref{thm:GeneralTriangle1} are formally proven in Section \ref{sec:MalcharTriangle}.

%

\p{Functorial properties}
Countable group presentations form a category in the following sense:
We define the equivalence relation $\sim$ on presentations as $\langle \mathbf{x}; \mathbf{r}\rangle\sim\langle \mathbf{y}; \mathbf{s}\rangle$ if $\mathbf{x}=\mathbf{y}$ and $\langle\langle\mathbf{r}\rangle\rangle=\langle\langle\mathbf{s}\rangle\rangle$.
Form the category $\operatorname{Pres}_{\mathbf{x}}$ whose objects are group presentations with generating set $\mathbf{x}$, under the equivalence relation $\sim$ and with morphisms $\mathcal{P}_j\rightarrow \mathcal{P}_k$ if $\mathcal{P}_k$ is a quotient presentation of $\mathcal{P}_j$. Note that the normal subgroups of $F(\mathbf{x})$ form a category, with morphisms obtained from the usual subgroup partial order, and this category is naturally isomorphic to the category $\operatorname{Pres}_{\mathbf{x}}$. Note also that the categories $\operatorname{Pres}_{\mathbf{x}}$ and $\operatorname{Pres}_{\mathbf{y}}$ are isomorphic if $|\mathbf{x}|=|\mathbf{y}|$. Therefore, there exists a well-defined \emph{category of $n$-generator presentations $\operatorname{Pres}_{n}$}, $n\in\mathbb{N}\cup\{\infty\}$. The \emph{category of countable group presentations $\operatorname{Pres}$} is the union of the categories $\operatorname{Pres}_{n}$ for $n\in\mathbb{N}\cup\{\infty\}$.
%
Theorem \ref{thm:FunctorialProperties} now says that the construction of Theorem \ref{thm:mainconstruction} is functorial.

\begin{theorem}
\label{thm:FunctorialProperties}
The map defined by $\mathcal{P}\mapsto H_{\mathcal{P}}$ is a functor from the category of countable group presentations $\operatorname{Pres}$ to the category of groups $\operatorname{Grp}$.
\end{theorem}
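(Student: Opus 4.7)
The plan is to upgrade the object-level construction of Theorem \ref{thm:mainconstruction} together with the surjections from its Point (\ref{point:mainconstruction3}) into a functor, by fixing enough data globally to make the assignment canonical and then checking the two functor axioms essentially by inspection.

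First I would make global choices. For each $n\in\mathbb{N}\cup\{\infty\}$, fix once and for all a malcharacteristic free subgroup $M_n\leq H$ of rank $n$ (available by Lemma \ref{lem:malcharArbRank}), together with a fixed free basis of $M_n$, and hence a canonical isomorphism $\iota_n\colon F(\mathbf{x}_n)\to M_n$ for a standard generating set $\mathbf{x}_n$ of each category $\operatorname{Pres}_n$. With these choices pinned down, the recipe of Theorem \ref{thm:mainconstruction} produces, for each presentation $\mathcal{P}=\langle\mathbf{x}; \mathbf{r}\rangle$ in $\operatorname{Pres}_n$, a canonically determined normal subgroup $K_{\mathcal{P}}=\iota_{n+2}(\langle\langle\widehat{\mathbf{r}}\rangle\rangle)$ of $M_{n+2}$ and hence a canonically determined group $H_{\mathcal{P}}=H\ast_{(K_{\mathcal{P}},\phi)}$. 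Because $K_{\mathcal{P}}$ depends only on the normal closure $\langle\langle\widehat{\mathbf{r}}\rangle\rangle$ inside $F(\widehat{\mathbf{x}})$, the assignment $\mathcal{P}\mapsto H_{\mathcal{P}}$ descends to $\sim$-equivalence classes; that is, $H_{\mathcal{P}}=H_{\mathcal{P}'}$ on the nose whenever $\mathcal{P}\sim\mathcal{P}'$.

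Next I would define the functor on morphisms. A (non-identity) morphism $\mathcal{P}_1\to\mathcal{P}_2$ in $\operatorname{Pres}$ is precisely the data that $\mathcal{P}_1$ and $\mathcal{P}_2$ share a generating set $\mathbf{x}$ and that $\langle\langle\mathbf{r}_1\rangle\rangle\leq\langle\langle\mathbf{r}_2\rangle\rangle$ in $F(\mathbf{x})$; this translates, under $\iota_{|\widehat{\mathbf{x}}|}$, into the inclusion $K_{\mathcal{P}_1}\leq K_{\mathcal{P}_2}$ inside $M_{|\widehat{\mathbf{x}}|}$. Send this morphism to the unique homomorphism $H_{\mathcal{P}_1}\to H_{\mathcal{P}_2}$ that is the identity on $H$ and sends $t$ to $t$. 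This is well defined because every defining relation $tkt^{-1}=\phi(k)$ of $H_{\mathcal{P}_1}$ (with $k\in K_{\mathcal{P}_1}$) is already a relation of $H_{\mathcal{P}_2}$; it is surjective, recovering the map of Theorem \ref{thm:mainconstruction}(\ref{point:mainconstruction3}), since $H_{\mathcal{P}_2}$ is generated by $H\cup\{t\}$.

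Finally I would verify the two functor axioms, each of which is immediate from the explicit description. The identity morphism $\mathcal{P}\to\mathcal{P}$ corresponds to $K_{\mathcal{P}}\leq K_{\mathcal{P}}$ and is sent to the homomorphism $H_{\mathcal{P}}\to H_{\mathcal{P}}$ which is the identity on $H$ and fixes $t$; this is the identity of $H_{\mathcal{P}}$. For a composable pair $\mathcal{P}_1\to\mathcal{P}_2\to\mathcal{P}_3$, both the composite homomorphism and the direct homomorphism act as the identity on $H$ and send $t$ to $t$, and they agree on the canonical generating set of $H_{\mathcal{P}_1}$, hence coincide. There is no real obstacle here, only the bookkeeping task of fixing the malcharacteristic subgroups $M_n$ and their bases globally so that the construction is genuinely canonical rather than merely well defined up to isomorphism; once this bookkeeping is done, the functor properties drop out formally from the fact that both sides of the assignment are controlled by a poset of normal subgroups (of $F(\widehat{\mathbf{x}})$ on the source side and of $M_{|\widehat{\mathbf{x}}|}$ on the target side), with morphisms given by inclusion.
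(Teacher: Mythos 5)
Your proposal is correct and takes essentially the same route as the paper: the morphism-level content is exactly Theorem \ref{thm:mainconstruction}.\ref{point:mainconstruction3}, which is all the paper's (one-line) proof invokes. Your additional bookkeeping --- fixing the $M_n$ and their bases globally so the assignment is canonical, noting that $K_{\mathcal{P}}$ depends only on $\langle\langle\widehat{\mathbf{r}}\rangle\rangle$ so the map descends to $\sim$-classes, and checking the identity and composition axioms via the explicit description of the surjections --- is correct and fills in details the paper leaves implicit.
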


\begin{proof}
If there exists a morphism $\mathcal{P}_j\rightarrow\mathcal{P}_k$ then $\mathcal{P}_k$ is a quotient presentation of $\mathcal{P}_j$. Therefore, $H_{\mathcal{P}_j}\twoheadrightarrow H_{\mathcal{P}_k}$ by Theorem \ref{thm:mainconstruction}.\ref{point:mainconstruction3}.
\end{proof}

Let $Q$ be a fixed group and let $\mathbf{Q}$ be the category whose objects are quotient groups $Q/N_k$ and whose morphisms correspond to inclusion of kernels, so $Q/N_j\rightarrow Q/N_k$ if $N_j\lneq N_k$. Fixing a presentation of $\mathcal{P}$ of $Q$, the category $\mathbf{Q}$ is isomorphic to the subcategory of $\operatorname{Pres}$ with initial object $\mathcal{P}$. Hence, the functor of Theorem \ref{thm:intro2} also encodes information about quotient groups.

Write $\mathcal{P}_k$ for the image of $Q/N_k$ under the aforementioned isomorphism of categories. Suppose that the group $Q$ is such that $Q/N_j\cong Q/N_k$ implies $N_j=N_k$. Then $\mathbf{Q}$ is isomorphic to a subcategory of $\operatorname{Grp}$ and hence the map $Q/N_k\mapsto T_{\mathcal{P}_k}$ is a functor from a subcategory of $\operatorname{Grp}$ to $\operatorname{Grp}$.

On the other hand, it is impossible to choose for each group $Q$ a presentation $\mathcal{P}$ with $\pi_1(\mathcal{P})=Q$ such that the map $Q\mapsto T_{\mathcal{P}}$, factoring as $Q\mapsto\mathcal{P}\mapsto T_{\mathcal{P}}$, is functorial; this is because there exist non-isomorphic groups $Q_1$ and $Q_2$ such that $Q_1\twoheadrightarrow Q_2\twoheadrightarrow Q_1$. This is why Theorem \ref{thm:FunctorialProperties} is about a functor from $\operatorname{Pres}$ to $\operatorname{Grp}$ rather than from $\operatorname{Grp}$ to $\operatorname{Grp}$. Indeed, $Q_1$ and $Q_2$ can be taken to be $2$-generator, 1-relator groups \cite{Borshchev2006isomorphism}, so if we fix $n>1$ and restrict to all finitely presentable $n$-generator groups then the map $Q\mapsto T_{\mathcal{P}}$ can still not be made functorial. The examples from the introduction show that if $n=1$ then this map can be made functorial.


\p{Residual Properties}
Let $\mathbb{H}_{\operatorname{Fin}}$ be the class of groups $H_{\mathcal{P}}$ where $\pi_1(\mathcal{P})$ is finite.
Note that these presentations $\mathcal{P}$ are countable presentations of finite groups. In particular, if the input presentation $\mathcal{P}=\langle \mathbf{x}; \mathbf{r}\rangle$ in Theorem \ref{thm:ResidualProperties} has infinite generating set, so $|\mathbf{x}|=\infty$, then the presentations $\mathcal{P}_g=\langle \mathbf{x}; \mathbf{r}, \mathbf{s}\rangle$ in the following proof are presentations of finite groups but with infinite generating set $\mathbf{x}$. It follows that if $|\mathbf{x}|=\infty$ then the associated subgroups $K_{\mathcal{P}_g}$ cannot be finitely generated.
If $U$ and $V$ are words then we write $U\leq V$ to mean that $U$ is a subword of $V$.

\begin{theorem}
\label{thm:ResidualProperties}
If $\pi_1(\mathcal{P})$ is residually finite then $H_{\mathcal{P}}$ is residually-$\mathbb{H}_{\operatorname{Fin}}$.
\end{theorem}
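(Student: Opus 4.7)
The plan is to leverage Theorem~\ref{thm:mainconstruction}(\ref{point:mainconstruction3}): every quotient presentation $\mathcal{P}'$ of $\mathcal{P}$ with $\pi_1(\mathcal{P}')$ finite yields $H_{\mathcal{P}'}\in\mathbb{H}_{\operatorname{Fin}}$ and a surjection $H_{\mathcal{P}}\twoheadrightarrow H_{\mathcal{P}'}$. To establish that $H_{\mathcal{P}}$ is residually-$\mathbb{H}_{\operatorname{Fin}}$ it therefore suffices, given any non-trivial $g\in H_{\mathcal{P}}$, to exhibit such a $\mathcal{P}'$ under which the image of $g$ is non-trivial.

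First I would fix $1\neq g\in H_{\mathcal{P}}$ and write $g=h_0t^{\epsilon_1}h_1\cdots t^{\epsilon_n}h_n$ in Britton normal form with respect to $H_{\mathcal{P}}=H\ast_{(K_{\mathcal{P}},\phi)}$. If $n=0$ then $g\in H\setminus\{1\}$; since $H$ embeds into every HNN-extension of itself, the image of $g$ is non-trivial in $H_{\mathcal{P}'}$ for any quotient presentation $\mathcal{P}'$ of $\mathcal{P}$ with $\pi_1(\mathcal{P}')$ finite (which exists by residual finiteness of $\pi_1(\mathcal{P})$; in the extreme case one may take $\mathcal{P}'$ to be a presentation of the trivial group).

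For $n\geq1$ the image of the word $h_0t^{\epsilon_1}h_1\cdots t^{\epsilon_n}h_n$ remains Britton-reduced, and hence non-trivial, in $H_{\mathcal{P}'}=H\ast_{(K_{\mathcal{P}'},\phi)}$ provided every internal $h_i$ still satisfies its non-membership condition against the enlarged associated subgroup $K_{\mathcal{P}'}\supseteq K_{\mathcal{P}}$. Since $K_{\mathcal{P}'}\subseteq M_{|\widehat{\mathbf{x}}|}$, any $h_i\notin M_{|\widehat{\mathbf{x}}|}$ (respectively $h_i\notin\phi(M_{|\widehat{\mathbf{x}}|})$) automatically satisfies $h_i\notin K_{\mathcal{P}'}$ (resp.\ $h_i\notin\phi(K_{\mathcal{P}'})$) for free. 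For the finitely many remaining $h_i$ that do lie in $M_{|\widehat{\mathbf{x}}|}$ or $\phi(M_{|\widehat{\mathbf{x}}|})$, applying $\phi^{-1}$ to the latter reduces the task to producing a single normal subgroup $K'\lhd M_{|\widehat{\mathbf{x}}|}$ of finite index, containing $K_{\mathcal{P}}$, and avoiding a specified finite subset $S$ of $M_{|\widehat{\mathbf{x}}|}\setminus K_{\mathcal{P}}$. Residual finiteness of $M_{|\widehat{\mathbf{x}}|}/K_{\mathcal{P}}\cong\pi_1(\mathcal{P})$ supplies such a $K'$ as a finite intersection of preimages of finite-group projections separating the elements of $S$ from $1$.

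To close the argument I would use that $M_{|\widehat{\mathbf{x}}|}\cong F(\widehat{\mathbf{x}})$ is free to write $K'$ as the normal closure in $F(\widehat{\mathbf{x}})$ of some set $\widehat{\mathbf{r}'}=\mathbf{r}'\sqcup\{p,q\}$ containing $p$ and $q$; then $\mathcal{P}':=\langle\mathbf{x};\mathbf{r}'\rangle$ is a quotient presentation of $\mathcal{P}$ with $\pi_1(\mathcal{P}')\cong M_{|\widehat{\mathbf{x}}|}/K'$ finite and $K_{\mathcal{P}'}=K'$, so Theorem~\ref{thm:mainconstruction}(\ref{point:mainconstruction3}) supplies the desired surjection $H_{\mathcal{P}}\twoheadrightarrow H_{\mathcal{P}'}\in\mathbb{H}_{\operatorname{Fin}}$ sending $g$ to a non-trivial element. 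The main subtlety is the asymmetry between $K_{\mathcal{P}'}$ and $\phi(K_{\mathcal{P}'})$ in the Britton conditions, which is handled by pulling the $\phi(K_{\mathcal{P}'})$-side constraints back via $\phi^{-1}$ so that all finitely many residual requirements live inside the single residually finite quotient $\pi_1(\mathcal{P})$.
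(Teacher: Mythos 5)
Your proposal is correct and follows essentially the same route as the paper's proof: write $g$ in Britton-reduced form, use residual finiteness of $M_{|\widehat{\mathbf{x}}|}/K_{\mathcal{P}}\cong\pi_1(\mathcal{P})$ to find a finite-index normal subgroup $K'$ (equivalently a quotient presentation $\mathcal{P}'$ of a finite group) avoiding the finitely many syllables that could become pinches, and then apply the surjection of Theorem \ref{thm:mainconstruction}.(\ref{point:mainconstruction3}) together with Britton's lemma to see the image of $g$ is non-trivial in $H_{\mathcal{P}'}\in\mathbb{H}_{\operatorname{Fin}}$. Your explicit handling of the $\phi(K_{\mathcal{P}'})$-side constraints via $\phi^{-1}$ is exactly the paper's device of setting $\overline{h}_i:=\phi^{-1}(h_i)K_{\mathcal{P}}$ for syllables flanked by $t^{-1}\cdot t$.
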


\begin{proof}
%
Recall that $H_{\mathcal{P}}$ is an HNN-extension $H\ast_{(K_{\mathcal{P}}, \phi)}$. Let $g\in H\ast_{(K_{\mathcal{P}}, \phi)}\setminus \{1\}$ be arbitrary. By Britton's lemma, $g$ is represented by a word $W=h_0t^{\epsilon_1}h_1\cdots t^{\epsilon_m}h_m$ which does not contain any subwords of the form $tkt^{-1}$ or $t^{-1}\phi(k)t$ for any $k\in K_{\mathcal{P}}$.
For those $h_i$ in $N_H(K_{\mathcal{P}})$ such that $th_it^{-1}\leq W$, write $\overline{h}_i:=h_iK_{\mathcal{P}}\in M_n/K_{\mathcal{P}}$, and for those $h_i$ in $N_H(\phi(K_{\mathcal{P}}))$ such that $t^{-1}h_it\leq W$, write $\overline{h}_i:=\phi^{-1}(h_i)K_{\mathcal{P}}\in M_n/K_{\mathcal{P}}$.
As $M_n/K_{\mathcal{P}}\cong \pi_1(\mathcal{P})$ is residually finite there exists a finite group $P_g$ and a homomorphism $\overline{\sigma}_g: M_n/K_{\mathcal{P}}\rightarrow P_g$ such that $\overline{\sigma}_g(\overline{h}_i)\neq1$ for each $\overline{h}_i$ (if there are no $\overline{h}_i$, so for example if every $h_i\not\in N_H(K_{\mathcal{P}})\cup N_H(\phi(K_{\mathcal{P}}))$, then we may take $P_g$ to be trivial).
Now, there exists a quotient presentation $\mathcal{P}_g=\langle\mathbf{x}; \mathbf{r}, \mathbf{s}\rangle$ of $\mathcal{P}=\langle \mathbf{x}; \mathbf{r}\rangle$ which corresponds to the map $\overline{\sigma}_g$. Consider the group $T_{\mathcal{P}_g}=H\ast_{(K_{\mathcal{P}_g}, \phi)}$ from Theorem \ref{thm:intro1}. Then, the map $\sigma_g: T_{\mathcal{P}}\twoheadrightarrow T_{\mathcal{P}_g}$ from Theorem \ref{thm:mainconstruction}.\ref{point:mainconstruction3} is obtained by adding the relators $ tUt^{-1}\phi(U)^{-1}$, where $U$ corresponds to an element of $\langle\langle\mathbf{r}, \mathbf{s}, p, q\rangle\rangle\setminus\langle\langle\mathbf{r}, p, q\rangle\rangle\subseteq F(\mathbf{x}, p, q)$. In particular, this map naturally corresponds to the map $\overline{\sigma}_g$, and so if $h_i\in N_H(K_{\mathcal{P}})$ and $th_it^{-1}\leq W$ then $h_i\not\in K_{\mathcal{P}_g}$, and similarly if $h_i\in N_H(\phi(K_{\mathcal{P}}))$ and $t^{-1}h_it\leq W$ then $h_i\not\in \phi(K_{\mathcal{P}_g})$.
Note also that $N_H(K_{\mathcal{P}})=N_H(K_{\mathcal{P}_g})$.

We now prove that $\sigma_g(g)\neq1$, which proves the result.
Suppose $th_it^{-1}\leq W$. If $h_i\in N_H(K_{\mathcal{P}})$ then $h_i\not\in K_{\mathcal{P}_g}$ by the above. On the other hand, if $h_i\not\in N_H(K_{\mathcal{P}})$ then $h_i\not\in K_{\mathcal{P}_g}\leq N_H(K_{\mathcal{P}})$. Hence, $W$ does not contain any subword of the form $tkt^{-1}$ for any $k\in K_{\mathcal{P}_g}$. Similarly, $W$ does not contain any subword of the form $t^{-1}\phi(k)t$ for any $k\in K_{\mathcal{P}_g}$. That $\sigma_g(g)\neq1$ now follows from Britton's Lemma.
%
%
\end{proof}

\p{Choosing the subgroups \boldmath{$M_n$}}
Let $M_0=\langle x_0, y_0\rangle$ denote the malcharacteristic subgroup of $H$ which is free of rank two from Theorem \ref{thm:mainconstruction}. Now, the subgroup $M_n$ in the proof of Theorem \ref{thm:mainconstruction} may be taken to be any subgroup of $M_0$ which is generated by a small cancellation subset $\mathbf{s}_n$ of $F(x_0, y_0)$, such that $|\mathbf{s}_n|=n$ and $\mathbf{s}_n$ contains no proper powers. This flexibility allows for concrete examples of this construction to be written down (for example, the groups $T_{\mathcal{P}_{\infty}}$ and $T_{\mathcal{P}_{k}}$ from the introduction). Theorem  \ref{thm:FreeProduct} now demonstrates that if we are more strict in our choice of the subgroups $M_n$ then we can obtain more properties of the groups $H_{\mathcal{P}}$ from Theorem \ref{thm:mainconstruction}.


Suppose $\mathbf{x}\subset\mathbf{y}$, and let $\mathbf{r}\subset F(\mathbf{x})$. Then we write $\langle\langle\mathbf{r}\rangle\rangle_{F(\mathbf{x})}$ for the normal closure of $\mathbf{r}$ in the free group $F(\mathbf{x})$, and $\langle\langle\mathbf{r}\rangle\rangle_{F(\mathbf{y})}$ for the normal closure of $\mathbf{r}$ in the free group $F(\mathbf{y})$. Note that if we view $F(\mathbf{x})$ as a subgroup of $F(\mathbf{y})$ in the natural way then $\langle\langle\mathbf{r}\rangle\rangle_{F(\mathbf{x})}\leq \langle\langle\mathbf{r}\rangle\rangle_{F(\mathbf{y})}$.

\begin{theorem}
\label{thm:FreeProduct}
Let $H$ be as in Theorem \ref{thm:mainconstruction}.

Then $H$ contains a malcharacteristic subgroup $\langle p, q, z_1, z_2, \ldots\rangle$ which is free on the infinite set $\{p, q, z_1, z_2, \ldots\}$.
Suppose that the subgroups $M_n$, $n\in\mathbb{N}$, in the proof of Theorem \ref{thm:mainconstruction} are taken to be the subgroups $M_n:=\langle p, q, z_1, \ldots, z_n\rangle$, and that the subgroup $M_{\infty}$ is taken to be $\langle p, q, z_1, z_2, \ldots\rangle$. Then for every presentation $\mathcal{P}$ with finite generating set and
for every countable group presentation $\mathcal{Q}$ there exists a surjection $H_{\mathcal{P}}\twoheadrightarrow H_{\mathcal{P}\ast\mathcal{Q}}$.
\end{theorem}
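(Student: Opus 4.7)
The plan is to first exhibit the specific infinite-rank free malcharacteristic subgroup claimed in the statement, then observe that with this choice of basis the finite ``initial segments'' $M_n$ are themselves malcharacteristic, and finally use the nestedness $M_{|\widehat{\mathbf{x}}|} \leq M_{|\widehat{\mathbf{x}\sqcup\mathbf{y}}|}$ coming from this choice to produce the desired surjection by sending $h \mapsto h$ and $t \mapsto t$.

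First I would revisit the construction used in Lemma \ref{lem:malcharArbRank}. Inside a given rank-two malcharacteristic subgroup $\langle x_0, y_0\rangle$ of $H$, that proof produces a $C'(1/6)$ small cancellation set $\mathbf{s}_\infty \subset F(x_0, y_0)$ of countably infinite size containing no proper powers, and shows that $\langle\mathbf{s}_\infty\rangle$ is malnormal in $F(x_0, y_0)$, free of infinite rank, and (via Lemma \ref{lem:malnormLEQmalchar}) malcharacteristic in $H$. Relabelling its basis as $\{p, q, z_1, z_2, \ldots\}$ yields the subgroup promised in the theorem. Since both the $C'(1/6)$ condition and the proper-power-free condition are inherited by subsets, the set $\{p, q, z_1, \ldots, z_n\}$ generates a malnormal, free subgroup of $F(x_0, y_0)$ for every $n$, which is again malcharacteristic in $H$ by Lemma \ref{lem:malnormLEQmalchar}. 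Hence $M_n = \langle p, q, z_1, \ldots, z_n\rangle$ and $M_\infty = \langle p, q, z_1, z_2, \ldots\rangle$ are valid inputs to the construction of Theorem \ref{thm:mainconstruction}, and critically they satisfy $M_n \leq M_m \leq M_\infty$ whenever $n \leq m$.

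For the surjection, fix $\mathcal{P} = \langle \mathbf{x}; \mathbf{r}\rangle$ with $|\mathbf{x}|$ finite and $\mathcal{Q} = \langle \mathbf{y}; \mathbf{s}\rangle$ countable. Following the proof of Theorem \ref{thm:mainconstruction}, I would identify $\widehat{\mathbf{x}} = \mathbf{x} \sqcup \{p, q\}$ with the basis of $M_{|\widehat{\mathbf{x}}|}$ by the rule $p \mapsto p$, $q \mapsto q$, and $x_j \mapsto z_j$ for $j = 1, \ldots, |\mathbf{x}|$, and extend this identification to $\widehat{\mathbf{x} \sqcup \mathbf{y}}$ by sending the generator $y_k$ of $\mathbf{y}$ to $z_{|\mathbf{x}| + k}$. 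Under these compatible identifications, every generator of $K_{\mathcal{P}}$ (a conjugate, in $M_{|\widehat{\mathbf{x}}|}$, of one of $p$, $q$, or of a word from $\mathbf{r}$) is literally equal to a generator of $K_{\mathcal{P} \ast \mathcal{Q}}$ (a conjugate in the larger $M_{|\widehat{\mathbf{x} \sqcup \mathbf{y}}|}$ of one of $p$, $q$, or of a word from $\mathbf{r} \cup \mathbf{s}$), so $K_{\mathcal{P}} \leq K_{\mathcal{P} \ast \mathcal{Q}}$ as subgroups of $H$.

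With that inclusion the rest is automatic: $H_{\mathcal{P}}$ and $H_{\mathcal{P} \ast \mathcal{Q}}$ are automorphism-induced HNN-extensions over the same base group $H$ and with the same automorphism $\phi$, so the defining relations $tkt^{-1} = \phi(k)$ of $H_{\mathcal{P}}$ (one for each $k \in K_{\mathcal{P}}$) are a subset of the defining relations of $H_{\mathcal{P} \ast \mathcal{Q}}$. Therefore $h \mapsto h$, $t \mapsto t$ extends to a well-defined surjection $H_{\mathcal{P}} \twoheadrightarrow H_{\mathcal{P} \ast \mathcal{Q}}$. The one point that requires genuine care in the write-up is the bookkeeping of generator identifications: the nested choice $M_n = \langle p, q, z_1, \ldots, z_n\rangle$ is precisely what forces the words $p$, $q$ and the relators of $\mathbf{r}$ to be sent to the same elements of $H$ under both constructions, so that the inclusion $K_{\mathcal{P}} \leq K_{\mathcal{P} \ast \mathcal{Q}}$ holds on the nose and not merely up to isomorphism.
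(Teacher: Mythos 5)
Your proposal is correct and follows essentially the same route as the paper: the nested choice $M_n=\langle p,q,z_1,\ldots,z_n\rangle$ forces $K_{\mathcal{P}}\leq K_{\mathcal{P}\ast\mathcal{Q}}$ as subgroups of $H$, and enlarging the associated subgroup of an automorphism-induced HNN-extension induces the surjection $H\ast_{(K_{\mathcal{P}},\phi)}\twoheadrightarrow H\ast_{(K_{\mathcal{P}\ast\mathcal{Q}},\phi)}$. The only (cosmetic) difference is that the paper first treats the relator-free case $\mathcal{Q}=\langle\mathbf{y};-\rangle$ and then invokes Theorem \ref{thm:mainconstruction}.\ref{point:mainconstruction3} for general $\mathcal{Q}$, whereas you verify the containment of associated subgroups for general $\mathcal{Q}$ in one step.
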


\begin{proof}
Write $\mathbf{z}_n:=\{z_1, z_2, \ldots, z_n\}$ for $n\in\mathbb{N}$ and $\mathbf{z}_{\infty}:=\{z_1, z_2, \ldots\}$.
Suppose $\mathcal{P}=\langle \mathbf{x}; \mathbf{r(x)}\rangle$ and $\mathcal{Q}=\langle \mathbf{y}; -\rangle$.
Note that $H_{\mathcal{P}}$ has associated subgroup $K_{\mathcal{P}}=\langle\langle\mathbf{r(z_{|x|})}, p, q\rangle\rangle_{F(p, q, \mathbf{z_{|x|}})}$ (we identify the symbols $p, q$ here with the symbols $p, q$ in Theorem \ref{thm:mainconstruction}).
Then $H_{\mathcal{P}\ast\mathcal{Q}}$ is obtained from $H_{\mathcal{P}}$ by adding the relators $tUt^{-1}\phi(U)^{-1}$ to $H_{\mathcal{P}}$, for all $U\in \langle\langle p, q, \mathbf{r(z_{|x|})}\rangle\rangle_{F(p, q, \mathbf{z_{|x\sqcup y|}})}\setminus\langle\langle p, q, \mathbf{r(z_{|x|})}\rangle\rangle_{F(p, q, \mathbf{z_{|x|}})}$. This induces the required group homomorphism $H_{\mathcal{P}}\twoheadrightarrow H_{\mathcal{P}\ast\mathcal{Q}}$. The result then follows for all $\mathcal{Q}$ from Theorem \ref{thm:mainconstruction}.\ref{point:mainconstruction3}. 
\end{proof}

The free product is the coproduct in the category of groups, and so Theorem \ref{thm:FreeProduct} hints at the notion of a coproduct in our setting. However,
in general $H_{\mathcal{P}\ast\mathcal{Q}}\neq H_{\mathcal{Q}\ast\mathcal{P}}$ and so it is not clear whether or not $H_{\mathcal{Q}}$ always surjects onto $H_{\mathcal{P}\ast\mathcal{Q}}$, even if $\mathcal{Q}$ has a finite generating set.
Consider the subcategory $\operatorname{Pres}_{<\infty}:=\cup_{n\in\mathbb{N}} \operatorname{Pres}_n$ of $\operatorname{Pres}$.
If the surjection $H_{\mathcal{Q}}\twoheadrightarrow H_{\mathcal{P}\ast\mathcal{Q}}$ always exists for $\mathcal{Q}\in\operatorname{Pres}_{<\infty}$ then adding the morphisms $\mathcal{P}\rightarrow\mathcal{P}\ast\mathcal{Q}$ and $\mathcal{Q}\rightarrow\mathcal{P}\ast\mathcal{Q}$ to the category $\operatorname{Pres}_{<\infty}$ produces a new category $\overline{\operatorname{Pres}}_{<\infty}$ (possibly with a coproduct), and then the map $\overline{\operatorname{Pres}}_{<\infty}\rightarrow\operatorname{Grp}$ given by $\mathcal{P}\mapsto H_{\mathcal{P}}$ is still functorial.


\section{Malcharacteristic subgroups of free groups}
\label{sec:MalcharF2}
In this section we find concrete examples of malcharacteristic subgroups of the free group $F(a, b)$. To find such subgroups we use fibre products of Stallings' graphs to obtain an algorithm which decides whether or not the malcharacteristic property holds for certain subgroups of $F(a, b)$.

The examples we obtain are used to find similar examples in triangle groups.
Indeed, at the end of Section \ref{sec:Malchar} we gave two specific words $x$ and $y$ and stated that they generate a malcharacteristic subgroup $M=\langle x, y\rangle$ of $T_{i, j, k}$. Let $\widetilde{M}=\langle x, y\rangle$ be the subgroup of $F(a, b)$ generated by these words $x$ and $y$.
It follows from Lemma \ref{lem:malcharFree} that $\widetilde{M}$ is malcharacteristic in $F(a, b)$. We later use the fact that $\widetilde{M}$ is malcharacteristic in $F(a, b)$ to prove that $M$ is malcharacteristic in $T_{i,j,k}$.

\p{Maps of graphs}
The principal device applied in this section is that of maps of graphs; see Stallings' paper for more details as well as for the relevant definitions and terminology \cite{stallings1983topology}.

The key tool when studying maps of graphs is Stallings' folding algorithm \cite[Section 3.2]{stallings1983topology}.
The algorithm begins with a map of graphs $\Gamma_1\rightarrow \Gamma$, and factors this map as a composition $\Gamma_1\rightarrow\Gamma_2\rightarrow\cdots\rightarrow\Gamma_k\rightarrow\Gamma$ such that $\Gamma_k\rightarrow\Gamma$ is a locally injective map of graphs (and therefore is $\pi_1$-injective \cite[Proposition 5.3]{stallings1983topology}), and such that for $1\leq j < k$ the map $\Gamma_j\rightarrow\Gamma_{j+1}$ is a folding map. The graph $\Gamma_k$ is uniquely determined, although in general the individual foldings maps are not unique. A map of graphs $\Gamma_1\rightarrow\Gamma$ is \emph{folded} if it cannot be factored using the above algorithm.

Let $F(\mathbf{x})$ be a free group with basis $\mathbf{x}$. The basis $\mathbf{x}$ corresponds naturally to a bouquet of circles $\Gamma_\mathbf{x}$, which are directed and labeled by the letters $x_i\in \mathbf{x}$. Any word in $F(\mathbf{x})$ determines a closed path in $\Gamma_\mathbf{x}$. Let $\mathbf{s}=\{ W_1, W_2, \ldots, W_n\}$ be a set of words in $F(\mathbf{x})$. Then this set corresponds naturally to a bouquet $\Gamma_{\mathbf{s}}$ of $n$ circles, with circles corresponding to the $W_i\in \mathbf{s}$. There exists a natural map $\Gamma_{\mathbf{s}}\rightarrow \Gamma_\mathbf{x}$ which sends the $i$-th loop of $\Gamma_{\mathbf{s}}$ to the closed path of $\Gamma_{\mathbf{x}}$ corresponding to $W_i$. The \emph{Stallings' graph of $\mathbf{s}$} is the graph $\Gamma_{\mathbf{s}}$ equipped with the natural labeling induced by the map $\Gamma_{\mathbf{s}}\rightarrow\Gamma_\mathbf{x}$. A Stallings' graph $\Gamma_{\mathbf{s}}$ is \emph{folded} if the corresponding map $\Gamma_{\mathbf{s}}\rightarrow\Gamma_\mathbf{x}$ is folded. We use $\widehat{\Gamma}_{\mathbf{s}}$ to denote the \emph{folded Stallings' graph of $\mathbf{s}$}, so the unique folded graph, equipped with the natural map, such that the map $\Gamma_{\mathbf{s}}\rightarrow\Gamma_\mathbf{x}$ factors as $\Gamma_{\mathbf{s}}\rightarrow\widehat{\Gamma}_{\mathbf{s}}\rightarrow\Gamma$. If $C=\langle \mathbf{s}\rangle$ and the graphs $\Gamma_{\mathbf{s}}$ and $\widehat{\Gamma}_{\mathbf{s}}$ have the same rank (that is, $\pi_1(\Gamma_{\mathbf{s}})\cong \pi_1(\widehat{\Gamma}_{\mathbf{s}})$) then the induced map $\pi_1(\widehat{\Gamma}_{\mathbf{s}})\hookrightarrow\pi_1(\Gamma_{\mathbf{x}})$ corresponds to the map $C\hookrightarrow F(\mathbf{x})$.

\p{Fibre products}
The proofs in this section apply fibre products in the category of graphs \cite[Theorem 5.5]{stallings1983topology}.
If $C=\langle \mathbf{s}\rangle$ is a subgroup of $F(\mathbf{x})$ then $C$ is malnormal in $F(\mathbf{x})$ if and only if the non-diagonal components of the fibre-product $\widehat{\Gamma}_{\mathbf{s}}\otimes\widehat{\Gamma}_{\mathbf{s}}$ form a forest. Fibre products are computable for finitely generated subgroups of free groups, and hence malnormality is decidable for such subgroups of free groups.\footnote{For a different proof of decidability see \cite{baumslag1999malnormality}.} Also, if $C=\langle \mathbf{s}\rangle$ and $D=\langle \mathbf{t}\rangle$ are subgroups of $F(\mathbf{x})$ then $D\cap C^g=1$ for all $g\in F(\mathbf{x})$ if and only if the fibre-product $\widehat{\Gamma}_{\mathbf{t}}\otimes\widehat{\Gamma}_{\mathbf{s}}$ is a forest.
In particular, if $\delta\in\aut(F(\mathbf{x}))$ then by computing $\widehat{\Gamma}_{\delta(\mathbf{s})}\otimes\widehat{\Gamma}_{\mathbf{s}}$ it is decidable whether or not $\delta(C)\cap C^g=1$ for all $g\in F(\mathbf{x})$.


\p{Length preserving automorphisms}
An automorphism $\alpha\in\aut(F(a, b))$ is \emph{length-preserving} if $|\alpha(a)|=1=|\alpha(b)|$.
We begin with the following lemma, which follows immediately from a result of Cohen--Metzler--Zimmermann \cite[Statement 3.9]{cohen1981does}.
For words $\mathbf{s}\in F(a, b)$ we write $\mathbf{s}^{{+}}$ to mean the subsemigroup of $F(a, b)$ generated by the words $\mathbf{s}$.%
\footnote{
We use the ``Kleene plus'' $\mathbf{s}^{{+}}$ (subsemigroup) rather than the ``Kleene star'' $\mathbf{s}^{\ast}$ (submonoid) as we reserve the notation $\mathbf{s}^{\ast}$ for the symmetrised closure (see Section \ref{sec:MalnormAspher}).
}

\begin{lemma}
\label{lem:PrimitiveForm}
For all $\delta\in \aut(F(a, b))$ there exists an inner automorphism $\gamma\in\inn(F(a, b))$, length-preserving automorphisms $\alpha_1, \alpha_2$, and an integer $m\in\mathbb{Z}$ such that one of the following occurs:
\begin{enumerate}
\item\label{item:PrimitiveForm1} $\delta\gamma$ is length-preserving; or
\item\label{item:PrimitiveForm2} $\alpha_2\delta\alpha_1\gamma(a)=ab^m$, $\alpha_2\delta\alpha_1\gamma(b)=b$ with $m\neq0$; or
\item\label{item:PrimitiveForm3} $\delta\alpha_1\gamma(a),\delta\alpha_1\gamma(b)\in\{ab^m,ab^{m+1}\}^{{+}}$.
\end{enumerate}
\end{lemma}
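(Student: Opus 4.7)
The plan is to translate the classification of bases of $F(a,b)$ given by Cohen--Metzler--Zimmermann's Statement 3.9 into the automorphism setting. Any $\delta\in\aut(F(a,b))$ is determined by the ordered basis $(\delta(a),\delta(b))$ of $F(a,b)$, and conversely any basis determines an automorphism. So the lemma is really a normal-form statement about bases of the rank-two free group, which is exactly what CMZ give.

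Before applying CMZ, I would record how the three kinds of auxiliary automorphism act on the pair of images. An inner automorphism $\gamma=\gamma_g$ composed on the right sends $(\delta(a),\delta(b))$ to $(\delta(g)^{-1}\delta(a)\delta(g),\delta(g)^{-1}\delta(b)\delta(g))$, i.e.\ simultaneously conjugates both image words; so choosing $\gamma$ appropriately cyclically reduces the pair. A length-preserving $\alpha_1$ on the right acts on the pair by permuting and/or inverting its two entries (it renames the domain basis). A length-preserving $\alpha_2$ on the left rewrites each image via a signed permutation of $\{a,b\}$ (it renames the codomain basis). With these three operations at our disposal, the lemma reduces to: after suitable conjugation and pre-/post-composition by length-preserving autos, every basis of $F(a,b)$ is either itself length-preserving, or equals $(ab^m,b)$ with $m\neq 0$, or lies coordinate-wise in $\{ab^m,ab^{m+1}\}^{+}$ for some integer $m$.

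I would then simply quote CMZ's Statement 3.9, which gives exactly this trichotomy of normal forms for primitive pairs in $F(a,b)$. The three cases in the lemma correspond, in order, to: a basis already equivalent to $(a,b)$ up to a signed permutation (Case 1); the ``Nielsen'' family of bases $(ab^m,b)$, which requires both a left and a right length-preserving adjustment to reach the canonical form (Case 2); and the family of bases contained in a single ``Sturmian''/Christoffel semigroup $\{ab^m,ab^{m+1}\}^{+}$, whose members are already normalized on the codomain side and so need only a right length-preserving adjustment (Case 3).

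The main obstacle is not mathematical but notational: I must read CMZ's normal form carefully to verify that the placement of $\alpha_1$, $\alpha_2$ and $\gamma$ in each of the three cases of the lemma matches exactly what their statement produces. In particular I must confirm that in Case 3 no left length-preserving automorphism $\alpha_2$ is required, because the semigroup $\{ab^m,ab^{m+1}\}^{+}$ is already distinguished among its signed-permutation images by the chosen form of its generators. Once this bookkeeping is done, no further calculation is needed.
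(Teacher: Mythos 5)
Your proposal matches the paper exactly: the paper offers no proof beyond the assertion that the lemma ``follows immediately from'' Cohen--Metzler--Zimmermann's Statement 3.9, together with remarks identifying $\gamma$ with cyclic reduction of the image pair, $\alpha_1$ (precomposition) with renaming the domain basis, and $\alpha_2$ (postcomposition, needed only in Case (\ref{item:PrimitiveForm2})) with a signed permutation of the target letters. Your bookkeeping of which side each auxiliary automorphism acts on is correct, so the proposal is sound and takes essentially the same route.
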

The length-preserving automorphism $\alpha_1$ corresponds to trivial changes of notation in the images of $a$ and $b$, for example replacing $\{ab^m,ab^{m+1}\}^{{+}}$ with $\{ba^m,ba^{m+1}\}^{{+}}$. The length-preserving automorphism $\alpha_2$ in Case (\ref{item:PrimitiveForm2}) allows for the images to be swapped, for example if $\delta$ is the map $\delta: a\mapsto b$, $b\mapsto ab^m$ then we take $\alpha_2:a\mapsto b$, $b\mapsto a$ and take $\alpha_1$ and $\gamma$ to be trivial.



\p{Determining the malcharacteristic property}
Lemma \ref{lem:malcharlem} now gives an algorithm which decides if a given subgroup from a certain class $\mathcal{C}$ of subgroups of $F(a, b)$ is malcharacteristic or not.
Note that it is decidable if a finite set of words $\mathbf{s}$ satisfies the conditions of Lemma \ref{lem:malcharlem} (hence, membership of the class $\mathcal{C}$ is decidable).
Note also that the subsemigroup $\{a^2, a^3, b^2, b^3\}^{{+}}$ of $F(a, b)$ consists precisely of the freely reduced words $W$ in $F(a, b)$ which do not contain $a^{-1}$ or $b^{-1}$ and such that every instance of $a$ in $W$ is part of an $a^2$-term and every instance of $b$ in $W$ is part of a $b^2$-term.
Therefore, in Lemma \ref{lem:malcharlem} every instance of $a$ in any circuit in the folded Stallings' graph $\widehat{\Gamma}_{\mathbf{s}}$ is part of an $a^2$-term, and similarly for $b$.

\begin{lemma}
\label{lem:malcharlem}
Let $\mathbf{s}$ be a finite set of pairwise non-equal words contained in the subsemigroup $\{a^2, a^3, b^2, b^3\}^{{+}}$ of the free group $F(a, b)$, and write $C:=\langle \mathbf{s}\rangle$. Suppose that every circuit in the folded Stalling's graph $\widehat{\Gamma}_{\mathbf{s}}$ contains some $a^3$-term and some $b^3$-term. Then $C$ is malcharacteristic in $F(a, b)$ if and only if $C$ is malnormal in $F(a, b)$ and $\alpha(C)\cap C^g=1$ for all non-trivial length-preserving automorphism $\alpha\in\aut(F(a, b))$ and all $g\in F(a, b)$. In particular, it is decidable whether $C$ is malcharacteristic in $F(a, b)$ or not.
\end{lemma}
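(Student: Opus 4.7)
The forward direction is immediate from characterisation~(\ref{list:thirdCharacterisation}) of Lemma~\ref{lem:MalcharCharacterisation}: if $C$ is malcharacteristic then $C$ is malnormal and any $\delta\in\aut(F(a,b))$ with $\delta(C)\cap C\neq 1$ lies in $\inn(F(a,b))$. A direct check shows that the only length-preserving inner automorphism of $F(a,b)$ is the identity, so every non-trivial length-preserving $\alpha$ is non-inner, yielding $\alpha(C)\cap C=1$. The identity $\alpha(C)\cap C^{g}\neq 1\Leftrightarrow\gamma_{g^{-1}}\alpha(C)\cap C\neq 1$, together with the observation that $\gamma_{g^{-1}}\alpha$ is still non-inner, reduces the case of arbitrary $g$ to $g=1$.

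For the backward direction I fix $\delta\in\aut(F(a,b))\setminus\inn(F(a,b))$ and aim to prove $\delta(C)\cap C=1$; by characterisation~(\ref{list:thirdCharacterisation}) together with the assumed malnormality this is enough. Lemma~\ref{lem:PrimitiveForm} splits $\delta$ into three cases. Case~(\ref{item:PrimitiveForm1}) gives $\delta=\alpha\gamma_{h}^{-1}$ with $\alpha$ length-preserving and, since $\delta$ is not inner, non-trivial; the identity $\delta(C)=\alpha(h)\alpha(C)\alpha(h)^{-1}$ rewrites $\delta(C)\cap C=1$ as $\alpha(C)\cap C^{\alpha(h)}=1$, which is precisely the second hypothesis.

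In Cases~(\ref{item:PrimitiveForm2}) and~(\ref{item:PrimitiveForm3}) I pass to the normal form $\delta':=\delta\alpha_{1}\gamma$ (or $\alpha_{2}\delta\alpha_{1}\gamma$) and analyse the shape of freely reduced words. Because $\delta'(a)$ and $\delta'(b)$ lie in $\{ab^{m},ab^{m+1}\}^{+}$ (respectively $\{ab^{m},b\}^{+}$ with $m\neq 0$), every freely reduced word in $\delta'(F(a,b))$ has a rigid syllable structure in which each occurrence of $a^{\pm 1}$ is flanked by $b^{\pm 1}$-blocks of prescribed length, so certain short subwords (notably $a^{3}$ and $b^{3}$, and their inverses) cannot appear. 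On the $C$ side, a non-trivial element of $C$ spells a non-trivial reduced closed path in the folded Stallings' graph $\widehat\Gamma_{\mathbf s}$; combining $\mathbf{s}\subset\{a^{2},a^{3},b^{2},b^{3}\}^{+}$ with the hypothesis that every circuit of $\widehat\Gamma_{\mathbf{s}}$ contains an $a^{3}$-term and a $b^{3}$-term, one shows that the spelling of any such element contains exactly the subwords ruled out on the $\delta'$ side. Hence $\delta'(C)\cap C=1$, and the length-preserving and inner adjustments transfer this back to $\delta(C)\cap C=1$. Decidability then follows from the fibre-product criteria: malnormality of $C$ is certified by $\widehat\Gamma_{\mathbf s}\otimes\widehat\Gamma_{\mathbf s}$ having forest non-diagonal components; there are only eight length-preserving automorphisms, and for each non-trivial $\alpha$ one checks that $\widehat\Gamma_{\alpha(\mathbf{s})}\otimes\widehat\Gamma_{\mathbf{s}}$ is a forest.

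The main obstacle is the structural step in Cases~(\ref{item:PrimitiveForm2}) and~(\ref{item:PrimitiveForm3}): extracting from the circuit hypothesis a uniform forbidden subword that must appear in every non-trivial element of $C$ and simultaneously fails to appear in any element of $\delta'(F(a,b))$. Delicate book-keeping is required to control the free cancellation arising when substituting $\delta'(a),\delta'(b)$ into a reduced word, particularly for small values of $|m|$ where the syllable constraint is weakest.
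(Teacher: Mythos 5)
Your skeleton is the paper's: the forward direction via Lemma~\ref{lem:MalcharCharacterisation}.(\ref{list:thirdCharacterisation}) (with the observation that no non-trivial length-preserving automorphism is inner), the trichotomy of Lemma~\ref{lem:PrimitiveForm} for the converse, Case~(\ref{item:PrimitiveForm1}) absorbed by the hypothesis, and decidability from the finitely many length-preserving automorphisms together with fibre products. The gap is that the one step carrying the actual content of the lemma --- Cases~(\ref{item:PrimitiveForm2}) and~(\ref{item:PrimitiveForm3}) --- is aimed at the wrong object and then explicitly left unproved. You propose to show that every freely reduced word in $\delta'(F(a,b))$ omits $a^{3}$ and $b^{3}$; but $\delta'$ is an automorphism, so $\delta'(F(a,b))=F(a,b)$ and nothing is forbidden (concretely, in Case~(\ref{item:PrimitiveForm2}) one has $\delta'(ab^{-m}ab^{-m}a)=a^{3}b^{m}$). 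The object to analyse is $\delta'(\mathbf{s})$, equivalently the circuits of $\widehat{\Gamma}_{\delta(\mathbf{s})}$, and this is exactly where the hypothesis $\mathbf{s}\subseteq\{a^{2},a^{3},b^{2},b^{3}\}^{+}$ must be used on the $\delta$-side and not only on the $C$-side: in Case~(\ref{item:PrimitiveForm2}) it gives $\delta'(\mathbf{s})\subseteq\{(ab^{m})^{2},(ab^{m})^{3},b^{2},b^{3}\}^{+}\subseteq\{ab^{m}a,(ab^{m})^{2}a,b\}^{+}$, a subsemigroup whose generators concatenate without free cancellation and none of whose elements contains an $a^{3}$-term; Case~(\ref{item:PrimitiveForm3}) is analogous, with $a^{3}$ excluded for $m\neq 0,-1$ and $b^{3}$ excluded for $m\in\{0,-1\}$. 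Once the argument is pointed at $\delta'(\mathbf{s})$ rather than $\delta'(F(a,b))$, the ``delicate book-keeping to control free cancellation'' that you name as the main obstacle simply does not arise, because you only ever substitute $\delta'(a),\delta'(b)$ into the positive words of $\mathbf{s}$.

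Since you declare that structural step an unresolved obstacle, the proposal is incomplete precisely where the lemma is non-trivial. To close it, argue as the paper does: deduce that no circuit of the folded graph of $\alpha_{2}\delta\alpha_{1}\gamma(\mathbf{s})$ contains an $a^{3}$-term, transfer this back through $\gamma$ (inner, so it only conjugates the subgroup and leaves the circuits unchanged) and through $\alpha_{1},\alpha_{2}$ (length-preserving, so they permute $a^{\pm3}$- and $b^{\pm3}$-terms), concluding that every circuit of $\widehat{\Gamma}_{\delta(\mathbf{s})}$ misses either all $a^{3}$-terms or all $b^{3}$-terms. Since every circuit of $\widehat{\Gamma}_{\mathbf{s}}$ contains both, the fibre product $\widehat{\Gamma}_{\delta(\mathbf{s})}\otimes\widehat{\Gamma}_{\mathbf{s}}$ is a forest, which gives $\delta(C)\cap C^{g}=1$ for all $g$ at once; combined with malnormality and Lemma~\ref{lem:MalcharCharacterisation}.(\ref{list:thirdCharacterisation}) this yields the malcharacteristic property.
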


\begin{proof}
%
First, suppose that $C$ is malnormal in $F(a, b)$ and that $\alpha(C)\cap C^g=1$ for all length-preserving automorphisms $\alpha\in\aut(F(a, b))$ and all $g\in F(a, b)$. We show that for all non-inner $\delta\in\aut(F(a, b))$ the fibre product of the folded Stallings' graphs $\widehat{\Gamma}_{\delta(\mathbf{s})}\otimes\widehat{\Gamma}_{\mathbf{s}}$ is a forest; as $C$ is malnormal this implies that $C$ is malcharacteristic in $F(a, b)$. To show that this condition holds for all such $\delta\in\aut(F(a, b))$ we consider the three cases of Lemma \ref{lem:PrimitiveForm}.
The result holds by assumption if $\delta\in\aut(F(a, b))$ is such that Case (\ref{item:PrimitiveForm1}) of Lemma \ref{lem:PrimitiveForm} holds.


Now, suppose $\delta\in \aut(F(a, b))$ is such that Case (\ref{item:PrimitiveForm2}) of Lemma \ref{lem:PrimitiveForm} holds, so $\alpha_2\delta\alpha_1\gamma(a)=ab^m$, $\alpha_2\delta\alpha_1\gamma(b)=b$ for some $m\in\mathbb{Z}\setminus\{0\}$, $\gamma\in\inn(F(a, b))$ and $\alpha_1, \alpha_2\in\aut(F(a, b))$ length-preserving. Then
\begin{align*}
\alpha_2\delta\alpha_1\gamma(\mathbf{s})&\subseteq\{(ab^m)^2, (ab^m)^3, b^2, b^3\}^{{+}}\\&\subseteq\{ab^ma, (ab^m)^2a, b\}^{{+}}=:\mathbf{s}_1.
\end{align*}
However, no element of $\mathbf{s}_1$ contains an $a^3$-term. Hence no circuit in the folded Stallings' graph $\widehat{\Gamma}_{\alpha_2\delta\alpha_1\gamma(\mathbf{s})}$ contains an $a^3$-term, and as $\gamma$ is inner the same holds for $\Gamma_{\alpha_2\delta\alpha_1(\mathbf{s})}$. As $\alpha_1, \alpha_2$ are length-preserving, we further have that either no circuit in $\widehat{\Gamma}_{\delta(\mathbf{s})}$ contains an $a^3$-term or no circuit in $\widehat{\Gamma}_{\delta(\mathbf{s})}$ contains a $b^3$-term. Therefore, the fibre product $\widehat{\Gamma}_{\delta(\mathbf{s})}\otimes\Gamma_{\mathbf{s}}$ is a forest, as required.

Finally, suppose $\delta\in \aut(F(a, b))$ is such that Case (\ref{item:PrimitiveForm3}) of Lemma \ref{lem:PrimitiveForm} holds, so $\delta\alpha_1\gamma(a), \delta\alpha_1\gamma(b)\in\{ab^m, ab^{m+1}\}^{{+}}$ for some $m\in\mathbb{Z}$, $\gamma\in\inn(F(a, b))$ and $\alpha_1\in\aut(F(a, b))$ length-preserving. Then there exist positive words $U_a$ and $U_b$ such that:
\begin{align*}
\delta\alpha_1\gamma(\mathbf{s})
&=\{U_a(ab^m, ab^{m+1})^2, U_a(ab^m, ab^{m+1})^3, U_b(ab^m, ab^{m+1})^2, U_b(ab^m, ab^{m+1})^3\}\\
&\subseteq\{ab^m, ab^{m+1}\}^{{+}}=:\mathbf{s}_2.
\end{align*}
If $m\neq0, -1$ then no element of $\mathbf{s}_2$ contains an $a^3$-term. If $m=0$ or $m=-1$ then $\mathbf{s}_2=\{a, ab^{\epsilon}\}^{{+}}$, $\epsilon=\pm1$, and so no element of $\mathbf{s}_2$ contains a $b^3$-term. Hence, no circuit in the folded Stallings' graph $\widehat{\Gamma}_{\delta\alpha_1\gamma(\mathbf{s})}$ contains an $a^3$-term or no circuit in $\widehat{\Gamma}_{\delta\alpha_1\gamma(\mathbf{s})}$ contains a $b^3$-term. Therefore, as $\gamma$ is inner and as $\alpha_1$ is length-preserving, either no circuit in $\widehat{\Gamma}_{\delta(\mathbf{s})}$ contains an $a^3$-term or no circuit in $\widehat{\Gamma}_{\delta(\mathbf{s})}$ contains a $b^3$-term. Hence, the fibre product $\widehat{\Gamma}_{\delta(\mathbf{s})}\otimes\Gamma_{\mathbf{s}}$ is a forest, as required.


The opposite direction is trivial. For the decidability statement, recall that we can use fibre products of Stallings' graphs to determine if $C$ is malnormal in $F(a, b)$ or not, and also to determine for each non-trivial length-preserving automorphism $\alpha$ whether $\alpha(C)\cap C^g=1$ for all $g\in F(a, b)$ or not. The decidability statement then follows because there are only finitely many length-preserving automorphisms $\alpha\in\aut(F(a, b))$.
%
\end{proof}

\p{Examples of malcharacteristic subgroups}
Lemma \ref{lem:malcharFree} now gives an example of a malcharacteristic subgroup $L$ of $F(a, b)$.

\begin{lemma}
\label{lem:malcharFree}
Let $L$ be the subgroup of $F(a, b)$ which is generated by the following elements, with $\rho\gg1$.
\begin{align*}
\omega_{x}&:=a^{3}b^{3}a^3b^4\ldots a^3b^{\rho+2}\\
\omega_{y}&:=a^3b^{\rho+3}a^3\ldots a^{3}b^{2\rho+2}
\end{align*}
Then $L$ is a non-cyclic malcharacteristic subgroup of $F(a, b)$.
\end{lemma}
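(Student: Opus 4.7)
The plan is to apply Lemma \ref{lem:malcharlem} to $\mathbf{s} := \{\omega_x, \omega_y\}$. Each generator is a concatenation of $a^3$-blocks and $b^k$-blocks with $k \geq 3$, so $\mathbf{s} \subseteq \{a^2, a^3, b^2, b^3\}^{{+}}$, and $\omega_x \neq \omega_y$. The folded Stallings graph $\widehat{\Gamma}_{\mathbf{s}}$ is obtained from the bouquet of two loops by identifying only the initial arc labeled $a^3 b^3$ at the basepoint (after which $\omega_x$ takes an $a$-edge while $\omega_y$ takes a $b$-edge) and the terminal arc labeled $b^{\rho+2}$ entering the basepoint (before which $\omega_x$ has an $a$-edge while $\omega_y$ still has a $b$-edge). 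No further identification is possible in the two ``petals'' in between, so $\widehat{\Gamma}_{\mathbf{s}}$ has rank two and every essential circuit traverses at least one full $a^3$-block and one full $b^k$-block with $k \geq 3$, hence contains an $a^3$-term and a $b^3$-subpath as required. Rank two also gives non-cyclicity of $L$.

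Next I would verify that $L$ is malnormal in $F(a,b)$, either by a direct fibre-product calculation or, preferably, by showing that $\{\omega_x, \omega_y\}$ is a $C^{\prime}(1/6)$ small-cancellation set for $\rho \gg 1$ and invoking Theorem \ref{thm:FreeMalnormMetric} with $\mathbf{r} = \emptyset$. The small-cancellation estimate rests on the key feature that the $b$-exponents $3, 4, \ldots, 2\rho+2$ appearing along $\omega_x$ and $\omega_y$ are pairwise distinct: any piece long enough to span two complete $b$-runs would force two distinct exponents to coincide, giving a bound on piece length of order $O(\rho)$ against the $\Theta(\rho^2)$ total length of each generator.

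Finally I would handle each of the seven non-trivial length-preserving automorphisms $\alpha \in \aut(F(a,b))$. When $\alpha$ inverts $a$ or $b$ without swapping, the reduced forms of $\alpha(\omega_x), \alpha(\omega_y)$ contain inverse letters absent from $\omega_x, \omega_y$, so no non-trivial cyclically reduced word can label a circuit in both $\widehat{\Gamma}_{\mathbf{s}}$ and $\widehat{\Gamma}_{\alpha(\mathbf{s})}$, making the fibre product a forest. When $\alpha$ swaps $a$ and $b$ (possibly composed with sign changes), the graph $\widehat{\Gamma}_{\alpha(\mathbf{s})}$ has $b$-blocks of length exactly $3$ and $a$-blocks of widely varying lengths, while $\widehat{\Gamma}_{\mathbf{s}}$ has the opposite pattern; common matchings in the fibre product are therefore forced onto alternating $a^3 b^3$-segments, and since the $b$-exponents in $\omega_x$ and $\omega_y$ strictly exceed $3$ after their initial block, no long $a^3 b^3 a^3 b^3$-subpath exists in $\widehat{\Gamma}_{\mathbf{s}}$ beyond the short shared initial arc. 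This prevents any non-trivial common circuit from closing, and the fibre product is again a forest.

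The main obstacle is the swap case above: here both graphs contain $a^3$- and $b^3$-runs, so the forest property cannot be extracted from incompatibility of signs or block lengths alone and must be squeezed from the subtler rigidity coming from strictly increasing pairwise-distinct $b$-exponents. The calibrated design of $\omega_x$ and $\omega_y$---constant $a^3$-blocks against a strictly increasing sequence of distinct $b$-blocks---is precisely what makes this case collapse.
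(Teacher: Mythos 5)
Your setup matches the paper's: reduce to Lemma \ref{lem:malcharlem}, get malnormality and non-cyclicity from the small cancellation property of $\{\omega_x,\omega_y\}$ (the paper cites Wise's $c(5)$ condition directly, which is what Theorem \ref{thm:FreeMalnormMetric} with $\mathbf{r}=\emptyset$ amounts to), and then check the finitely many length-preserving automorphisms via fibre products. Your treatment of the swap case is also sound, and arguably cleaner than the paper's: a common circuit label would need all $b$-runs of length at most $3$ to be readable in $\widehat{\Gamma}_{\alpha(\mathbf{s})}$, while in $\widehat{\Gamma}_{\mathbf{s}}$ the only $b$-arc of length $3$ is the shared initial one, so the usable subgraph is a forest.

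The genuine gap is in your first case, the non-swapping sign changes, which you dismiss with a sign-parity argument: ``the reduced forms of $\alpha(\omega_x),\alpha(\omega_y)$ contain inverse letters absent from $\omega_x,\omega_y$, so no non-trivial cyclically reduced word can label a circuit in both graphs.'' This is false as stated. Circuits in a folded Stallings graph may traverse arcs in either direction, so circuits in $\widehat{\Gamma}_{\mathbf{s}}$ are not labelled only by positive words (consider the cyclic reduction of $\omega_x\omega_y^{-1}$), and circuits in $\widehat{\Gamma}_{\alpha(\mathbf{s})}$ are not confined to the letters appearing in $\alpha(\omega_x),\alpha(\omega_y)$. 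The failure is starkest for $\alpha\colon a\mapsto a^{-1},\ b\mapsto b^{-1}$: here $\alpha(\omega_x)^{-1}=\cdots a^3b^{p+1}a^3b^pa^3\cdots$ is a \emph{positive} word, so there is no sign obstruction whatsoever, and both graphs contain circuits reading long positive alternating $a^3b^k$-stretches with the same multiset of run lengths. What actually kills this case (and is the content of the paper's argument) is the \emph{order} of the exponents: any common circuit must contain a coherent subpath $a^3b^qa^3b^{q+1}a^3$ with \emph{increasing} consecutive exponents coming from $\widehat{\Gamma}_{\mathbf{s}}$, and since $\alpha(\mathbf{s})$ is $C'(\lambda)$ with $\lambda$ arbitrarily small, this subword must embed (up to crossing at most one junction) in a single $\alpha(\omega_x)^{\pm1}$ or $\alpha(\omega_y)^{\pm1}$, where consecutive exponents are either negatively signed or appear in \emph{decreasing} order. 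Without this coherent-subpath-plus-small-cancellation step your first case does not close, and you have in fact located the hard case in the wrong place: the swap case yields to run-length counting, while the sign-change case is where the increasing-exponent design of $\omega_x,\omega_y$ is genuinely needed.
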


\begin{proof}
First, note that $\mathbf{s}=\left\{\omega_{x}, \omega_{y}\right\}\subseteq\{a^2, a^3, b^2, b^3\}^{{+}}$, and that every circuit in the folded Stalling's graph $\widehat{\Gamma}_{\mathbf{s}}$ contains some $a^3$-term and some $b^3$-term. Therefore, we may apply Lemma \ref{lem:malcharlem}. Now, the words $\omega_{x}$ and $\omega_{y}$ satisfy Wise's $c(5)$ small cancellation condition for $\rho \gg1$, and so the subgroup $L$ is non-cyclic and malnormal in $F(a, b)$ \cite[Theorems~2.11~\&~2.14]{wise2001residual}.

Suppose that $\alpha\in\aut(F(a, b))$ is non-trivial and length-preserving, and that $\alpha(L)\cap L^g\neq1$ for some $g\in F(a, b)$. Then the fibre product $\Gamma_{\alpha(\mathbf{s})}\otimes\Gamma_{\mathbf{s}}$ is not a forest. If we pick a circuit in this fibre product $\Gamma_{\alpha(\mathbf{s})}\otimes\Gamma_{\mathbf{s}}$ then this circuit contains a subpath with label $U=(a^3b^pa^3b^{p+1}a^3b^{p+2}a^3b^{p+3}a^3b^{p+4}a^3)^{\epsilon}$, where $p>3$ and $\epsilon=\pm1$, as every circuit in $\Gamma_{\mathbf{s}}$ contains a subpath with a label of this form. By traversing our chosen circuit in the opposite direction if necessary, we may assume $\epsilon=1$.

The set $\alpha(\mathbf{s})$ is $C^{\prime}(\lambda)$ small cancellation, with $\lambda$ arbitrarily small (as $\rho\gg 1$). Therefore, $U$ is either a subword of an element of $\alpha(\mathbf{s})^{\pm1}$ (so of $\alpha\left(\omega_{x}\right)^{\pm1}$ or of $\alpha\left(\omega_{y}\right)^{\pm1}$), or a subword of the product of two elements of $\alpha(\mathbf{s})^{\pm1}$. Hence a subword $a^3b^qa^3b^{q+1}a^3$ of $U$ is a subword of either $\alpha\left(\omega_{x}\right)^{\pm1}$ or of $\alpha\left(\omega_{y}\right)^{\pm1}$, $q>3$. It is then clear that $\alpha$ is trivial (one way to see this is to note that $\alpha^{-1}$ is length-preserving and then to apply each non-trivial length-preserving automorphism to $a^3b^qa^3b^{q+1}a^3$ and to $a^3b^qa^3b^{q+1}a^3$; the result is never a subword of $\omega_{x}$ nor of $\omega_{y}$). This is a contradiction, so $\alpha(L)\cap L^g=1$ for all $g\in F(a, b)$ as required.
\end{proof}

The subgroup $\widetilde{M}=\langle x, y\rangle$ of $F(a, b)$ mentioned at the start of this section is in the same $\aut(F(a, b))$-orbit as $L$. Hence, by Lemma \ref{lem:malcharFree}, $\widetilde{M}$ is malcharacteristic in $F(a, b)$. In Lemma \ref{lem:malcharTriange} we use this fact to prove that the subgroup $M=\langle x, y\rangle$ of $T_{i, j, k}$ is malcharacteristic in $T_{i, j, k}$.


Lemmas \ref{lem:malcharArbRank} and \ref{lem:malcharFree} combine to prove the following result.

\begin{proposition}
\label{lem:malcharFreeArbRank}
The free group $F(a, b)$ contains malcharacteristic subgroups $L_n$ of arbitrary rank $n$ (possibly countably infinite).
\end{proposition}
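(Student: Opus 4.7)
The plan is to combine the two preceding lemmas in the obvious way: use Lemma \ref{lem:malcharFree} to provide a concrete malcharacteristic ``seed'' subgroup of $F(a,b)$ of rank two, and then invoke the bootstrap provided by Lemma \ref{lem:malcharArbRank} to get malcharacteristic subgroups of every rank $n\in\mathbb{N}\cup\{\infty\}$.

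First I would verify that the subgroup $L=\langle\omega_x,\omega_y\rangle$ produced by Lemma \ref{lem:malcharFree} is free of rank exactly two. Being a subgroup of a free group, $L$ is free by Nielsen--Schreier; it has a two-element generating set, so its rank is at most two; and Lemma \ref{lem:malcharFree} asserts that $L$ is non-cyclic, so its rank is exactly two. Moreover, Lemma \ref{lem:malcharFree} asserts that $L$ is malcharacteristic in $F(a,b)$.

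Second I would apply Lemma \ref{lem:malcharArbRank} with ambient group $H:=F(a,b)$ and seed subgroup $M:=L$ of rank $m=2>1$. The conclusion of that lemma is precisely that for every $n\in\mathbb{N}\cup\{\infty\}$ the group $F(a,b)$ contains a malcharacteristic subgroup $L_n$ which is free of rank $n$, which is the desired statement.

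There is no real obstacle here: the substantial work has already been carried out, namely the small cancellation / Stallings' fibre product analysis in Lemma \ref{lem:malcharFree} (which used Lemma \ref{lem:malcharlem} and the normal form of Lemma \ref{lem:PrimitiveForm}), and the general ``bootstrap from rank two'' argument in Lemma \ref{lem:malcharArbRank} (which uses Lemma \ref{lem:malnormLEQmalchar} to pass from malnormal subgroups of the free group of rank two to malcharacteristic subgroups of the ambient group). The proof of the proposition is therefore a one-line deduction from these two lemmas.
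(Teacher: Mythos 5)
Your proposal is correct and is exactly the paper's argument: the paper states Proposition \ref{lem:malcharFreeArbRank} with the one-line remark that Lemmas \ref{lem:malcharArbRank} and \ref{lem:malcharFree} combine to prove it, which is precisely your deduction (seed subgroup $L$ from Lemma \ref{lem:malcharFree}, bootstrap via Lemma \ref{lem:malcharArbRank}). Your additional check that $L$ has rank exactly two, using non-cyclicity from Lemma \ref{lem:malcharFree}, is a sensible piece of due diligence that the paper leaves implicit.
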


Indeed, the subgroup $L_n$ is generated by $n$ finite subwords satisfying $C^{\prime}(1/6)$ of the following infinite word:
\[
\omega_{x}\omega_{y}\left(\omega_{x}\omega_{y}^2\right)\omega_{x}\omega_{y}\left(\omega_{x}\omega_{y}^2\right)^2\omega_{x}\omega_{y}\left(\omega_{x}\omega_{y}^2\right)^3\cdots.
\]


\section{Malnormal subgroups from small cancellation presentations}
\label{sec:MalnormAspher}
The main results of this section are Theorems \ref{thm:FreeMalnormMetric6} and \ref{thm:FreeMalnormMetric4T4}. We also prove Theorem \ref{thm:FreeMalnormMetric}, which is a special case of these two theorems. Theorem \ref{thm:FreeMalnormMetric4T4} is applied in Section \ref{sec:MalcharTriangle} to prove that the subgroup $M$ of $T_{i, j, k}$, $i, j, k\geq6$, as stated at the end of Section \ref{sec:Malchar}, is a free and malcharacteristic subgroup of $T_{i, j, k}$; Theorem \ref{thm:intro1} and Theorem \ref{thm:GeneralTriangle1} then follow by Theorem \ref{thm:mainconstruction}.

The results of this section lift conjugation in groups $\langle \mathbf{x}; \mathbf{r}\rangle$ to conjugation in the ambient free group $F(\mathbf{x})$.
Our principal tool here is small cancellation theory (see \cite[Section V]{L-S} for the basic definitions and results), and our key innovation is to prove results about the subgroup $\langle \mathbf{s}\rangle_G$ of the group $G=\langle \mathbf{x}; \mathbf{r}\rangle$ by studying the presentation $\langle \mathbf{x}; \mathbf{r}, \mathbf{s}\rangle$.
Theorems \ref{thm:FreeMalnormMetric6} and \ref{thm:FreeMalnormMetric4T4} give a new method for recognising, for $L$ and $M$ subgroups of a small cancellation group $G$, if $L^g\cap M= 1$ for all $g\in G$.

Theorem \ref{thm:FreeMalnormMetric6} is not applied in this paper. It is included for the sake of completeness, and because we believe that Theorem \ref{thm:FreeMalnormMetric} and Theorems \ref{thm:FreeMalnormMetric6} and \ref{thm:FreeMalnormMetric4T4} are of general interest. These results can be viewed as a generalisation of results of Wise, who applied small cancellation conditions on graphs to give a method of recognising malnormal subgroups of free groups \cite{wise2001residual}. We conclude this section by noting that our proofs can be adapted to Wise's graphical small cancellation setting.




\subsection{Free subgroups of small cancellation groups}
We start with Lemma \ref{lem:diagramD_U}, which describes how to view elements of the subgroup $\langle\mathbf{s}\rangle$ of $\langle\mathbf{x}; \mathbf{r}\rangle$ as diagrams over the presentation $\langle \mathbf{x}; \mathbf{r}, \mathbf{s}\rangle$. We then apply this view to prove Lemma \ref{lem:FreeSbgps}, below, which gives a new method for recognising free subgroups of small cancellation groups.

If $\mathbf{r}$ is a subset of $F(\mathbf{x})$ then we write $\mathbf{r}^{\ast}$ for the symmetrised closure of $\mathbf{r}$, that is, $\mathbf{r}^{\ast}$ is the set of all cyclic shifts, inverses, and cyclic shifts of inverses of elements of $\mathbf{r}$.
By an $\mathbf{r}^{\ast}$-diagram we mean a diagram over the presentation $\langle \mathbf{x};\mathbf{r}\rangle$. Note that an $\mathbf{r}^{\ast}$-diagram is also an $(\mathbf{r}\cup \mathbf{s})^{\ast}$-diagram. By an $\mathbf{r}^{\ast}$-disk diagram we mean a diagram $\Delta$ where $\mathbb{R}\setminus\Delta$ has exactly one component. By an \emph{$\mathbf{r}^{\ast}$-region $\mathcal{R}$} we mean an $\mathbf{r}^{\ast}$-diagram which corresponds to a single relator $R\in\mathbf{r}^{\ast}$. We call $\mathcal{R}$ a \emph{region} if the set of relators is understood.
By an $\mathbf{r}^{\ast}$-piece we mean a piece relative to the set $\mathbf{r}^{\ast}$.
%
%
A word $W\in F(\mathbf{x})$ is \emph{cyclically reduced} if every cyclic shift of $W$ is freely reduced.

\begin{lemma}
\label{lem:diagramD_U}
Let $\langle \mathbf{x}; \mathbf{r}, \mathbf{s}\rangle$ be a $C(m)$ presentation, $m\geq3$. For every word $U=S_1^{\epsilon_1}\cdots S_{n}^{\epsilon_n}$, $S_i\in\mathbf{s}$, there exists an $\mathbf{s}^{\ast}$-disk diagram $\mathcal{D}_U$ with boundary labels precisely the cyclically reduced cyclic shifts of $U$, and which has $n$ $\mathbf{s}^{\ast}$-regions $\mathcal{S}_i$, $1\leq i\leq n$.
For each such region $\mathcal{S}_i$ the intersection of $\mathcal{S}_i$ with the boundary $\partial \mathcal{D}_U$ is connected and contains an edge, so $p_i:=\mathcal{S}_i\cap\partial \mathcal{D}_U$ is a path. Each such path $p_i$ is the product of at least $m-2$ $(\mathbf{r}\cup\mathbf{s})^{\ast}$-pieces.
\end{lemma}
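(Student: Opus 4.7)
My plan is to construct $\mathcal{D}_U$ by starting from a ``cactus'' of $n$ canonical $\mathbf{s}^{\ast}$-regions and then reducing its boundary. For each factor $S_i^{\epsilon_i}$ of $U$ I take a closed disk $\mathcal{S}_i^{0}$ whose boundary reads the element $S_i^{\epsilon_i}\in\mathbf{s}^{\ast}$, and wedge these $n$ disks at a common base vertex in the cyclic order given by $U$, producing an $\mathbf{s}^{\ast}$-diagram $\mathcal{D}_{0}$ with $n$ regions and boundary label exactly $U$. Under the harmless assumption that each $S_i$ is cyclically reduced, the only free cancellations on $\partial\mathcal{D}_{0}$ occur at the $n$ junctions between successive regions; at each such junction I perform the maximal boundary fold, identifying the largest common matching between the suffix of the label of $\mathcal{S}_i^{0}$ and the inverse of the prefix of the label of $\mathcal{S}_{i+1}^{0}$. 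The resulting diagram $\mathcal{D}_U$ has cyclically reduced boundary, and the cyclic rotations of this boundary word (over the choice of basepoint on $\partial\mathcal{D}_U$) are precisely the cyclically reduced cyclic shifts of $U$.

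Next I verify that $\mathcal{D}_U$ genuinely has $n$ regions. By its definition as a maximal matching, the fold at the junction of $\mathcal{S}_i$ and $\mathcal{S}_{i+1}$ produces a single interior arc $e_{i}$ which is a common subword of the two distinct elements $S_i^{\epsilon_i}$ and $(S_{i+1}^{\epsilon_{i+1}})^{-1}$ of $\mathbf{s}^{\ast}$, and so is a single $(\mathbf{r}\cup\mathbf{s})^{\ast}$-piece. Each region $\mathcal{S}_i$ of $\mathcal{D}_U$ then has cyclic boundary of the form $e_{i-1}\cdot p_i\cdot e_i$ with $p_i:=\mathcal{S}_i\cap\partial\mathcal{D}_U$. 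If some $\mathcal{S}_i$ were degenerate, that is, if the corresponding $p_i$ were empty, then $S_i^{\epsilon_i}$ would factor as the product of the two pieces $e_{i-1}$ and $e_i$, contradicting the hypothesis that $\langle\mathbf{x};\mathbf{r},\mathbf{s}\rangle$ is $C(m)$ with $m\geq 3$. This shows simultaneously that all $n$ regions survive, that each $p_i$ is non-empty and therefore contains an edge, and that $p_i$ is connected (since the two interior arcs $e_{i-1}$ and $e_i$ sit at opposite ends of the cyclic word $S_i^{\epsilon_i}$).

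The quantitative statement is then obtained by the same counting: if $p_i$ is written as a product of $k$ $(\mathbf{r}\cup\mathbf{s})^{\ast}$-pieces then appending $e_{i-1}$ and $e_i$ gives a piece-decomposition of $S_i^{\epsilon_i}$ with $k+2$ terms, so the $C(m)$-hypothesis forces $k+2\geq m$, i.e., $k\geq m-2$.

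I expect the main obstacle to be the bookkeeping around the folding step: one must check that the maximal fold at each junction really produces a single interior arc that is a single $(\mathbf{r}\cup\mathbf{s})^{\ast}$-piece (and not a concatenation of smaller common matchings separated by non-matching edges) and that folding stays local to adjacent regions, so that no pair of non-adjacent $\mathcal{S}_i^{0}$'s is inadvertently identified as cancellation propagates around the boundary. Once these planarity and matching checks are in place, the piece-counting argument against $C(m)$ is immediate.
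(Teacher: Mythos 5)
Your proposal is correct and follows essentially the same route as the paper: form the bouquet of $n$ petals labelled by the $S_i^{\epsilon_i}$, fold the cancelling pieces between adjacent petals into single interior arcs, and count pieces against the $C(m)$ condition. Your write-up is in fact more explicit than the paper's (which compresses the counting into ``applying the $C(m)$ condition''), correctly isolating that each interior arc is a single piece and that an empty $p_i$ would exhibit $S_i^{\epsilon_i}$ as a product of two pieces, contradicting $m\geq3$.
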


\begin{proof}
First form the bouquet of $n$ petals labeled clockwise with the words $S_1, \ldots, S_n$ respectively (different petals may have the same label).
This defines, in the obvious way, an $\mathbf{s}^{\ast}$-disk diagram $\Delta_U$ with $n$ regions $\mathcal{S}_i$ such that $S_i$ is a label of $\mathcal{S}_i$. Form the $\mathbf{s}^{\ast}$-disk diagram $\mathcal{D}_U$ by taking $\Delta_U$ and folding small-cancellation pieces in adjacent petals together, that is, perform certain Stallings' folds on the edges of $\Delta_U$.
Applying the $C(m)$ condition, $p_i:=\mathcal{S}_i\cap\partial\mathcal{D}_U$ is connected and contains at least $m-2$ $(\mathbf{r}\cup\mathbf{s})^{\ast}$-pieces.
Clearly some, and hence every cyclically reduced cyclic shift of $U$ is a label of $\partial \mathcal{D}_U$, and the result follows.
\end{proof}

Suppose that two diagrams $\Delta_1$ and $\Delta_2$ have a paths on their boundaries with a common label, so there exist paths $I_1\subset\partial\Delta_1$ and $I_2\subset\partial\Delta_2$ such $I_1$ and $I_2$ have the same label. Then we may form a new diagram $\Delta=\Delta_1\cup\Delta_2$ where we equate the paths $I_1$ and $I_2$. We call the image $I\subset\Delta$ of the paths $I_1$ and $I_2$ the \emph{attaching path of $\Delta$ over $\Delta_1$ and $\Delta_2$}, or just the \emph{attaching path} if $\Delta$, $\Delta_1$ and $\Delta_2$ are understood.

If $I$ is a path in a diagram then we use $\iota(I)$ and $\tau(I)$ to denote, respectively, the initial and terminal vertex of $I$. If $I$ and $J$ are paths with $\tau(I)=\iota(J)$ then we write $IJ$ for the concatenation of the paths $I$ and $J$. A path $J^{\iota}$ is an \emph{initial subpath of the path $J$} if there exists a path $K$ with $\tau(J^{\iota})=\iota(K)$ and $J^{\iota}K=I$. Similarly, a path $J^{\tau}$ is a \emph{terminal subpath of the path $J$} if there exists a path $K$ with $\tau(K)=\iota(J^{\tau})$ and $KJ^{\tau}=I$.
If $I$ is a path in the diagram $\Delta$ and $(J_1, \ldots, J_n)$ is a sequence of paths in $\Delta$ with $\tau(J_i)=\iota(J_{i+1})$, $1\leq i<n$, then the path $I$ \emph{straddles the sequence of paths $(J_1, \ldots, J_n)$} if $J_1^{\tau}J_2\cdots J_{n-1}J_n^{\iota}$ is a subpath of $I$, where $J_1^{\tau}$ is a non-empty terminal subpath of $J_1$ and $J_n^{\iota}$ is a non-empty initial subpath of $J_n$.

\begin{lemma}
\label{lem:connectingpathI}
Let $\langle \mathbf{x}; \mathbf{r}, \mathbf{s}\rangle$ be a $C(4)$ presentation and let $U\in F(\mathbf{x})$. Suppose that $\Delta$ is an $(\mathbf{r}\cup \mathbf{s})^{\ast}$-diagram containing the subdiagram $\mathcal{D}_U$, formed as in Lemma \ref{lem:diagramD_U}. If $\mathcal{R}$ is an $\mathbf{r}^{\ast}$-region of $\Delta$ which attaches to $\mathcal{D}_U$ over an attaching path $I$ then $I$ cannot straddle a sequence of three paths $(p_i, p_{i+1\pmod n}, p_{i+2\pmod n})$ of $\mathcal{D}_U$.
\end{lemma}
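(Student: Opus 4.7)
The plan is to argue by contradiction via the piece count already furnished by Lemma \ref{lem:diagramD_U}. Suppose $I$ straddles $(p_i,p_{i+1\pmod n},p_{i+2\pmod n})$ and, to ease notation, write $j:=i+1\pmod n$. By the definition of straddling, the entire middle path $p_j$ is a subpath of $I$. Since $I$ is the attaching path of $\mathcal{R}$ over $\mathcal{D}_U$, the whole arc $p_j$ is, in $\Delta$, a shared boundary between the region $\mathcal{R}$ and the region $\mathcal{S}_j$ of $\mathcal{D}_U$.

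Next I would identify the label of $p_j$ as a single $(\mathbf{r}\cup \mathbf{s})^{\ast}$-piece. The boundary label of $\mathcal{R}$ is some $R\in \mathbf{r}^{\ast}$ and the boundary label of $\mathcal{S}_j$ is some $S_j^{\epsilon_j}\in \mathbf{s}^{\ast}$. Because $\mathbf{r}$ and $\mathbf{s}$ are disjoint in the presentation $\langle \mathbf{x};\mathbf{r},\mathbf{s}\rangle$, these are distinct elements of $(\mathbf{r}\cup \mathbf{s})^{\ast}$. The label of $p_j$ is a common subword of $R$ and $S_j^{\epsilon_j}$, so it is a single $(\mathbf{r}\cup \mathbf{s})^{\ast}$-piece by definition.

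This is the desired contradiction: Lemma \ref{lem:diagramD_U}, applied with $m=4$, guarantees that $p_j$ is the product of at least $4-2=2$ $(\mathbf{r}\cup \mathbf{s})^{\ast}$-pieces, and in particular cannot itself be a single piece. Hence no such attaching path $I$ can straddle three consecutive $p_\bullet$-paths. I anticipate no genuine obstacle; the only subtle point is the observation that containing all of $p_j$ inside a single attaching path exhibits the label of $p_j$ as one piece rather than as a concatenation of shorter pieces, but this is immediate from the disjointness of $\mathbf{r}$ and $\mathbf{s}$ and the fact that $\mathcal{R}$ and $\mathcal{S}_j$ are adjacent across the whole of $p_j$ in $\Delta$.
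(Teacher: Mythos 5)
Your argument is correct and is essentially the paper's own proof: both note that straddling forces the whole middle path $p_{i+1\pmod n}$ into the attaching path $I$, so its label is a common subword of a relator in $\mathbf{r}^{\ast}$ and one in $\mathbf{s}^{\ast}$ and hence a single piece, contradicting the lower bound of $m-2=2$ pieces from Lemma \ref{lem:diagramD_U}. No issues.
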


\begin{proof}
Suppose $I$ straddles the sequence $(p_i, p_{i+1\pmod n}, p_{i+2\pmod n})$. Then $p_{i+1\pmod n}$ consists of a single $(\mathbf{r}\cup\mathbf{s})^{\ast}$-piece as $\mathcal{R}$ corresponds to a single relator $R\in\mathbf{r}^{\ast}$. Now, by taking $m=4$ in Lemma \ref{lem:diagramD_U} we see that $p_{i+1\pmod n}$ cannot be written as the product of fewer than two $(\mathbf{r}\cup\mathbf{s})^{\ast}$-pieces, a contradiction.
\end{proof}

For a word $W\in F(\mathbf{x})$ and a rational number $c$ we write $W>c\mathbf{r}$ to mean that there is a relator $R\in\mathbf{r}^{\ast}$ such that $W$ is a subword of $R$ with $|W|>c|R|$. A word $U\in F(\mathbf{x})$ is called \emph{$\mathbf{r}^{\ast}$-reduced} if $U$ is freely reduced and there is no subword $W$ of $U$ with $W>\frac12\mathbf{r}$. We call $U$ \emph{cyclically $\mathbf{r}^{\ast}$-reduced} if every free reduction of every cyclic shift of $U$ is non-empty and $\mathbf{r}^{\ast}$-reduced. Note that a non-cyclically reduced word may be cyclically $\mathbf{r}^{\ast}$-reduced. We call $W$ \emph{(cyclically) Dehn reduced}, rather than (cyclically) $\mathbf{r}^{\ast}$-reduced, if the underlying set of relators $\mathbf{r}$ is understood.

Note that for $\mathbf{s}\subset F(\mathbf{x})$, if a word $U\in \langle \mathbf{s}\rangle$ is freely/cyclically reduced as a word over $\mathbf{s}^{\pm1}$ then it is not necessarily freely/cyclically reduced over $F(\mathbf{x})$. We therefore have to be careful what we mean by ``free reduction''.
If $\mathbf{s}\subset F(\mathbf{x})$ we shall write $U_{\mathbf{s}}(\mathbf{s})$, $V_{\mathbf{s}}(\mathbf{s})$, and so on, to mean words over the set $\mathbf{s}^{\pm1}$ which are freely reduced over $\mathbf{s}$, and we shall write $U_{\mathbf{x}}(\mathbf{s})$, $V_{\mathbf{x}}(\mathbf{s})$, and so on, to mean words over the set $\mathbf{s}^{\pm1}$ which are freely reduced over $\mathbf{x}$.

If $G$ is given by the generating set $\mathbf{x}$ and $\mathbf{s}\subset F(\mathbf{x})$ then we use $\langle \mathbf{s}\rangle_G$ to denote the subgroup of $G$ generated by the words $\mathbf{s}$.

\begin{lemma}
\label{lem:FreeSbgps}
Let $G$ be given by a $C^{\prime}(1/6)$ or $C^{\prime}(1/4)-T(4)$ presentation $\langle \mathbf{x};\mathbf{r}\rangle$.
Suppose that $\langle \mathbf{x};\mathbf{r}, \mathbf{s}\rangle$ is a $C^{\prime}(1/4)$ presentation. Then every word $U_{\mathbf{s}}(\mathbf{s})$ over $\mathbf{s}$ which is freely reduced over $\mathbf{s}$ is cyclically $\mathbf{r}^{\ast}$-reduced, and hence $\langle \mathbf{s}\rangle_G$ is a free subgroup of $\langle \mathbf{x};\mathbf{r}\rangle$ with basis $\mathbf{s}$.
\end{lemma}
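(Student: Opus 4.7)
The plan is to prove the first assertion by a diagrammatic contradiction, and then deduce the second from the small cancellation hypothesis on $\langle\mathbf{x};\mathbf{r}\rangle$. I would let $U=U_{\mathbf{s}}(\mathbf{s})$ be freely reduced over $\mathbf{s}$ and assume for contradiction that $U$ is not cyclically $\mathbf{r}^{\ast}$-reduced. Since $C^{\prime}(1/4)$ implies $C(4)$, Lemma~\ref{lem:diagramD_U} produces an $\mathbf{s}^{\ast}$-disk diagram $\mathcal{D}_U$ whose boundary carries a cyclically reduced cyclic shift of $U$, decomposed into arcs $p_1,\ldots,p_n$ each a product of at least two $(\mathbf{r}\cup\mathbf{s})^{\ast}$-pieces. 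By the failed reduction, this boundary contains a subword $W$ with $|W|>\frac{1}{2}|R|$ for some $R\in\mathbf{r}^{\ast}$, and I would glue an $\mathbf{r}^{\ast}$-region $\mathcal{R}$ with boundary $R$ onto $\mathcal{D}_U$ along $W$. This produces an $(\mathbf{r}\cup\mathbf{s})^{\ast}$-diagram $\Delta$ in which $\mathcal{R}$ and $\mathcal{D}_U$ share an attaching path $I$ with $|I|=|W|>\frac{1}{2}|R|$.

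Next I would bound $|I|$ from above. By Lemma~\ref{lem:connectingpathI}, $I$ cannot straddle any triple $(p_i,p_{i+1\pmod{n}},p_{i+2\pmod{n}})$, so $I$ is contained in the union of at most two consecutive arcs $p_{i_1},p_{i_2}$ and decomposes as $I=I_1I_2$ with each $I_j$ a (possibly trivial) subpath of $p_{i_j}$. Each non-trivial $I_j$ is a common subword of the boundary of $\mathcal{R}$ (labelled by $R\in\mathbf{r}^{\ast}$) and that of $\mathcal{S}_{i_j}$ (labelled by some $S_{i_j}\in\mathbf{s}^{\ast}$), so the $C^{\prime}(1/4)$ hypothesis on $\langle\mathbf{x};\mathbf{r},\mathbf{s}\rangle$ yields $|I_j|<\frac{1}{4}|R|$. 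Summing gives $|I|<\frac{1}{2}|R|$, contradicting the earlier lower bound and proving the first assertion.

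For the second assertion I would argue that every non-trivial freely-reduced $U_{\mathbf{s}}(\mathbf{s})$ is non-trivial in $G$. The diagram $\mathcal{D}_U$ has $n\geq 1$ regions, and each arc $p_i$ has positive length (containing at least two pieces), so the cyclic reduction $\widehat{U}\in F(\mathbf{x})$ of $U$ is non-empty; by the first assertion $\widehat{U}$ is also cyclically $\mathbf{r}^{\ast}$-reduced, and Greendlinger's Lemma \cite[Section~V]{L-S}, valid under either $C^{\prime}(1/6)$ or $C^{\prime}(1/4)-T(4)$, then forces $\widehat{U}\neq 1$ in $G$. Hence $\mathbf{s}$ is a free basis for $\langle\mathbf{s}\rangle_G$. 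The main obstacle is the piece-counting step in the middle paragraph: one must carefully distinguish pieces arising in the $\mathbf{s}^{\ast}$-decomposition of $\mathcal{D}_U$ from common subwords appearing in the full $(\mathbf{r}\cup\mathbf{s})^{\ast}$-presentation, and apply Lemma~\ref{lem:connectingpathI} correctly to rule out $I$ wrapping further around $\mathcal{D}_U$ than the non-straddling conclusion allows.
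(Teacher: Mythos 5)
Your argument is correct and follows essentially the same route as the paper: build $\mathcal{D}_U$ via Lemma~\ref{lem:diagramD_U}, attach an $\mathbf{r}^{\ast}$-region over a path $I$ with $|I|>\frac12|\partial\mathcal{R}|$, invoke Lemma~\ref{lem:connectingpathI} to see that $I$ decomposes into at most two $(\mathbf{r}\cup\mathbf{s})^{\ast}$-pieces, and use $C^{\prime}(1/4)$ to get $|I|<\frac12|\partial\mathcal{R}|$, a contradiction. Your closing appeal to Dehn's algorithm/Greendlinger for the freeness claim just makes explicit the ``and hence'' step that the paper leaves implicit.
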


\begin{proof}
Suppose that there exists a non-empty word $U:=U_{\mathbf{s}}(\mathbf{s})=S_1^{\epsilon_1}\ldots S_n^{\epsilon_n}$, $S_i\in \mathbf{s}$, which is cyclically reduced as a word over $\mathbf{s}^{\pm1}$ but is not cyclically $\mathbf{r}^{\ast}$-reduced. We obtain a contradiction.

Consider the (non-empty) diagram $\mathcal{D}_U$ given by Lemma \ref{lem:diagramD_U}.
As $U$ is not cyclically $\mathbf{r}^{\ast}$-reduced the small cancellation conditions on the presentation $\langle\mathbf{x}; \mathbf{r}\rangle$ mean that we may attach an $\mathbf{r}^{\ast}$-region $\mathcal{R}$ to the boundary $\partial\mathcal{D}_U$ of the diagram $\mathcal{D}_U$ over an attaching path $I$ such that $|I|>\frac12|\partial\mathcal{R}|$ \cite[Theorems V.4.5~\&~V.4.6]{L-S}.


By Lemma \ref{lem:connectingpathI}, the attaching path $I$ straddles at most two paths $p_i$ and $p_{i+1\pmod n}$ of $\partial\mathcal{D}_U$, and hence the path $I$ contains at most two $(\mathbf{r}\cup \mathbf{s})^{\ast}$-pieces. Therefore, as $\langle \mathbf{x}; \mathbf{r}, \mathbf{s}\rangle$ satisfies $C'(1/4)$ we have that $|I|<\frac24|\partial\mathcal{R}|=\frac12|\partial\mathcal{R}|$, a contradiction.
\end{proof}



\subsection{Conjugacy in small cancellation groups}
We wish to lift conditions on conjugation and intersections of subgroups in $C'(1/6)$ and $C'(1/4)-T(4)$ small cancellation groups $\langle \mathbf{x}; \mathbf{r}\rangle$ to the analogous situation in the ambient free group $F(\mathbf{x})$. We do this in, respectively, Theorems \ref{thm:FreeMalnormMetric6} and \ref{thm:FreeMalnormMetric4T4} below. First we use annular diagrams to describe conjugation in the relevant small cancellation groups.



\p{The conjugacy problem} Groups admitting $C'(1/6)$ or $C'(1/4)-T(4)$ presentations have soluble conjugacy problem, and we state here certain aspects of the classical proof of this result which we use in this section. (For more details of this classical proof see \cite[Section V]{L-S}.)


An \emph{annular $\mathbf{r}^{\ast}$-diagram} $\mathcal{A}$ is an $\mathbf{r}^{\ast}$-diagram such that $\mathbb{R}^2\setminus\mathcal{A}$ has exactly two components. Such a diagram has an \emph{interior boundary} $\partial_i\mathcal{A}$ and an \emph{exterior boundary} $\partial_e\mathcal{A}$. Annular diagrams encode conjugation:
Let $U$ be a label of $\partial_i\mathcal{A}$ read in a clockwise direction starting from a point $a\in\partial_i\mathcal{A}$, let $V$ be a label of $\partial_e\mathcal{A}$ read in a clockwise direction from a point $b\in\partial_e\mathcal{A}$, and let $W$ be the label of a path $I$ in $\mathcal{A}$ which starts at $a$ and ends at $b$ (so $\iota(I)=a$ and $\tau(I)=b$). When $\mathcal{A}$ is split along this path $I$
we obtain a disc diagram $\mathcal{D}$ with boundary label $UW^{-1}V^{-1}W$. Hence, two words $U$ and $V$ denote conjugate elements of the group defined by the presentation $\langle\mathbf{x};\mathbf{r}\rangle$ if and only if there exists some annular $\mathbf{r}^{\ast}$-diagram $\mathcal{A}$ such that $U$ is a label for $\partial_i\mathcal{A}$ and $V$ is a label for $\partial_e\mathcal{A}$, where labels are read in the same direction.
For $C^{\prime}(1/6)$ and $C^{\prime}(1/4)-T(4)$ small cancellation presentations there are extremely strong structural theorems for annular diagrams; it is these structural theorems we state below and apply in this section. There are two cases: either there exists some region $\mathcal{R}$ of $\mathcal{A}$ which intersects both the internal and external boundaries $\partial_i\mathcal{A}$ and $\partial_e\mathcal{A}$ of $\mathcal{A}$, or there exists no such region.

Firstly, we give the structure of $\mathcal{A}$ when there exists no region which intersects both the internal and external boundaries of $\mathcal{A}$. This theorem is illustrated in Figure \ref{fig:annulardiagrams}. An $\mathbf{r}^{\ast}$-diagram $\Delta$ is \emph{reduced} if it does not contain a subdiagram $\Delta'$ which consists of precisely two regions and such that the label on $\partial\Delta'$ reduces to the trivial element of the free group $F(\mathbf{x})$. If $\mathcal{R}$ is a region of an annular diagram $\mathcal{A}$, then by an \emph{interior vertex} of $\mathcal{R}$ we mean a vertex which is not contained in the boundary of $\mathcal{A}$, and by an \emph{interior piece} of $\mathcal{R}$ we mean a connected subpath $I$ of $\partial \mathcal{R}$ such that, when viewed as a subpath of $\mathcal{A}$, its initial and terminal vertices $\iota(I)$ and $\tau(I)$ have degree greater than two while every other vertex in $I$ has degree precisely two, and such that no edges of $I$ are contained in the boundary of $\mathcal{A}$.

\begin{theorem}[Lyndon--Schupp, Theorem V.5.3]
\label{thm:AnnStructureThick}
Let $G$ be given by a presentation $\mathcal{P}=\langle\mathbf{x}; \mathbf{r}\rangle$ which satisfies either
\begin{enumerate}
\item\label{case:sixth} $C^{\prime}(1/6)$, or
\item\label{case:fourthT4} $C^{\prime}(1/4)-T(4)$.
\end{enumerate}
Assume the following three hypotheses.
\begin{enumerate}[(A)]
\item $\mathcal{A}$ is a reduced annular $\mathbf{r}^{\ast}$-diagram.
\item Each label of $\partial_i\mathcal{A}$ is cyclically $\mathbf{r}^{\ast}$-reduced, and each label of $\partial_e\mathcal{A}$ is cyclically $\mathbf{r}^{\ast}$-reduced.
\item $\mathcal{A}$ does not contain a region $\mathcal{R}$ such that both $\mathcal{R}\cap\partial_i\mathcal{A}$ and $\mathcal{R}\cap\partial_e\mathcal{A}$ contain an edge.
\end{enumerate}
Let $(q, p)$ be $(3, 6)$ or $(4, 4)$ in Cases (\ref{case:sixth}) and (\ref{case:fourthT4}) respectively. Then $\mathcal{A}$ satisfies all of the following conditions:
\begin{enumerate}[(i)]
\item For each region $\mathcal{R}$, either $\mathcal{R}\cap\partial_i\mathcal{A}$ or $\mathcal{R}\cap\partial_e\mathcal{A}$ contain an edge.
\item Each region $\mathcal{R}$ contains precisely $p/q+2$ interior pieces.
\item Each interior vertex of $\mathcal{A}$ has degree precisely $q$.
\end{enumerate}
\end{theorem}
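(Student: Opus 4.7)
Since this theorem is attributed directly to Lyndon--Schupp [Theorem V.5.3], my proof proposal is to reproduce their classical combinatorial curvature argument rather than invent a new one. The underlying principle is that $\mathcal{A}$ is topologically an annulus, so $V - E + F = 0$, and this Euler identity can be rewritten as an angle-sum equation which forces a very rigid structure once the small-cancellation hypothesis is imposed. The strategy is to assign angles at each corner of $\mathcal{A}$, split the angle sum into an ``interior'' contribution and a ``boundary'' contribution, and then exploit the fact that hypothesis (C) makes each region meet \emph{at most} one boundary component (so its boundary splits cleanly into interior pieces plus a single boundary arc) while hypothesis (B) prevents any boundary arc from exceeding half a relator.

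The first step is conclusion (i). I would argue by contradiction: suppose some region $\mathcal{R}_0$ meets neither $\partial_i\mathcal{A}$ nor $\partial_e\mathcal{A}$. Then a ``layer'' $\Lambda$ of $\mathcal{A}$ containing $\mathcal{R}_0$ can be cut out, and hypotheses (A) and (B) guarantee that this layer itself, when cut along a transversal path, yields a reduced disc subdiagram whose boundary is cyclically $\mathbf{r}^{\ast}$-reduced. Applying the analogous disc version of the Greendlinger/Strebel classification (the usual hypothesis for $C'(1/6)$ or $C'(1/4)-T(4)$ disc diagrams) to this subdiagram gives an $\mathbf{r}^{\ast}$-region with more than half its boundary on $\partial\Lambda$, contradicting cyclic $\mathbf{r}^{\ast}$-reducedness of the original annular boundaries.

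For (ii) and (iii) I would now set up the curvature formula: assign angle $2\pi/d(v)$ at each interior vertex $v$ of degree $d(v)$ and the appropriate boundary angle at each boundary vertex, so that the sum of angles around each region together with the Euler identity produces an equation of the form $\sum_{\mathcal{R}}\left(2 - \text{(number of interior pieces)} + \tfrac{p}{q}\cdot(\text{deficit})\right) = 0$. In the $C'(1/6)$ case, the metric condition forces each interior piece to have relative length $< 1/6$, hence a region all of whose boundary is interior must have at least $7$ pieces; since (by (i)) every region has one boundary arc, the extremal count per region is exactly $p/q + 2 = 4$, and any deviation upward would make the total angle sum strictly negative. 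In the $C'(1/4)-T(4)$ case the $T(4)$ hypothesis rules out interior vertices of degree $3$, pushing $q = 4$, and the same computation gives $p/q + 2 = 3$; simultaneously, any interior vertex of degree $> q$ would yield strictly positive curvature somewhere, again contradicting $\chi(\mathcal{A}) = 0$. The tightness of the inequality in both cases forces equality throughout, which is exactly (ii) and (iii).

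The main obstacle, and the reason this is a genuinely delicate theorem rather than a one-line curvature count, is the bookkeeping at boundary vertices and at regions whose intersection with $\partial\mathcal{A}$ might a priori be disconnected. Hypothesis (B) and the reducedness hypothesis (A) are precisely what allow one to show that the boundary intersection of each region is a single path (so that the ``number of interior pieces'' is well-defined and equals the combinatorial count appearing in the angle sum), while hypothesis (C) is what keeps the two boundary components independent in the curvature accounting. Without any one of (A), (B), (C) the conclusion genuinely fails, so a careful proof must invoke each hypothesis exactly where these exceptional configurations could otherwise destroy the uniform structure.
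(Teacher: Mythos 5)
This statement is quoted verbatim from Lyndon--Schupp (Theorem V.5.3) and the paper supplies no proof of it, so there is no in-paper argument to compare against; the relevant benchmark is the classical proof in \cite[Section V]{L-S}. Your sketch reproduces that proof's essential mechanism correctly: the annulus has Euler characteristic zero, so the combinatorial curvature (angle-sum) identity forces every region to contribute exactly zero once one checks that no region can contribute positively --- hypothesis (B) caps the boundary arcs at half a relator, hypothesis (C) ensures each region meets at most one boundary component, and the $C'(1/6)$ (resp.\ $C'(1/4)$--$T(4)$) condition bounds interior pieces below by $p/q+2$ and interior vertex degrees below by $q$; rigidity of the equality then yields (i)--(iii) simultaneously. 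One soft spot: your separate ``layer excision'' argument for (i) is not justified as written, because the boundary label of an internal layer need not be cyclically $\mathbf{r}^{\ast}$-reduced --- hypothesis (B) constrains only $\partial_i\mathcal{A}$ and $\partial_e\mathcal{A}$ --- so you cannot directly invoke the disc-diagram Greendlinger/Strebel classification for the excised piece. This is harmless, since in the classical treatment conclusion (i) is not proved separately but falls out of the same curvature accounting you set up for (ii) and (iii): a region touching neither boundary would have at least $2p/q+1$ interior pieces and hence strictly negative curvature that nothing can offset. I would drop the excision step and let the curvature count carry all three conclusions.
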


\begin{figure}
\centering
\begin{tikzpicture}
\begin{scope}[thick, decoration={
}
] 
\centerarc[thick](-3,0)(0:360:0.5)
\centerarc[thick](-3,0)(0:360:2)
\centerarc[thick](-3,0)(0:360:1.25)
\draw[postaction={decorate}]($(-3, 0)+({(1.25)*cos(20)},{(1.25)*sin(20)})$) to ($(-3, 0)+({(2)*cos(20)},{(2)*sin(20)})$);
\draw[postaction={decorate}] ($(-3, 0)+({(1.25)*cos(120)},{(1.25)*sin(120)})$) to ($(-3, 0)+({(2)*cos(120)},{(2)*sin(120)})$);
\draw[postaction={decorate}]($(-3, 0)+({(1.25)*cos(220)},{(1.25)*sin(220)})$) to ($(-3, 0)+({(2)*cos(220)},{(2)*sin(220)})$);
\draw[postaction={decorate}]($(-3, 0)+({(1.25)*cos(320)},{(1.25)*sin(320)})$) to ($(-3, 0)+({(2)*cos(320)},{(2)*sin(320)})$);
\draw[postaction={decorate}]($(-3, 0)+({(1.25)*cos(420)},{(1.25)*sin(420)})$) to ($(-3, 0)+({(2)*cos(420)},{(2)*sin(420)})$);
\draw[postaction={decorate}]($(-3, 0)+({(0.5)*cos(-10)},{(0.5)*sin(-10)})$) to ($(-3, 0)+({(1.25)*cos(-10)},{(1.25)*sin(-10)})$);
\draw[postaction={decorate}]($(-3, 0)+({(0.5)*cos(90)},{(0.5)*sin(90)})$) to ($(-3, 0)+({(1.25)*cos(90)},{(1.25)*sin(90)})$);
\draw[postaction={decorate}]($(-3, 0)+({(0.5)*cos(180)},{(0.5)*sin(180)})$) to ($(-3, 0)+({(1.25)*cos(180)},{(1.25)*sin(180)})$);
\draw[postaction={decorate}]($(-3, 0)+({(0.5)*cos(280)},{(0.5)*sin(280)})$) to ($(-3, 0)+({(1.25)*cos(280)},{(1.25)*sin(280)})$);
\draw[postaction={decorate}]($(-3, 0)+({(0.5)*cos(400)},{(0.5)*sin(400)})$) to ($(-3, 0)+({(1.25)*cos(400)},{(1.25)*sin(400)})$);
\centerarc[thick](3,0)(0:360:0.5)
\centerarc[thick](3,0)(0:360:2)
\centerarc[thick](3,0)(0:360:1.25)
\draw[postaction={decorate}] ($(3, 0)+({(0.5)*cos(0)},{(0.5)*sin(0)})$) to ($(3, 0)+({(2)*cos(0)},{(2)*sin(0)})$);
\draw[postaction={decorate}] ($(3, 0)+({(0.5)*cos(90)},{(0.5)*sin(90)})$) to ($(3, 0)+({(2)*cos(90)},{(2)*sin(90)})$);
\draw[postaction={decorate}] ($(3, 0)+({(0.5)*cos(160)},{(0.5)*sin(160)})$) to ($(3, 0)+({(2)*cos(160)},{(2)*sin(160)})$);
\draw[postaction={decorate}] ($(3, 0)+({(0.5)*cos(290)},{(0.5)*sin(290)})$) to ($(3, 0)+({(2)*cos(290)},{(2)*sin(290)})$);
\node[below=1pt] at ($(-3, 0)+(0,-2)$) {$C'(1/6)$};
\node[below=1pt] at ($(3, 0)+(0,-2)$) {$C'(1/4)-T(4)$};
\end{scope}
\end{tikzpicture}
\caption{Annular $C'(1/6)$ and $C'(1/4)-T(4)$ diagrams $\mathcal{A}$ with no region intersecting both the internal and external boundaries.}\label{fig:annulardiagrams}
\end{figure}
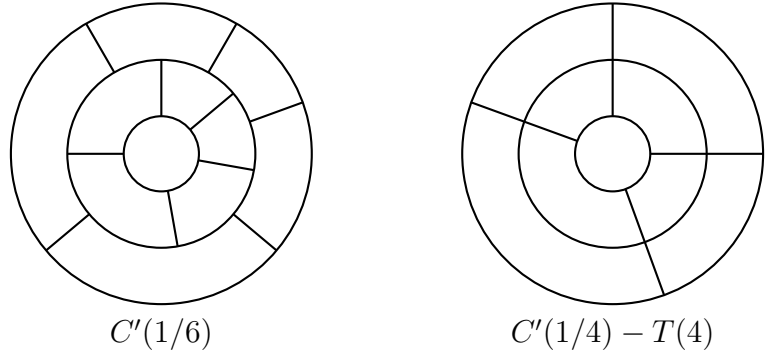

Secondly, we give the structure of $\mathcal{A}$ when there exists some region which intersects both the internal and external boundaries of $\mathcal{A}$. This theorem is illustrated in Figure \ref{fig:AnnularDiagramForm}.

\begin{theorem}[Lyndon--Schupp, Theorem V.5.5]
\label{thm:AnnStructureThin}
Let $G$ be given by a presentation $\mathcal{P}=\langle\mathbf{x}; \mathbf{r}\rangle$ which satisfies either $C^{\prime}(1/6)$ or $C^{\prime}(1/4)-T(4)$.
Assume the following three hypotheses.
\begin{enumerate}[(A)]
\item $\mathcal{A}$ is a reduced annular $\mathbf{r}^{\ast}$-diagram.
\item Each label of $\partial_i\mathcal{A}$ is cyclically $\mathbf{r}^{\ast}$-reduced, and each label of $\partial_e\mathcal{A}$ is cyclically $\mathbf{r}^{\ast}$-reduced.
\item $\mathcal{A}$ contains a region $\mathcal{S}$ such that both $\mathcal{S}\cap\partial_i\mathcal{A}$ and $\mathcal{S}\cap\partial_e\mathcal{A}$ contain an edge.
\end{enumerate}
Then every region $\mathcal{R}$ of $\mathcal{A}$ has edges on both $\partial_i\mathcal{A}$ and $\partial_e\mathcal{A}$, and $\mathcal{R}$ has at most two internal pieces.
\end{theorem}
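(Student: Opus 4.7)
The plan is to cut the annulus along a single interior edge-path running through the spanning region $\mathcal{S}$ from $\partial_i\mathcal{A}$ to $\partial_e\mathcal{A}$, converting $\mathcal{A}$ into a reduced disk $\mathbf{r}^{\ast}$-diagram $\mathcal{D}$. Its boundary cycle $\partial\mathcal{D}$ reads: an arc of $\partial\mathcal{S}$ (one side of the cut), a portion of $\partial_e\mathcal{A}$, the other arc of $\partial\mathcal{S}$ (the other side of the cut traversed oppositely), and a portion of $\partial_i\mathcal{A}$. The cyclic $\mathbf{r}^{\ast}$-reducedness hypothesis (B) on each of $\partial_i\mathcal{A}$ and $\partial_e\mathcal{A}$ then translates into a restriction on how much of any relator boundary can sit along the corresponding arc of $\partial\mathcal{D}$.

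Next I would apply the standard Greendlinger-type structure theorem for reduced $C^{\prime}(1/6)$ or $C^{\prime}(1/4)-T(4)$ disk diagrams (Lyndon--Schupp V.4.5 and V.4.6) to $\mathcal{D}$. Suppose, for contradiction, that some region $\mathcal{R}$ of $\mathcal{A}$ has an edge on only one of $\partial_i\mathcal{A}$ or $\partial_e\mathcal{A}$; say it has no edge on $\partial_e\mathcal{A}$. Greendlinger's lemma forces the existence of a region of $\mathcal{D}$ distinct from $\mathcal{S}$ whose boundary spends a dominant proportion of its length on $\partial\mathcal{D}$. The only segments of $\partial\mathcal{D}$ where a single region of $\mathcal{A}$ can deposit more than half of its boundary are the arcs lying in $\partial_i\mathcal{A}$ or in $\partial_e\mathcal{A}$; so such a region would exhibit a subword $W$ of a relator with $W>\tfrac12\mathbf{r}$ appearing along one of these arcs, contradicting (B). In the $C^{\prime}(1/4)-T(4)$ case the analogous conclusion is drawn from the stronger dihedral condition, which supplies \emph{two} such Greendlinger-type regions and rules out the existence of interior degree-three vertices that would be needed to absorb $\mathcal{R}$'s boundary.

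Finally, once it is known that every region $\mathcal{R}$ of $\mathcal{A}$ has edges on both $\partial_i\mathcal{A}$ and $\partial_e\mathcal{A}$, the bound on interior pieces is essentially structural. The boundary cycle $\partial\mathcal{R}$ splits as the concatenation of an arc on $\partial_i\mathcal{A}$, an arc on $\partial_e\mathcal{A}$, and (at most) two complementary arcs $J_1, J_2$ running from $\partial_i\mathcal{A}$ to $\partial_e\mathcal{A}$. Each $J_\ell$ is shared with a neighbouring region of $\mathcal{A}$, and the small cancellation hypothesis together with reducedness forces each $J_\ell$ to constitute a single interior piece; otherwise adjacent interior vertices on $J_\ell$ would force a third neighbour and hence another region of $\mathcal{A}$ squeezing in between, contradicting the just-established spanning property.

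The step I expect to be the main obstacle is making the Greendlinger reduction rigorous across both small cancellation regimes simultaneously: in the $C^{\prime}(1/6)$ regime the metric inequality $|W|>\tfrac12|R|$ is what triggers the contradiction against (B), whereas in the $C^{\prime}(1/4)-T(4)$ regime one must combine the weaker metric bound with the triangle-free vertex condition $T(4)$ to forbid branching of regions into the interior. Coordinating these two conditions, and in particular tracking how the two copies of the cut-path inside $\mathcal{S}$ interact with $T(4)$ at the cut endpoints, is where the bookkeeping becomes delicate.
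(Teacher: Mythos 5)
First, note that the paper does not prove this statement at all: it is imported verbatim, with attribution, as Theorem V.5.5 of Lyndon--Schupp, whose proof analyses the annular map directly via the combinatorial curvature formulas for maps of Euler characteristic zero rather than by cutting open to a disk. So your proposal is an independent proof attempt, and its central step has a genuine gap. You claim that if some region $\mathcal{R}$ of $\mathcal{A}$ misses $\partial_e\mathcal{A}$, then Greendlinger's lemma applied to the cut-open disk $\mathcal{D}$ produces a region contradicting hypothesis (B). But the Greendlinger regions of $\mathcal{D}$ exist unconditionally whenever $\mathcal{D}$ has more than one region, and they need not contradict (B): their long boundary arc may straddle one of the two cut arcs together with portions of both $\partial_i\mathcal{A}$ and $\partial_e\mathcal{A}$, in which case no subword longer than half a relator lies on either boundary component alone. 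Indeed, in a ``ladder'' annulus in which every region already spans, the Greendlinger regions of the cut-open strip are precisely the two end regions adjacent to the cut, and they contradict nothing. As written, your argument makes no use of the hypothesised bad region $\mathcal{R}$, so if it were valid it would prove that $\mathcal{A}$ consists of a single region, which is false. What is missing is a localisation: one must pass to the maximal sub-disk-diagram containing $\mathcal{R}$ that is separated from $\partial_e\mathcal{A}$ by spanning regions, and apply the disk structure theorem to \emph{that} diagram, whose boundary consists only of arcs of $\partial_i\mathcal{A}$ (where (B) forbids a Greendlinger arc) and of single pieces along the boundaries of spanning regions (where the small cancellation bound forbids one). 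In the $C^{\prime}(1/4)-T(4)$ case even this needs the $T(4)$ condition at the corner vertices, since the metric bound $(1-2\lambda)\lvert\partial D\rvert=\tfrac12\lvert\partial D\rvert$ is exactly borderline, as you yourself anticipate.

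Two smaller points. The cut is described inconsistently: cutting along a path through the interior of $\mathcal{S}$ would bisect $\mathcal{S}$ and destroy the diagram structure, and the two sides of a genuine cut path carry the \emph{same} label, not the labels of the two distinct transversal arcs of $\partial\mathcal{S}$; you should instead delete $\mathcal{S}$, or cut along one of its transversal boundary arcs. And your final step presupposes that each region meets each boundary component of $\mathcal{A}$ in a single connected arc, which is itself part of what must be established before $\partial\mathcal{R}$ can be decomposed into two boundary arcs and at most two interior arcs.
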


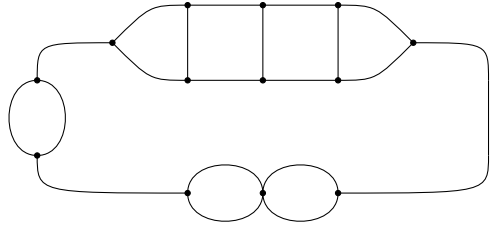
\begin{figure}[h]
\centering
\begin{tikzpicture}
\filldraw [black] (-2, 0.5) circle (1pt);
\filldraw [black] (-2, -0.5) circle (1pt);
\filldraw [black] (-1, 1) circle (1pt);
\filldraw [black] (0, 1.5) circle (1pt);
\filldraw [black] (0, 0.5) circle (1pt);
\filldraw [black] (1, 1.5) circle (1pt);
\filldraw [black] (1, 0.5) circle (1pt);
\filldraw [black] (2, 1.5) circle (1pt);
\filldraw [black] (2, 0.5) circle (1pt);
\filldraw [black] (3, 1) circle (1pt);
\filldraw [black] (0, -1) circle (1pt);
\filldraw [black] (1, -1) circle (1pt);
\filldraw [black] (2, -1) circle (1pt);
\draw (-2, -0.5) .. controls (-2.5, -0.5) and (-2.5, 0.5) .. (-2, 0.5);
\draw (-2, -0.5) .. controls (-1.5, -0.5) and (-1.5, 0.5) .. (-2, 0.5);
\draw (-2, 0.5) .. controls (-2, 1) .. (-1, 1);
\draw (-1, 1) .. controls (-0.5, 1.5) .. (0, 1.5);
\draw (0, 1.5) -- (1, 1.5);
\draw (1, 1.5) -- (2, 1.5);
\draw (2, 1.5) .. controls (2.5, 1.5) .. (3, 1);
\draw (-1, 1) .. controls (-0.5, 0.5) .. (0, 0.5);
\draw (0, 0.5) -- (1, 0.5);
\draw (1, 0.5) -- (2, 0.5);
\draw (2, 0.5) .. controls (2.5, 0.5) .. (3, 1);
\draw (0, 1.5) -- (0, 0.5);
\draw (1, 1.5) -- (1, 0.5);
\draw(2, 1.5) -- (2, 0.5);
\draw (3, 1) .. controls (4, 1) .. (4, 0.5);
\draw (4, 0.5) -- (4, -0.5);
\draw (4, -0.5) .. controls (4, -1) .. (2, -1);
\draw (2, -1) .. controls (2, -0.5) and (1, -0.5) .. (1, -1);
\draw (2, -1) .. controls (2, -1.5) and (1, -1.5) .. (1, -1);
\draw (1, -1) .. controls (1, -0.5) and (0, -0.5) .. (0, -1);
\draw (1, -1) .. controls (1, -1.5) and (0, -1.5) .. (0, -1);
\draw (0, -1) .. controls (-2, -1) .. (-2, -0.5);
\end{tikzpicture}
\caption{An annular diagram $\mathcal{A}$ with a region intersecting both the internal and external boundaries of $\mathcal{A}$}\label{fig:AnnularDiagramForm}
\end{figure}


\p{Subgroup intersection}
We now apply the above structural results for annular diagrams to prove Theorems \ref{thm:FreeMalnormMetric6} and \ref{thm:FreeMalnormMetric4T4}. These theorems lift conditions on conjugation and intersections of subgroups in, respectively, $C'(1/6)$ and $C'(1/4)-T(4)$ small cancellation groups $\langle \mathbf{x}; \mathbf{r}\rangle$ to the analogous situation in the ambient free group $F(\mathbf{x})$.

\begin{theorem}
\label{thm:FreeMalnormMetric6}
Suppose that $\langle \mathbf{x};\mathbf{r}, \mathbf{s}\rangle$ is a $C^{\prime}(1/6)$ small cancellation presentation. Let $G$ be the group given by the sub-presentation $\langle \mathbf{x};\mathbf{r}\rangle$, and let $\varphi: F(\mathbf{x})\rightarrow \langle \mathbf{x}; \mathbf{r}\rangle$ be the natural map.

Suppose that $\mathbf{t}\subset F(\mathbf{x})$ is a set of words such that every word $V_{\mathbf{t}}(\mathbf{t})$ over $\mathbf{t}$ which is freely reduced over $\mathbf{t}$ is cyclically $\mathbf{r}^{\ast}$-reduced.
%
%
For all $U\in\langle\mathbf{s}\rangle$ and $V\in\langle\mathbf{t}\rangle$, if there exists $g\in G$ such that $g^{-1}\varphi(U)g=_G\varphi(V)$ then there exists $W\in F(\mathbf{x})$ such that $\varphi(W)=g$ and $W^{-1}UW=_{F(\mathbf{x})} V$.
In particular, there exists $g\in G$ such that $\langle \mathbf{s}\rangle_G^g\cap \langle \mathbf{t}\rangle_G\neq_G1$ if and only if there exists $W\in F(\mathbf{x})$ such that $\langle \mathbf{s}\rangle^{W}\cap \langle \mathbf{t}\rangle\neq_{F(\mathbf{x})}1$.
\end{theorem}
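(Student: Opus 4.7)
The strategy is to realise the conjugacy $g^{-1}\varphi(U)g=_G\varphi(V)$ by a reduced annular $\mathbf{r}^{\ast}$-diagram $\mathcal{A}$ and then to show that the hypotheses force $\mathcal{A}$ to contain no $\mathbf{r}^{\ast}$-regions; the label of any attaching path of such an $\mathcal{A}$ will then witness $W^{-1}UW=_{F(\mathbf{x})}V$ with $\varphi(W)=g$.

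First I would replace $U$ by $U_{\mathbf{s}}(\mathbf{s})=S_1^{\epsilon_1}\cdots S_n^{\epsilon_n}$, freely reduced over $\mathbf{s}$, and $V$ by $V_{\mathbf{t}}(\mathbf{t})$, freely reduced over $\mathbf{t}$. Because $\langle\mathbf{x};\mathbf{r},\mathbf{s}\rangle$ satisfies $C^{\prime}(1/6)\subseteq C^{\prime}(1/4)$, Lemma~\ref{lem:FreeSbgps} gives that $U$ is cyclically $\mathbf{r}^{\ast}$-reduced in $F(\mathbf{x})$, and the hypothesis on $\mathbf{t}$ gives the same for $V$. Standard conjugacy-diagram theory then supplies a reduced annular $\mathbf{r}^{\ast}$-diagram $\mathcal{A}$ whose inner boundary reads $U$ from a basepoint $a\in\partial_i\mathcal{A}$ and whose outer boundary reads $V$ from a basepoint $b\in\partial_e\mathcal{A}$, and such that the label $W$ of any path from $a$ to $b$ in $\mathcal{A}$ satisfies $\varphi(W)=g$. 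If $\mathcal{A}$ has no $\mathbf{r}^{\ast}$-regions then topologically it is just a thickened cycle labelled $U$, so reading any such path from $a$ to $b$ immediately gives $W^{-1}UW=_{F(\mathbf{x})}V$, as required.

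To force this conclusion, suppose for a contradiction that $\mathcal{A}$ contains at least one $\mathbf{r}^{\ast}$-region. Using the diagram $\mathcal{D}_U$ from Lemma~\ref{lem:diagramD_U}, I would glue $\mathcal{D}_U$ into the inner hole of $\mathcal{A}$ to obtain a reduced $(\mathbf{r}\cup\mathbf{s})^{\ast}$-disk diagram $\mathcal{A}^{+}$ with boundary labelled $V$. Reducedness is preserved because $\mathbf{r}$ and $\mathbf{s}$ share no relators, so no $\mathbf{r}^{\ast}$--$\mathbf{s}^{\ast}$ cancelling pair can form across the glued curve. Crucially, every $\mathbf{s}^{\ast}$-region of $\mathcal{A}^{+}$ lies in the image of $\mathcal{D}_U$, hence strictly inside the former inner boundary of $\mathcal{A}$; in particular no $\mathbf{s}^{\ast}$-region meets $\partial\mathcal{A}^{+}=V$, and every boundary region of $\mathcal{A}^{+}$ is therefore an $\mathbf{r}^{\ast}$-region. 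Applying Greendlinger's lemma for the $C^{\prime}(1/6)$ presentation $\langle\mathbf{x};\mathbf{r},\mathbf{s}\rangle$ to the freely reduced boundary $V$ of $\mathcal{A}^{+}$ now produces a boundary region $\mathcal{R}$ whose arc on a cyclic shift of $V$ has length $>(1-3\lambda)|R|=|R|/2$, where $R$ is the relator labelling $\mathcal{R}$. By the previous observation $R\in\mathbf{r}^{\ast}$, so a cyclic shift of $V$ contains a subword longer than $|R|/2$ of an $\mathbf{r}^{\ast}$-relator: this contradicts the cyclic $\mathbf{r}^{\ast}$-reducedness of $V$.

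The ``in particular'' statement will then follow by applying the main claim to lifts $U\in\langle\mathbf{s}\rangle$, $V\in\langle\mathbf{t}\rangle$ of a non-trivial element of $\langle\mathbf{s}\rangle_G^{g}\cap\langle\mathbf{t}\rangle_G$, the converse direction being immediate from $\varphi$ being a homomorphism. The hardest part will be bookkeeping around the gluing in Step~3: one must verify that $\mathcal{A}^{+}$ remains reduced (handling any $\mathbf{s}^{\ast}$--$\mathbf{s}^{\ast}$ cancellations created across the glued boundary), that the $\mathbf{r}^{\ast}$-regions of $\mathcal{A}$ genuinely persist, and that the appropriate form of Greendlinger's lemma is invoked -- since $V$ is cyclically $\mathbf{r}^{\ast}$-reduced but not necessarily cyclically $(\mathbf{r}\cup\mathbf{s})^{\ast}$-reduced, the right version of Greendlinger is the one for a reduced disk diagram whose boundary is freely reduced and represents the identity in the ambient small-cancellation group, so that the conclusion applies to some cyclic shift of $V$ rather than to $V$ itself.
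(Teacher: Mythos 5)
Your overall strategy --- cap the annular diagram $\mathcal{A}$ with $\mathcal{D}_U$ and apply Greendlinger's lemma to the resulting disk diagram at the \emph{outer} boundary --- is genuinely different from the paper's proof, which never invokes Greendlinger here: the paper keeps the annulus, uses the structure theorems for annular $C^{\prime}(1/6)$ diagrams (Theorems \ref{thm:AnnStructureThick} and \ref{thm:AnnStructureThin}) to force any region of $\mathcal{A}$ meeting $\partial_i\mathcal{A}$ to have a long arc there, and then contradicts this via the decomposition of $\partial\mathcal{D}_U$ into the paths $p_i$, each a product of at least five $(\mathbf{r}\cup\mathbf{s})^{\ast}$-pieces, together with the straddling Lemma \ref{lem:connectingpathI}. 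Your route would be slicker if it worked, but there is a genuine gap at the step ``every boundary region of $\mathcal{A}^{+}$ is an $\mathbf{r}^{\ast}$-region''.

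The assertion that the $\mathbf{s}^{\ast}$-regions lie ``strictly inside the former inner boundary'' holds only if $\partial_i\mathcal{A}$ and $\partial_e\mathcal{A}$ are edge-disjoint, and nothing guarantees this: a reduced annular diagram may be pinched, with $\partial_i\mathcal{A}$ and $\partial_e\mathcal{A}$ sharing arcs that lie on no region (this occurs exactly when cyclic shifts of $U$ and $V$ share subwords carried around the annulus outside every region, e.g.\ $U=\alpha\beta$, $V=\alpha\gamma$ with $\beta\gamma^{-1}$ bounding a disk diagram). Along such an arc the adjacent regions of $\mathcal{D}_U$ acquire edges on $\partial\mathcal{A}^{+}$, so Greendlinger may hand you an $\mathbf{s}^{\ast}$-region whose long arc lies on $V$. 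That yields no contradiction: the hypotheses only assert that $V$ is cyclically $\mathbf{r}^{\ast}$-reduced and place no bound on overlaps of $V$ with elements of $\mathbf{s}^{\ast}$ (indeed $\mathbf{t}$ may contain elements of $\mathbf{s}$, as it does in the paper's application with $\mathbf{t}=\mathbf{s}$). Closing the gap requires controlling the interface between the $\mathbf{r}^{\ast}$-regions and $\mathcal{D}_U$ directly, which is precisely what the paper's piece-counting on the $p_i$ and Lemma \ref{lem:connectingpathI} accomplish; your argument never uses that decomposition, which is the tell-tale sign that the $C^{\prime}(1/6)$ hypothesis on the combined presentation $\langle\mathbf{x};\mathbf{r},\mathbf{s}\rangle$ is not being exploited where it is actually needed.
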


\begin{proof}
First note that every word $U_{\mathbf{s}}(\mathbf{s})$ over $\mathbf{s}$ which is freely reduced over $\mathbf{s}$ is cyclically $\mathbf{r}^{\ast}$-reduced by Lemma \ref{lem:FreeSbgps}.
Therefore, suppose that $\mathcal{A}$ is an annular $\mathbf{r}^{\ast}$-diagram such that its interior boundary $\partial_i\mathcal{A}$ has a label which is a cyclic shift and free reduction (both in $F(\mathbf{x})$) of some word $U:=U_{\mathbf{s}}(\mathbf{s})=S_1^{\epsilon_1}\cdots S_n^{\epsilon_n}$, $S_i\in\mathbf{s}$, and its exterior boundary $\partial_e\mathcal{A}$ has a label which is a cyclic shift and free reduction (both in $F(\mathbf{x})$) of some word $V_{\mathbf{t}}(\mathbf{t})$. It is sufficient to prove that $\mathcal{A}$ contains no regions.

Consider the disk diagram $\mathcal{D}_U$ from Lemma \ref{lem:diagramD_U}.
Each of the paths $p_k$ on the boundary of $\mathcal{D}_U$ contains at least five $(\mathbf{r}\cup \mathbf{s})^{\ast}$-pieces.
Note that we can form an $(\mathbf{r}\cup \mathbf{s})^{\ast}$-disk diagram $\Delta$ by attaching the boundary $\partial\mathcal{D}_U$ to the internal boundary $\partial_i\mathcal{A}$ of $\mathcal{A}$.
Suppose that $\mathcal{A}$ contains at least one region; we find a contradiction.

As $\mathcal{A}$ contains a region, the boundary of $\mathcal{D}_U$ connects to a region $\mathcal{R}$ of $\mathcal{A}$ over an attaching path $I$ such that $I$ contains an edge.

First, suppose that $\mathcal{R}\cap \partial_e\mathcal{A}$ is empty.
By Theorem \ref{thm:AnnStructureThick}.(ii) we have that $|I|>\frac26|\partial\mathcal{R}|$. Hence, $I$ cannot be written as the product of fewer than three $(\mathbf{r}\cup \mathbf{s})^{\ast}$-pieces. Therefore, the path $I$ straddles a sequence of three paths $(p_{i}, p_{i+1\pmod n}, p_{i+2\pmod n})$, a contradiction by Lemma \ref{lem:connectingpathI}.

Next, suppose that $\mathcal{R}\cap\partial_e\mathcal{A}$ is non-empty. By Theorem \ref{thm:AnnStructureThin}, every region of $\mathcal{A}$ has an edge on $\partial_i\mathcal{A}$ and an edge on $\partial_e\mathcal{A}$.
Additionally, suppose that $\mathcal{R}$ does not share an edge with any other region of $\mathcal{A}$. Then (using the fact that every label of $\partial_i\mathcal{A}$ and of $\partial_e\mathcal{A}$ is cyclically $\mathbf{r}^{\ast}$-reduced) the boundary of $\mathcal{R}$ is split equally between $\partial_i\mathcal{A}$ and $\partial_e\mathcal{A}$, and so $|I|=\frac12|\partial\mathcal{R}|$. Hence $I$ cannot be written as the product of fewer than four $(\mathbf{r}\cup \mathbf{s})^{\ast}$-pieces. Therefore, the path $I$ straddles a sequence of four paths $(p_i,\ldots, p_{i+3\pmod n})$, a contradiction by Lemma \ref{lem:connectingpathI}.
On the other hand, suppose that $\mathcal{R}$ shares an edge with some region $\mathcal{S}$ of $\mathcal{A}$.
Now, as with the path $I$, the path $J:=\mathcal{S}\cap\partial_i\mathcal{A}$ contains an edge. We may assume that the terminal vertex of $I$ is the initial vertex of $J$, so $\tau(I)=\iota(J)$. The minimal possible length of $I$ corresponds to when $|\partial_e\mathcal{A}\cap\mathcal{R}|=\frac12|\partial\mathcal{R}|$ and $\mathcal{R}$ connects over a piece to an additional region $\mathcal{T}$ of $\mathcal{A}$. Hence, $|I|>(1-\frac16-\frac16-\frac12)|\partial\mathcal{R}|=\frac16|\partial\mathcal{R}|$. Hence, the attaching path $I$ cannot be written as the product of fewer than two $(\mathbf{r}\cup\mathbf{s})^{\ast}$-pieces. Similarly, $|J|>\frac16|\partial\mathcal{S}|$ so the attaching path $J$ cannot be written as the product of fewer than two $(\mathbf{r}\cup\mathbf{s})^{\ast}$-pieces. We therefore have that $I$ straddles two paths $p_{i}$ and $p_{i+1\pmod n}$, with $\iota(I)\in p_i$ and $\tau(I)\in p_{i+1\pmod n}$, and that $J$ straddles two paths $p_{j}$ and $p_{j+1\pmod n}$, with $\iota(J)\in p_j$ and $\tau(J)\in p_{j+1\pmod n}$. 
Recall that each path $p_k$ is the product of at least five $(\mathbf{r}\cup\mathbf{s})^{\ast}$-pieces. As $\tau(I)=\iota(J)$ and as the path $p_{i+1\pmod n}$ does not consist of a single $(\mathbf{r}\cup\mathbf{s})^{\ast}$-piece we have that $p_{i+1\pmod n}=p_j$. However, this means that $p_j$ decomposes into only two pieces, a contradiction.
\end{proof}

We now prove the analogous result for $C^{\prime}(1/4)-T(4)$ presentations. Note that if $\langle \mathbf{x}; \mathbf{r}, \mathbf{s}\rangle$ is a $C^{\prime}(1/4)-T(4)$ presentation then the sub-presentation $\langle \mathbf{x}; \mathbf{r}\rangle$ is also a $C^{\prime}(1/4)-T(4)$ presentation.

\begin{theorem}
\label{thm:FreeMalnormMetric4T4}
Suppose that $\langle \mathbf{x};\mathbf{r}, \mathbf{s}\rangle$ is a $C^{\prime}(1/4)-T(4)$ small cancellation presentation. Let $G$ be the group given by the sub-presentation $\langle \mathbf{x};\mathbf{r}\rangle$, and let $\varphi: F(\mathbf{x})\rightarrow \langle \mathbf{x}; \mathbf{r}\rangle$ be the natural map.

Suppose that $\mathbf{t}\subset F(\mathbf{x})$ is a set of words such that every word $V_{\mathbf{t}}(\mathbf{t})$ over $\mathbf{t}$ which is freely reduced over $\mathbf{t}$ is cyclically $\mathbf{r}^{\ast}$-reduced.
For all $U\in\langle\mathbf{s}\rangle$ and $V\in\langle\mathbf{t}\rangle$, if there exists $g\in G$ such that $g^{-1}\varphi(U)g=_G\varphi(V)$ then there exists $W\in F(\mathbf{x})$ such that $\varphi(W)=g$ and $W^{-1}UW=_{F(\mathbf{x})} V$.
In particular, there exists $g\in G$ such that $\langle \mathbf{s}\rangle_G^g\cap \langle \mathbf{t}\rangle_G\neq_G1$ if and only if there exists $W\in F(\mathbf{x})$ such that $\langle \mathbf{s}\rangle^{W}\cap \langle \mathbf{t}\rangle\neq_{F(\mathbf{x})}1$.
\end{theorem}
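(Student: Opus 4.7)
The plan is to imitate the proof of Theorem \ref{thm:FreeMalnormMetric6}, substituting the $C'(1/4)-T(4)$ version of the annular structure theorems for the $C'(1/6)$ version throughout. First I would apply Lemma \ref{lem:FreeSbgps}, which is stated for both small cancellation regimes, to conclude that every word $U_{\mathbf{s}}(\mathbf{s})$ freely reduced over $\mathbf{s}$ is cyclically $\mathbf{r}^{\ast}$-reduced; combined with the hypothesis on $\mathbf{t}$, this lets me reduce the theorem to showing that any reduced annular $\mathbf{r}^{\ast}$-diagram $\mathcal{A}$ whose interior boundary carries a cyclic shift of a cyclically $\mathbf{r}^{\ast}$-reduced $U\in\langle\mathbf{s}\rangle$ and whose exterior boundary carries a cyclic shift of a cyclically $\mathbf{r}^{\ast}$-reduced $V\in\langle\mathbf{t}\rangle$ contains no regions.

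I would then attach the disk diagram $\mathcal{D}_U$ from Lemma \ref{lem:diagramD_U} to $\partial_i\mathcal{A}$, forming an enlarged $(\mathbf{r}\cup\mathbf{s})^{\ast}$-diagram. Since $\langle\mathbf{x};\mathbf{r},\mathbf{s}\rangle$ is $C'(1/4)$ and hence $C(5)$, every boundary path $p_i$ of $\mathcal{D}_U$ is the product of at least three $(\mathbf{r}\cup\mathbf{s})^{\ast}$-pieces; this is the $C'(1/4)-T(4)$ analogue of the ``at least five pieces'' estimate used in the proof of Theorem \ref{thm:FreeMalnormMetric6}. Assuming for contradiction that $\mathcal{A}$ contains a region, I would apply the structure theorems with $(q,p)=(4,4)$ and run the three parallel sub-cases: the \emph{thick} sub-case where a region $\mathcal{R}$ of $\mathcal{A}$ has $\mathcal{R}\cap\partial_e\mathcal{A}$ empty, and the two \emph{thin} sub-cases (an isolated region with edges on both boundaries, and a region that shares an edge with a neighbour), corresponding respectively to Theorem \ref{thm:AnnStructureThick} and Theorem \ref{thm:AnnStructureThin}.

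In the isolated thin sub-case the argument goes through directly: cyclically $\mathbf{r}^{\ast}$-reducedness forces $|I|=\tfrac{1}{2}|\partial\mathcal{R}|$, so $I$ must be the product of at least three $(\mathbf{r}\cup\mathbf{s})^{\ast}$-pieces (as each piece has length $<\tfrac{1}{4}|\partial\mathcal{R}|$), and Lemma \ref{lem:connectingpathI} then provides the contradiction since the middle path in the straddle would decompose as a single piece against a lower bound of three. For the thick sub-case and the shared-edge thin sub-case I would combine the length estimates coming from Theorem \ref{thm:AnnStructureThick}.(ii) (three interior pieces per region, giving $|I|>\tfrac14|\partial\mathcal{R}|$) with the $T(4)$ condition embedded in Theorem \ref{thm:AnnStructureThick}.(iii) (interior vertices of degree exactly four), using the latter to rule out the $2$-piece straddle configuration by forcing an additional region at the seam vertex between $\mathcal{S}_i$ and $\mathcal{S}_{i+1}$ and then propagating that constraint around the diagram until the attaching path actually straddles three consecutive paths $p_i$ and Lemma \ref{lem:connectingpathI} applies.

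The main obstacle is the thick sub-case: in the $C'(1/6)$ argument the estimate $|I|>\tfrac{2}{6}|\partial\mathcal{R}|$ immediately forces $I$ into at least three pieces, whereas in the $C'(1/4)-T(4)$ setting the analogous estimate $|I|>\tfrac{1}{4}|\partial\mathcal{R}|$ is exactly one piece short of triggering Lemma \ref{lem:connectingpathI}. The $C'(1/6)$ arithmetic has to be replaced by a $T(4)$-based local analysis of the seam vertices of $\mathcal{D}_U$ inside the enlarged diagram, and I expect this step to be the delicate one; once it is in place, the remaining cases carry over essentially verbatim from the proof of Theorem \ref{thm:FreeMalnormMetric6}.
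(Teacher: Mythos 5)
Your proposal follows the paper's proof essentially verbatim: the same reduction via Lemma \ref{lem:FreeSbgps}, the same attachment of $\mathcal{D}_U$ to $\partial_i\mathcal{A}$ with the three-piece lower bound on each $p_k$, and the same three sub-cases governed by Theorems \ref{thm:AnnStructureThick} and \ref{thm:AnnStructureThin}. The thick sub-case you flag as delicate is resolved in the paper exactly by the $T(4)$ seam-vertex analysis you anticipate, but more directly than your sketch suggests: once $I$ straddles $p_i$ and $p_{i+1\pmod n}$, the vertex $p_i\cap p_{i+1\pmod n}$ is an interior vertex of the combined disc diagram meeting only $\mathcal{S}_i$, $\mathcal{S}_{i+1\pmod n}$ and $\mathcal{R}$, hence has degree three, contradicting $T(4)$ immediately --- no propagation around the diagram is required (that style of argument, forcing an extra region $\mathcal{T}$ and reapplying $T(4)$, is what the paper uses in the shared-edge thin sub-case instead).
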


\begin{proof}
First note that every word $U_{\mathbf{s}}(\mathbf{s})$ over $\mathbf{s}$ which is freely reduced over $\mathbf{s}$ is cyclically $\mathbf{r}^{\ast}$-reduced by Lemma \ref{lem:FreeSbgps}.
Therefore, suppose that $\mathcal{A}$ is an annular $\mathbf{r}^{\ast}$-diagram such that its interior boundary $\partial_i\mathcal{A}$ has a label which is a cyclic shift and free reduction (both in $F(\mathbf{x})$) of some word $U:=U_{\mathbf{s}}(\mathbf{s})=S_1^{\epsilon_1}\cdots S_n^{\epsilon_n}$, $S_i\in\mathbf{s}$, and its exterior boundary $\partial_e\mathcal{A}$ has a label which is a cyclic shift and free reduction (both in $F(\mathbf{x})$) of some word $V_{\mathbf{t}}(\mathbf{t})$. It is sufficient to prove that $\mathcal{A}$ contains no regions.

Consider the disk diagram $\mathcal{D}_U$ from Lemma \ref{lem:diagramD_U}.
Each of the paths $p_k$ on the boundary of $\mathcal{D}_U$ contains at least three $(\mathbf{r}\cup \mathbf{s})^{\ast}$-pieces.
Note that we can form an $(\mathbf{r}\cup \mathbf{s})^{\ast}$-disk diagram $\Delta$ by attaching the boundary $\partial\mathcal{D}_U$ to the internal boundary $\partial_i\mathcal{A}$ of $\mathcal{A}$.
Suppose that $\mathcal{A}$ contains at least one region; we find a contradiction.

As $\mathcal{A}$ contains a region, the boundary of $\mathcal{D}_U$ connects to a region $\mathcal{R}$ of $\mathcal{A}$ over an attaching path $I$ such that $I$ contains an edge.

First, suppose that $\mathcal{R}\cap \partial_e\mathcal{A}$ is empty.
By Theorem \ref{thm:AnnStructureThick}.(ii) we have that $|I|>\frac14|\partial\mathcal{R}|$.
Hence, $I$ cannot be written as the product of fewer than two $(\mathbf{r}\cup \mathbf{s})^{\ast}$-pieces. Therefore, the path $I$ straddles two paths $p_{i}$, $p_{i+1\pmod n}$. Then the vertex $v=p_{i}\cap p_{i+1\pmod n}$ is an internal vertex of a $C^{\prime}(1/4)-T(4)$ disc diagram which has degree three, a contradiction.

Next, suppose that $\mathcal{R}\cap\partial_e\mathcal{A}$ is non-empty. By Theorem \ref{thm:AnnStructureThin}, every region of $\mathcal{A}$ has an edge on $\partial_i\mathcal{A}$ and an edge on $\partial_e\mathcal{A}$.
Additionally, suppose that $\mathcal{R}$ does not share an edge with any other region of $\mathcal{A}$. Then (using the fact that every label of $\partial_i\mathcal{A}$ and of $\partial_e\mathcal{A}$ is cyclically $\mathbf{r}^{\ast}$-reduced) the boundary of $\mathcal{R}$ is split equally between $\partial_i\mathcal{A}$ and $\partial_e\mathcal{A}$, and so $|I|=\frac12|\partial\mathcal{R}|$. Hence $I$ cannot be written as the product of fewer than three $(\mathbf{r}\cup \mathbf{s})^{\ast}$-pieces. Therefore, the path $I$ straddles a seqence of three paths $(p_i, p_{i+1\pmod n}, p_{i+2\pmod n})$, a contradiction by Lemma \ref{lem:connectingpathI}.
On the other hand, suppose that $\mathcal{R}$ shares an edge with some region $\mathcal{S}$ of $\mathcal{A}$.
Now, as with the path $I$, the path $J:=\mathcal{S}\cap\partial_i\mathcal{A}$ contains an edge.
We may assume that the terminal vertex of $I$ is the initial vertex of $J$, so $\tau(I)=\iota(J)$, and note that this vertex is an internal vertex of the disc diagram.
Therefore, the vertex $\tau(I)=\iota(J)$ has degree at least $4$, by the $T(4)$ condition. Hence, there is a path $p_i$ of $\mathcal{D}_U$ such that either $p_i$ is a subpath of $I$ or $I$ is a subpath of $p_i$. Recall that each path $p_k$ is the product of at least three $(\mathbf{r}\cup\mathbf{s})^{\ast}$-pieces. If $p_i$ is a subpath of $I$ then $p_i$ consists of a single $(\mathbf{r}\cup\mathbf{s})^{\ast}$-piece, a contradiction. If $I$ is a subpath of $p_i$ then $|I|<\frac14|\partial\mathcal{R}|$. Now, $|\partial_e\mathcal{A}\cap\mathcal{R}|\leq\frac12|\partial\mathcal{R}|$, and $|\mathcal{R}\cap\mathcal{S}|<\frac14|\partial\mathcal{R}|$, hence there exists an additional region $\mathcal{T}$ of $\mathcal{A}$ which connects over a piece to $\mathcal{R}$, and where $K:=\mathcal{T}\cap\mathcal{D}_U$ is a non-empty path. We may assume that $\tau(K)\in I$, and so $\tau(K)=\iota(I)$. The vertex $\tau(K)=\iota(I)$ is an internal vertex of the disk diagram $\Delta$, and hence has degree four by the $T(4)$ condition. Thus $I=p_i$, but then $p_i$ consists of a single $(\mathbf{r}\cup\mathbf{s})^{\ast}$-piece, a contradiction.
%
\end{proof}

\p{Proof of Theorem \ref{thm:FreeMalnormMetric}}
We now prove Theorem \ref{thm:FreeMalnormMetric}.

\begin{proof}[Proof of Theorem \ref{thm:FreeMalnormMetric}]
As the set $\mathbf{s}$ contains no proper powers and satisfies the $C(5)$ small cancellation condition, the set $\mathbf{s}$ also satisfies Wise's $c(5)$ small cancellation condition \cite[Definition 2.3]{wise2001residual}. Hence, $\langle \mathbf{s}\rangle$ is a malnormal in $F(\mathbf{x})$ with minimal basis $\mathbf{s}$ \cite[Theorems 2.11 \& 2.14]{wise2001residual}.
The result then follows by taking $\mathbf{t}:=\mathbf{s}$ in Theorem \ref{thm:FreeMalnormMetric6} or in Theorem \ref{thm:FreeMalnormMetric4T4}, as appropriate.
\end{proof}

\p{\boldmath{$C(m)-T(4)$} does not work}
The $C^{\prime}(1/6)$ and $C^{\prime}(1/4)-T(4)$ conditions are needed in Theorems \ref{thm:FreeMalnormMetric6} and \ref{thm:FreeMalnormMetric4T4} because they are metric conditions, in the sense that pieces have bounded length. The following example demonstrates that the conclusions of these two theorems do not hold in general if the set $\mathbf{r}\cup\mathbf{s}$ satisfies the non-metric condition $C(m)-T(4)$ for any $m$, even if $\mathbf{r}$ satisfies $C^{\prime}(\lambda)$ for arbitrarily small $\lambda$: Let $\mathbf{x}=\{a, b, x_1, y_1, \ldots, x_{2p}, y_{2p}\}$, and take
\begin{align*}
R&=[x_1, y_1]\cdots[x_{2p}, y_{2p}],~\textnormal{and}\\
S&=[x_1, y_1]\cdots[x_p, y_p]abab^2\cdots ab^{q}.
\end{align*}
Then $\langle \mathbf{x}; R\rangle$ has the $C^{\prime}\left(1/(2p-1)\right)$ small cancellation condition while $\langle \mathbf{x}; R, S\rangle$ has the $C(m)-T(4)$ condition, where $m$ is arbitrarily large and dependent on $p$ and $q$ (note that $\langle\mathbf{x}; R, S\rangle$ has $T(4)$ as $R$ and $S$ are positive words). Note that $R$ has a piece $[x_1, y_1]\cdots[x_p, y_p]$ which has length $\frac12|R|$. Then the words $S$ and $T=abab^2\cdots ab^{q}\left([x_{p+1}, y_{p+1}]\cdots[x_{2p}, y_{2p}]\right)^{-1}$ are both $\mathbf{r}^{\ast}$-reduced, as are all their powers, and neither of $S$ nor $T$ is a proper power of any element of $\mathbf{x}$. Moreover, $S$ and $T$ are conjugate in $\langle \mathbf{x}; \mathbf{r}\rangle$ but are not freely conjugate. Hence, for all $W(\mathbf{x})\in F(\mathbf{s})$ we have that $\langle\mathbf{s}\rangle\cap\langle\mathbf{t}\rangle^{W(\mathbf{x})}=_{F(X)}1$, but there exists $g\in G$ such that $\langle\mathbf{s}\rangle_G\cap\langle\mathbf{t}\rangle_G^g\neq_G1$.

\p{Wise's graphical small cancellation theory}
In the classical small cancellation theory, which underlies Theorems \ref{thm:FreeMalnormMetric6} and \ref{thm:FreeMalnormMetric4T4}, we ensure that certain conditions hold for all cyclic shifts of every $R\in(\mathbf{r}\cup\mathbf{s})^{\pm1}$ (the symmetrised closure $(\mathbf{r}\cup\mathbf{s})^{\ast}$). This is more powerful than is necessary for Theorems \ref{thm:FreeMalnormMetric6} and \ref{thm:FreeMalnormMetric4T4}. In particular, we do not need all cyclic shifts of the elements of $\mathbf{s}^{\pm1}$ because the diagram $\mathcal{D}_U$, given by Lemma \ref{lem:diagramD_U}, is constructed only by elements of $\mathbf{s}^{\pm1}$.

We therefore introduce the following conditions, which sit between the classical small cancellation conditions and Wise's graphical small cancellation conditions \cite{wise2001residual} (see also Gromov \cite{Gromov2003random}). We state below two theorems which use these new conditions to generalise Theorems \ref{thm:FreeMalnormMetric6} and \ref{thm:FreeMalnormMetric4T4}. We do not prove these analogous theorems as the proofs are essentially identical to the proofs of Theorems \ref{thm:FreeMalnormMetric6} and \ref{thm:FreeMalnormMetric4T4}. This is because Wise's small cancellation conditions involve the use of a ``distinguished vertex'' for the boundary of each region $\mathcal{R}$, but these new conditions allow for this vertex to be placed arbitrarily on the boundary of $\mathcal{R}$. Therefore, in the proofs of Theorems \ref{thm:FreeMalnormMetric6} and \ref{thm:FreeMalnormMetric4T4} we can first ``place'' the distinguished vertex of the region $\mathcal{R}$ where we want it to be in $\partial\mathcal{R}$ (for example, we can choose that the distinguished vertex will be an internal vertex of the annular diagram $\mathcal{A}$ of degree at least three, or will be the initial vertex $\iota(I)$ of the attaching path $I$).

The new conditions are as follows; they can be roughly though of as Wise's graphical small cancellation conditions involving the symmetrised closure of a subset of the relators. Let $\mathbf{r}=\{R_1, \ldots, R_n\}$. We say that the presentation $\langle \mathbf{x}; \mathbf{r}, \mathbf{s}\rangle$ has the graphical $c^{\prime}_{\mathbf{r}^{\ast}}(\lambda)$ condition if every set $\{R_1^{W_1}, \ldots, R_n^{W_n}\}\cup\mathbf{s}$, where each $R_i^{W_i}$ is cyclically reduced, has the graphical $c^{\prime}(\lambda)$ condition. The $c_{\mathbf{r}^{\ast}}(m)$ and $t_{\mathbf{r}^{\ast}}(n)$ conditions are defined analogously.

\begin{theorem}
\label{thm:ConjugacyNon-Metric6}
Suppose that $G$ is given by the $C^{\prime}(1/6)$ small cancellation presentation $\langle \mathbf{x};\mathbf{r}\rangle$, and let $\varphi: F(\mathbf{x})\rightarrow \langle \mathbf{x}; \mathbf{r}\rangle$ be the natural map. Suppose that $\langle \mathbf{x};\mathbf{r}, \mathbf{s}\rangle$ is a graphical $c^{\prime}_{\mathbf{r}^{\ast}}(1/6)$ small cancellation presentation.

Suppose that $\mathbf{t}\subset F(\mathbf{x})$ is a set of words such that every word $V_{\mathbf{t}}(\mathbf{t})$ over $\mathbf{t}$ which is freely reduced over $\mathbf{t}$ is cyclically $\mathbf{r}^{\ast}$-reduced.
%
%
For all $U\in\langle\mathbf{s}\rangle$ and $V\in\langle\mathbf{t}\rangle$, if there exists $g\in G$ such that $g^{-1}\varphi(U)g=_G\varphi(V)$ then there exists $W\in F(\mathbf{x})$ such that $\varphi(W)=g$ and $W^{-1}UW=_{F(\mathbf{x})} V$.
In particular, there exists $g\in G$ such that $\langle \mathbf{s}\rangle_G^g\cap \langle \mathbf{t}\rangle_G\neq_G1$ if and only if there exists $W\in F(\mathbf{x})$ such that $\langle \mathbf{s}\rangle^{W}\cap \langle \mathbf{t}\rangle\neq_{F(\mathbf{x})}1$.
\end{theorem}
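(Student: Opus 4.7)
The plan is to mirror the proof of Theorem \ref{thm:FreeMalnormMetric6} essentially verbatim, exploiting the author's observation that the graphical hypothesis $c^{\prime}_{\mathbf{r}^{\ast}}(1/6)$ lets us place, independently for each $\mathbf{r}$-region $\mathcal{R}$ in our diagrams, the ``distinguished vertex'' of $\partial\mathcal{R}$ wherever we wish. Once a choice of distinguished vertex is fixed region-by-region, the corresponding tuple of cyclic rotations of the relators in $\mathbf{r}$ together with $\mathbf{s}$ satisfies the genuine graphical $c^{\prime}(1/6)$ condition, so every estimate of the form ``a single $(\mathbf{r}\cup\mathbf{s})^{\ast}$-piece has length strictly less than $\tfrac{1}{6}|\partial\mathcal{R}|$'' is available, exactly as in the classical setting.

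First I would establish the analogue of Lemma \ref{lem:FreeSbgps}: every word $U_{\mathbf{s}}(\mathbf{s})$ that is freely reduced over $\mathbf{s}$ is cyclically $\mathbf{r}^{\ast}$-reduced, and hence $\langle\mathbf{s}\rangle_G$ is free with basis $\mathbf{s}$. The argument of Lemma \ref{lem:FreeSbgps} is purely local on $\partial\mathcal{R}$: it only needs that a single attaching path of an $\mathbf{r}$-region into $\mathcal{D}_U$ decomposes into at most two $(\mathbf{r}\cup\mathbf{s})^{\ast}$-pieces. Placing the distinguished vertex of $\partial\mathcal{R}$ at the initial vertex $\iota(I)$ of the attaching path $I$, the graphical condition supplies the required bound.

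Next I would run the annular-diagram argument from Theorem \ref{thm:FreeMalnormMetric6} almost line by line. Assume that $\mathcal{A}$ is a reduced annular $\mathbf{r}^{\ast}$-diagram whose interior boundary is a cyclic shift and free reduction of $U=U_{\mathbf{s}}(\mathbf{s})$ and whose exterior boundary is a cyclic shift and free reduction of $V_{\mathbf{t}}(\mathbf{t})$; attach the subdiagram $\mathcal{D}_U$ from Lemma \ref{lem:diagramD_U} (applied with $m=7$, so each $p_k$ decomposes into at least five $(\mathbf{r}\cup\mathbf{s})^{\ast}$-pieces) along $\partial_i\mathcal{A}$, forming $\Delta$; and assume that $\mathcal{A}$ contains at least one region. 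Pick an $\mathbf{r}$-region $\mathcal{R}$ of $\mathcal{A}$ meeting $\partial\mathcal{D}_U$ along an attaching path $I$ containing an edge. The three sub-cases are: (a) $\mathcal{R}\cap\partial_e\mathcal{A}$ is empty; (b) $\mathcal{R}\cap\partial_e\mathcal{A}$ contains an edge but $\mathcal{R}$ shares no edge with another region of $\mathcal{A}$; and (c) $\mathcal{R}$ shares an edge with some region $\mathcal{S}$ and both meet $\partial_e\mathcal{A}$. In each sub-case Theorems \ref{thm:AnnStructureThick} and \ref{thm:AnnStructureThin} force a lower bound on $|I|$ as a fraction of $|\partial\mathcal{R}|$ (respectively $>\tfrac{2}{6}$, $=\tfrac{1}{2}$ and $>\tfrac{1}{6}$); combined with the piece bound coming from the graphical hypothesis, $I$ must straddle a sequence of at least three consecutive paths $p_k$, contradicting Lemma \ref{lem:connectingpathI}.

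The main obstacle is the bookkeeping in sub-case (c), where one simultaneously uses two $\mathbf{r}$-regions $\mathcal{R}$ and $\mathcal{S}$ of $\mathcal{A}$, together with the regions of $\mathcal{D}_U$ already present. One must verify that the distinguished vertices on $\partial\mathcal{R}$ and $\partial\mathcal{S}$ can be placed compatibly so that each of the three sub-arcs $I\subseteq \mathcal{R}\cap\partial\mathcal{D}_U$, $J\subseteq \mathcal{S}\cap\partial\mathcal{D}_U$, and the internal piece $\mathcal{R}\cap\mathcal{S}$ has length measured against a valid cyclic representative. Since the hypothesis $c^{\prime}_{\mathbf{r}^{\ast}}(1/6)$ is insensitive to which rotation is chosen per relator, one may fix these vertices region-by-region (e.g.\ at $\iota(I)$ on $\partial\mathcal{R}$ and at $\iota(J)$ on $\partial\mathcal{S}$), and the $<\tfrac{1}{6}$ bound on every piece that features in the estimate persists; the counting argument from Theorem \ref{thm:FreeMalnormMetric6} then yields the required contradiction.
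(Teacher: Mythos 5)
Your proposal is correct and takes essentially the same route as the paper, which in fact gives no separate proof of this theorem: it states only that the argument is ``essentially identical'' to that of Theorem \ref{thm:FreeMalnormMetric6} because the $c^{\prime}_{\mathbf{r}^{\ast}}(1/6)$ hypothesis lets one place the distinguished vertex of each $\mathbf{r}$-region arbitrarily on its boundary — precisely the observation you exploit. One small caveat: in sub-case (c) the lower bound $|I|>\frac16|\partial\mathcal{R}|$ only forces $I$ (and likewise $J$) to straddle \emph{two} consecutive paths, and the contradiction comes from the shared vertex $\tau(I)=\iota(J)$ forcing some $p_j$ to decompose into only two pieces rather than from Lemma \ref{lem:connectingpathI}; your closing deferral to ``the counting argument from Theorem \ref{thm:FreeMalnormMetric6}'' covers this correctly.
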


\begin{theorem}
\label{thm:ConjugacyNon-Metric4T4}
Suppose that $G$ is given by the $C^{\prime}(1/4)-T(4)$ small cancellation presentation $\langle \mathbf{x};\mathbf{r}\rangle$, and let $\varphi: F(\mathbf{x})\rightarrow \langle \mathbf{x}; \mathbf{r}\rangle$ be the natural map. Suppose that $\langle \mathbf{x};\mathbf{r}, \mathbf{s}\rangle$ is a graphical $c^{\prime}_{\mathbf{r}^{\ast}}(1/4)-t_{\mathbf{r}^{\ast}}(4)$ small cancellation presentation.

Suppose that $\mathbf{t}\subset F(\mathbf{x})$ is a set of words such that every word $V_{\mathbf{t}}(\mathbf{t})$ over $\mathbf{t}$ which is freely reduced over $\mathbf{t}$ is cyclically $\mathbf{r}^{\ast}$-reduced.
%
%
For all $U\in\langle\mathbf{s}\rangle$ and $V\in\langle\mathbf{t}\rangle$, if there exists $g\in G$ such that $g^{-1}\varphi(U)g=_G\varphi(V)$ then there exists $W\in F(\mathbf{x})$ such that $\varphi(W)=g$ and $W^{-1}UW=_{F(\mathbf{x})} V$.
In particular, there exists $g\in G$ such that $\langle \mathbf{s}\rangle_G^g\cap \langle \mathbf{t}\rangle_G\neq_G1$ if and only if there exists $W\in F(\mathbf{x})$ such that $\langle \mathbf{s}\rangle^{W}\cap \langle \mathbf{t}\rangle\neq_{F(\mathbf{x})}1$.
\end{theorem}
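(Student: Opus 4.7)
The plan is to follow the proof of Theorem \ref{thm:FreeMalnormMetric4T4} line-by-line, since the two statements differ only in the style of small cancellation hypothesis imposed on $\langle\mathbf{x};\mathbf{r},\mathbf{s}\rangle$. First, I would apply Lemma \ref{lem:FreeSbgps} (which requires only that $\langle\mathbf{x};\mathbf{r},\mathbf{s}\rangle$ satisfy $C^{\prime}(1/4)$, and so holds under the graphical $c^{\prime}_{\mathbf{r}^{\ast}}(1/4)$ hypothesis) to conclude that every freely reduced word $U_{\mathbf{s}}(\mathbf{s})$ is cyclically $\mathbf{r}^{\ast}$-reduced. Then, given $U\in\langle\mathbf{s}\rangle$ and $V\in\langle\mathbf{t}\rangle$ with $g^{-1}\varphi(U)g=_G\varphi(V)$, form an annular $\mathbf{r}^{\ast}$-diagram $\mathcal{A}$ whose interior boundary carries a cyclically reduced shift of $U_{\mathbf{s}}(\mathbf{s})$ and whose exterior boundary carries a cyclically reduced shift of $V_{\mathbf{t}}(\mathbf{t})$. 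It suffices to show $\mathcal{A}$ contains no regions, for then the conjugation of $U$ to $V$ is witnessed in $F(\mathbf{x})$ itself.

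Next, attach the disk diagram $\mathcal{D}_U$ supplied by Lemma \ref{lem:diagramD_U} to $\partial_i\mathcal{A}$, obtaining a composite $(\mathbf{r}\cup\mathbf{s})^{\ast}$-diagram $\Delta$ in which each boundary path $p_i$ of $\mathcal{D}_U$ decomposes as at least three $(\mathbf{r}\cup\mathbf{s})^{\ast}$-pieces. Suppose for contradiction $\mathcal{A}$ contains a region $\mathcal{R}$ sharing an edge with $\mathcal{D}_U$ along an attaching path $I$. I would then run exactly the same case analysis as in the proof of Theorem \ref{thm:FreeMalnormMetric4T4}: either (a) $\mathcal{R}\cap\partial_e\mathcal{A}$ is empty, invoking Theorem \ref{thm:AnnStructureThick}(ii) to force $|I|>\tfrac{1}{4}|\partial\mathcal{R}|$ and hence a degree-three interior vertex violating $T(4)$; or (b) $\mathcal{R}\cap\partial_e\mathcal{A}$ contains an edge and $\mathcal{R}$ is isolated from its neighbours, in which case Theorem \ref{thm:AnnStructureThin} gives $|I|=\tfrac{1}{2}|\partial\mathcal{R}|$ and Lemma \ref{lem:connectingpathI} yields a contradiction; or (c) $\mathcal{R}$ shares an edge with a neighbour $\mathcal{S}$, in which case the $T(4)$ condition at the common vertex $\tau(I)=\iota(J)$ forces a single-piece $p_i$, again a contradiction. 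The classical structure theorems \ref{thm:AnnStructureThick} and \ref{thm:AnnStructureThin} apply verbatim because the sub-presentation $\langle\mathbf{x};\mathbf{r}\rangle$ is assumed classically $C^{\prime}(1/4)-T(4)$.

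The only genuinely new issue, and the hard part of the writeup, lies in justifying the piece-length and vertex-degree estimates under the weaker graphical hypothesis $c^{\prime}_{\mathbf{r}^{\ast}}(1/4)-t_{\mathbf{r}^{\ast}}(4)$ rather than the classical metric condition on the full symmetrised closure of $\mathbf{r}\cup\mathbf{s}$. The key observation is that the graphical condition, as defined in the excerpt, quantifies over every choice of cyclically reduced conjugates $R_i^{W_i}$ of the relators in $\mathbf{r}$; equivalently, it lets us \emph{pre-position} the distinguished basepoint of each region anywhere on its boundary before measuring pieces. Consequently, in each of cases (a)–(c) above I would, before appealing to the small cancellation bound, first choose the distinguished vertex of the offending region $\mathcal{R}$ (and of $\mathcal{S}$ when it appears) to be the vertex the argument requires: typically an internal vertex of $\Delta$ at which we wish to apply $t_{\mathbf{r}^{\ast}}(4)$, or an endpoint of the attaching path $I$ at which we wish to apply the metric bound. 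With this placement made, every piece-length inequality and every $T(4)$ vertex-degree inequality used in the proof of Theorem \ref{thm:FreeMalnormMetric4T4} carries over unchanged, and the contradiction is reached in exactly the same way. The main obstacle is therefore purely bookkeeping: checking, in each subcase, that a single consistent placement of the distinguished vertices on the relevant regions makes all the required graphical inequalities active simultaneously, which indeed it does because each subcase only invokes a piece-length or vertex-degree estimate at one specific vertex at a time.
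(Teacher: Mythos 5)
Your proposal matches the paper's own treatment: the paper does not write out a separate proof of Theorem \ref{thm:ConjugacyNon-Metric4T4}, but justifies it exactly as you do, by rerunning the proof of Theorem \ref{thm:FreeMalnormMetric4T4} verbatim and observing that the graphical $c^{\prime}_{\mathbf{r}^{\ast}}(1/4)-t_{\mathbf{r}^{\ast}}(4)$ condition permits the distinguished vertex of each region to be placed wherever the argument needs it (e.g.\ at an internal vertex where $T(4)$ is invoked, or at $\iota(I)$). Your case analysis and the bookkeeping remark about consistent basepoint placement are faithful to, and slightly more explicit than, the paper's sketch.
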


\section{Malcharacteristic free subgroups of triangle groups}
\label{sec:MalcharTriangle}
In this section we obtain examples of malcharacteristic subgroups of triangle groups. In particular, Lemma \ref{lem:malcharTriange} proves that the subgroup $M$ of $T_{i, j, k}$, $i, j, k\geq6$, stated in Section \ref{sec:Malchar} is malcharacteristic and free of rank two. At the end of the section we apply this to prove Theorem \ref{thm:intro1} and Theorem \ref{thm:GeneralTriangle1}.

\p{\boldmath{$M$} is free and malnormal}
Recall from Section \ref{sec:Malchar} that the subgroup $M$ of $T_{i, j, k}$ is defined as $M:=\langle x, y\rangle$, with $x$ and $y$ as follows with $\rho\gg\max(i,j,k)$.
\begin{align*}
x&:=(ab^{-1})^{3}(a^2b^{-1})^{3}(ab^{-1})^3(a^2b^{-1})^4\ldots (ab^{-1})^3(a^2b^{-1})^{\rho+2}\\
y&:=(ab^{-1})^3(a^2b^{-1})^{\rho+3}(ab^{-1})^3(a^2b^{-1})^{\rho+4}\ldots (ab^{-1})^{3}(a^2b^{-1})^{2\rho+2}
\end{align*}
We now prove that this subgroup $M$ of $T_{i, j, k}$ is free of rank two and malnormal.

\begin{lemma}\label{lem:malnormal}
The subgroup $M=\langle x, y\rangle$ of $T_{i, j, k}=\langle a, b; a^i, b^j, (ab)^k\rangle$, $i, j, k\geq6$, is a malnormal subgroup of $T_{i,j ,k}$ and is free of rank two.
\end{lemma}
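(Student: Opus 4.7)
The plan is to apply Theorem~\ref{thm:FreeMalnormMetric} to the enlarged presentation $\langle a,b; a^i, b^j, (ab)^k, x, y\rangle$, taking $\mathbf{r}=\{a^i, b^j, (ab)^k\}$ and $\mathbf{s}=\{x,y\}$. The conclusion of that theorem is precisely what is claimed here: $\langle\mathbf{s}\rangle_G = M$ is malnormal in $G=T_{i,j,k}$ and free with basis $\{x,y\}$. So the two hypotheses to verify are that neither $x$ nor $y$ is a proper power, and that the enlarged presentation satisfies $C'(1/6)$ or $C'(1/4)$-$T(4)$.

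The ``no proper powers'' check is by inspection. Each of $x$ and $y$ decomposes as a concatenation of blocks $(ab^{-1})^3(a^2b^{-1})^n$ with strictly increasing $n$ (ranging from $3$ to $\rho+2$ for $x$, and from $\rho+3$ to $2\rho+2$ for $y$). A proper power $W^m$, $m\geq 2$, would force these blocks to repeat periodically, incompatible with the exponents all being distinct.

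The technical heart of the proof is the small cancellation verification, and I would split the pieces into three types. First, pieces among the symmetrised closure of $\{a^i, b^j, (ab)^k\}$ are single letters ($a$ or $b$), giving ratio at most $1/\min(i,j,2k)\leq 1/6$ for $i,j,k\geq 6$. Second, pieces between $\{a^i, b^j, (ab)^k\}^{\ast}$ and $\{x,y\}^{\ast}$ have length at most two, since the longest $a$-run in $x$ or $y$ is $a^2$ (from the $(a^2b^{-1})^n$ blocks) and every $b$ appearing in $x,y$ is a $b^{-1}$. Third, pieces among cyclic shifts and inverses of $\{x,y\}$ are controlled by the uniqueness of the blocks $(ab^{-1})^3(a^2b^{-1})^n$: the exponent sets for $x$ and $y$ are disjoint, so for $\rho$ sufficiently large the overlaps have length $O(\rho)$ whereas $|x|,|y|=\Theta(\rho^2)$, making the ratio arbitrarily small. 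This analysis is essentially the triangle-group analogue of the piece-counting already carried out for $F(a,b)$ in Section~\ref{sec:MalcharF2}.

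The delicate step, and the main obstacle to a clean application of Theorem~\ref{thm:FreeMalnormMetric}, is the second type of piece: the subword $a^2$ sits inside $a^i$ with ratio $2/i$, which is as large as $1/3$ when $i=6$, so the strict metric condition $C'(1/6)$ and even $C'(1/4)$ fails for the smallest allowed values of $i,j,k$. The plan for overcoming this within $i,j,k\geq 6$ is to lean on the $T(4)$ condition: no three of the cyclically-reduced relators in $\{a^i, b^j, (ab)^k, x, y\}^{\ast}$ can meet at a common interior vertex of a reduced diagram (by inspection of their letter-types), so one can re-run the disc and annular arguments of Theorem~\ref{thm:FreeMalnormMetric4T4} tolerating this slightly larger piece in $a^i$. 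Alternatively, one could pass to the graphical variants of Theorems~\ref{thm:ConjugacyNon-Metric6} and~\ref{thm:ConjugacyNon-Metric4T4}, which require only $c'_{\mathbf{r}^{\ast}}$ on the enlarged relator set rather than the full symmetrised $C'$ condition. Once the appropriate condition is in place, Theorem~\ref{thm:FreeMalnormMetric} simultaneously delivers that $M=\langle x,y\rangle$ is free of rank two on $\{x,y\}$ and malnormal in $T_{i,j,k}$.
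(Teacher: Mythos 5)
Your approach is identical to the paper's: the entire published proof of Lemma~\ref{lem:malnormal} consists of the assertion that the set $\{a^i, b^j, (ab)^k, x, y\}$ satisfies $C^{\prime}(1/4)-T(4)$, followed by an appeal to Theorem~\ref{thm:FreeMalnormMetric}. Your proper-power check is correct (the strictly increasing exponents on the $(a^2b^{-1})$-blocks rule out periodicity), and your classification of the pieces is accurate: pieces among the triangle relators are single letters, pieces among the shifts of $x^{\pm1},y^{\pm1}$ have length $O(\rho)$ against lengths $\Theta(\rho^2)$, and the only length-two piece between the two families is $a^{\pm2}$, arising from the $(a^2b^{-1})$-blocks.

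The point you flag as delicate is a genuine one, and the paper's one-line verification does not address it: since $a^2$ is a common prefix of $a^i$ and of a cyclic shift of $x$, it is a piece, and the decomposition $a^i=a^2\cdot a^{i-2}$ requires $2<i/4$ for $C^{\prime}(1/4)$, which fails for $6\le i\le 8$. Only the relator $a^{\pm i}$ is affected ($x$ and $y$ contain only isolated $b^{-1}$'s, so the pieces in $b^{\pm j}$ and $(ab)^{\pm k}$ are single letters), and the condition holds without further argument once $i\ge 9$. Your two proposed repairs are the right ideas --- in particular, an $a^{\pm i}$-region can attach to $\partial\mathcal{D}_U$ only along an $a$-run of length at most $2<\tfrac12 i$, so the Greendlinger-type counts in the proof of Theorem~\ref{thm:FreeMalnormMetric4T4} can be pushed through, and alternatively the relaxed conditions of Theorems~\ref{thm:ConjugacyNon-Metric6}--\ref{thm:ConjugacyNon-Metric4T4} are designed for exactly this situation --- but as written they are sketches rather than proofs. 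So your argument establishes the lemma only modulo carrying out one of these patches for the finitely many exponents $i\in\{6,7,8\}$; this is a gap, but it is one you have inherited from, and in fact made visible in, the paper's own proof rather than one you have introduced.
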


\begin{proof}
The set $\{a^i, b^j, (ab)^k, x, y\}$ has the $C^{\prime}(1/4)-T(4)$ small cancellation condition. The result then follows from Theorem \ref{thm:FreeMalnormMetric}.
\end{proof}

\p{The automorphisms of \boldmath{$T_{i, j, k}$}}
After Lemma \ref{lem:malnormal}, in order to prove that $M$ is malcharacteristic in $T_{i, j, k}$ it is sufficient to prove that if $\delta\in\aut(T_{i, j, k})$ is such that $\delta(M)\cap M\neq1$ then $\delta\in\inn(T_{i, j, k})$. We therefore need to understand the outer automorphism groups of triangle groups; we do this in Lemmas \ref{lem:ExtendingZieschang} and \ref{lem:transversal}. The proof of Lemma \ref{lem:ExtendingZieschang} follows closely certain proofs in a paper of Zieschang \cite{zieschang1976triangle} (but Lemma \ref{lem:ExtendingZieschang} does not follow from Zieschang's paper).

\begin{lemma}
\label{lem:ExtendingZieschang}
Suppose that $T_{i, j, k}$, with $i, j, k\geq6$, and $T_{t_1, t_2, t_3}$ define the same presentation, up to a permutation of the generators. Write $G=\langle a, b; a^i, b^j, (ab)^k\rangle$ and $H=\langle x_1, x_2, x_3; x_1^{t_1}, x_2^{t_2}, x_3^{t_3}, x_1x_2x_3\rangle$. If $f: G\rightarrow H$ is an isomorphism then there exists an inner automorphism $\gamma\in\inn(H)$, integers $p, q\in\{1, 2, 3\}$ with $p\neq q$, and $\epsilon=\pm1$ such that $f\gamma(a)=x_{p}^{\epsilon}$ and $f\gamma(b)=x_q^{\epsilon}$.
\end{lemma}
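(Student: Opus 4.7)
The plan is to adapt Zieschang's classification of isomorphisms of hyperbolic triangle groups, exploiting three structural facts about $H = T_{t_1,t_2,t_3}$ with $t_1,t_2,t_3\geq 6$: (a) every non-trivial element of finite order in $H$ is conjugate to a unique non-trivial power of exactly one of the generators $x_1,x_2,x_3$; (b) the centraliser in $H$ of any non-trivial $x_p^s$ equals $\langle x_p\rangle$; and (c) the multiset $\{t_1,t_2,t_3\}$ coincides with $\{i,j,k\}$ because $G$ and $H$ define the same presentation up to a permutation of generators.

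First I would use (a) to write $f(a)=h\,x_p^{s_a}\,h^{-1}$ for some $h\in H$, $p\in\{1,2,3\}$ and $s_a\in\mathbb{Z}$ with $\operatorname{ord}(x_p^{s_a})=i$; picking any $g\in G$ with $f(g)=h$ and replacing $f$ by $f\gamma_g$, we may assume outright that $f(a)=x_p^{s_a}$. Applying (a) again to $f(b)$ gives $f(b)=h'\,x_q^{s_b}\,(h')^{-1}$ with $\operatorname{ord}(x_q^{s_b})=j$. The next step is to argue $p\neq q$: if instead $p=q$, then $f(a)$ and $f(b)$ both lie in conjugates of $\langle x_p\rangle$, and by considering the action of $H$ on the hyperbolic plane (vertex stabilisers are precisely the conjugates of the $\langle x_\ell\rangle$), the subgroup generated by two elements supported at conjugates of a single vertex stabiliser is a free product of cyclic groups and in particular contains no element of order $k\neq i$; this contradicts surjectivity of $f$, since $f(ab)$ has order $k$.

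With $p\neq q$ in hand, I would use the triangle relation to align $f(b)$ with $\langle x_q\rangle$ and to pin down the exponents. Geometrically, $x_p^{s_a}$ and $h'x_q^{s_b}(h')^{-1}$ are rotations of the hyperbolic plane about two points, and their product has finite order $k$ exactly when those two points lie at distance equal to the side of the fundamental triangle of $T_{t_1,t_2,t_3}$ joining vertices of types $p$ and $q$. This uniqueness forces $h'\in\langle x_p\rangle\cdot\langle x_q\rangle$, so composing with a further inner automorphism by a suitable element of $\langle x_p\rangle$ (which commutes with $x_p^{s_a}$ by (b), leaving $f(a)$ unchanged) we may assume $f(b)=x_q^{s_b}$. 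Finally, the orders $\operatorname{ord}(x_p^{s_a}x_q^{s_b})=k$ together with the normalisation $t_p=i$, $t_q=j$ (forced by comparing orders when the $t_\ell$ are distinct, and by a separate verification using the triangle relation when they coincide) leave only finitely many candidate pairs $(s_a,s_b)$; a direct computation of the order of $x_p^{s_a}x_q^{s_b}$ (via its trace in the faithful $\mathrm{PSL}_2(\mathbb{R})$-representation of $H$) rules out every pair except $s_a=s_b=\epsilon$ with $\epsilon=\pm 1$.

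The hard part will be the step extracting $h'\in \langle x_p\rangle\cdot\langle x_q\rangle$ from the finite-order constraint on the product, together with the case analysis needed when $\{i,j,k\}$ has repeated entries so that (c) allows several a priori correspondences $a\leftrightarrow x_p$. Both are essentially what Zieschang carries out, and the argument is most cleanly phrased geometrically via the action on the hyperbolic plane; a purely combinatorial treatment is available by translating through the $\mathrm{PSL}_2(\mathbb{R})$-representation and using that fixed points of elliptic isometries are determined by the isometry.
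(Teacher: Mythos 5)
Your opening move coincides with the paper's: both proofs start from the classification of torsion in $H$ (the paper cites Zieschang's Satz IV.12) to write $f(a)$ and $f(b)$ as conjugates of powers of generators, and both then normalise $f(a)=x_p^{s_a}$ by an inner automorphism. From that point the paper proceeds combinatorially: it views $H$ through the $C^{\prime}(1/4)$--$T(4)$ presentation $\langle x_p, x_q; x_p^{t_p}, x_q^{t_q}, (x_px_q)^{t_r}\rangle$ and uses Dehn's algorithm on powers of the finite-order element $f\gamma(ab)=x_p^{\xi_p}Ux_q^{\xi_q}U^{-1}$ to force $U$ to be empty, and then again to force $\xi_p=\xi_q=\pm1$; the hypothesis $i,j,k\geq6$ enters precisely in the length inequalities of that argument. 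Your plan replaces both of these steps with hyperbolic geometry, which is a legitimately different route in principle.

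However, the two steps you replace are exactly the ones you leave as acknowledged gaps, and the geometric justification you offer for the first is not correct as stated. The product of two elliptic isometries with rotation angles $2\pi s_a/t_p$ and $2\pi s_b/t_q$ about points $P,Q$ has an order determined jointly by the angles \emph{and} by $d(P,Q)$; it is not true that the product ``has finite order $k$ exactly when'' $P,Q$ are at the side length of the fundamental triangle. For general exponents $s_a,s_b$ the product of the rotations about two \emph{adjacent} vertices need not have order $k$ at all, and one must rule out that some non-adjacent pair of vertices (equivalently, some $h'\notin\langle x_p\rangle\langle x_q\rangle$) yields an elliptic product of the right order. Establishing this dichotomy — together with the final elimination of all exponent pairs other than $s_a=s_b=\epsilon$ — is the entire content of the lemma beyond the standard torsion classification, and it is where the hypothesis $t_\ell\geq6$ must be used; your plan never engages with where that bound enters. (A smaller issue: your argument that $p\neq q$ via ``no element of order $k\neq i$'' breaks down when $i=j=k$, which is the main case of the paper; surjectivity onto the non-cyclic abelianisation of $H$ is the clean fix.) So while the geometric route could presumably be completed along Zieschang's lines, as written the proposal defers the two decisive computations and supports them with a uniqueness claim that would need to be corrected and proved.
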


\begin{proof}
We may assume $f(a)=U_1x_p^{\xi_p}U_1^{-1}$ and $f(b)=U_2x_q^{\xi_q}U_2^{-1}$, with $p, q\in\{1, 2, 3\}$ not necessarily distinct, and with $|\xi_p|\leq\frac12t_p$ and $|\xi_q|\leq\frac12t_q$ \cite[Satz IV.12]{Zieschang1970planar} (see also \cite[Theorem 4.3.2]{Fine1999algebraic}). Therefore, there exists an inner automorphism $\gamma\in\inn(H)$ such that $f\gamma(a)=x_p^{\xi_p}$ and $f\gamma(b)=Ux_q^{\xi_q}U^{-1}$, where $U$ does not begin with $x_{p}^{\pm1}$ and does not end with $x_q^{\pm1}$.
We view $H$ using the $C^{\prime}(1/4)-T(4)$ presentation $\mathcal{P}_H=\langle x_p, x_q; x^{t_p}, x^{t_q}, (x_px_q)^{t_r}\rangle$. Let $\mathbf{s}=\{x^{t_p}, x^{t_q}, (x_px_q)^{t_r}\}$. Assume that the word $U$ is Dehn reduced (that is, $U$ is $\mathbf{s}^{\ast}$-reduced).

Suppose that $U$ is not the empty word. Now, the word $x_p^{\xi_p}Ux_q^{\xi_q}U^{-1}$ has finite order in $H$, and so there exist some $n>1$ such that $(x_p^{\xi_p}Ux_q^{\xi_q}U^{-1})^n$ contains a subword $W$ of some $R\in\mathbf{s}^{\ast}$ with $|W|>\frac23|R|$ \cite[Theorem V.4.4.ii]{L-S}. Then $W$ must contain letters from $U$ or $U^{-1}$ and at least one letter from $x_p^{\xi_p}$ or $x_q^{\xi_q}$. Therefore, $R$ is conjugate to $(x_px_q)^{\pm t_r}$. Since $t_q>2$, the word $x_q^{-\epsilon_q}x_p^{\xi_p}x_q^{\epsilon_q}$ is not contained in $W$, and hence if $W$ contains a part of $x_p^{\xi_p}$ then $W$ contains a single letter from $x_p^{\xi_p}$, and this is either at the start or the end of $W$. Similarly, since $t_p>2$ the word $x_p^{-\epsilon_p}x_q^{\xi_q}x_p^{\epsilon_p}$ is not contained in $W$ and so the analogous statement holds for $W$ containing a part of $x_q^{\xi_q}$.

We therefore have two cases: either $W$ contains $x_{s_1}^{\pm 1}$ from $x_{s_1}^{\xi_{s_1}}$ and the other letters are from one of $U$ or $U^{-1}$, or $W$ begins with $x_{s_1}^{\pm1}$ from $x_{s_1}^{\xi_{s_1}}$, ends with $x_{s_2}^{\pm1}$ from $x_{s_2}^{\xi_{s_2}}$, $s_1\neq s_2$, and all other letters are from precisely one of $U$ or $U^{-1}$. In the first case, $U$ contains a part $W^{\prime}$ from the defining relation $(x_px_q)^{t_r}$ with $|W^{\prime}|=|W|-1>\frac23\cdot2t_r-1>t_r=\frac12|R|$ (as $t_r>3$). Hence, $U$ is not Dehn reduced, a contradiction. In the second case, $W$ is again part of the relation $(x_px_q)^{\pm t_r}$, therefore $|U|=|W|-2>\frac23\cdot2t_r-2\geq t_r$ (as $t_r\geq 6$), and so again $U$ is not Dehn reduced, a contradiction. Therefore, $U$ is the empty word, and we have $f\gamma(a)=x_p^{\xi_p}$ and $f\gamma(b)=x_q^{\xi_q}$.

We now prove that there exists some $\epsilon=\pm1$ such that $\xi_p=\epsilon=\xi_q$, which proves the result.
Suppose otherwise.
Then the longest possible subword of a relator contained in the word $(x_p^{\xi_p}x_q^{\xi_q})^{t_r}$ is either $x_p^{\epsilon_p}x_q^{\epsilon_q}x_p^{\epsilon_p}$ or $x_q^{\epsilon_q}x_p^{\epsilon_p}x_q^{\epsilon_q}$, so the word $(x_p^{\xi_p}x_q^{\xi_q})^{t_r}$ is Dehn reduced. As this word is non-empty and Dehn reduced it is non-trivial, a contradiction.
\end{proof}

Write $\Psi$ for the set consisting of the following twelve maps. Note that each map defines an automorphism of $F(a, b)$.
\begin{align*}
\psi_{(1, \epsilon)}: \;a&\mapsto a^{\epsilon} &\psi_{(2, \epsilon)}:\; a &\mapsto a^{\epsilon}&\psi_{(3, \epsilon)}:\;a &\mapsto (ab)^{\epsilon}\\
b&\mapsto b^{\epsilon} &b&\mapsto (ab)^{-\epsilon} &b &\mapsto b^{-\epsilon}
\\\\
\psi_{(4, \epsilon)}: \;a&\mapsto b^{\epsilon} &\psi_{(5, \epsilon)}:\; a &\mapsto b^{\epsilon}&\psi_{(6, \epsilon)}:\;a &\mapsto (ab)^{\epsilon}\\
b&\mapsto a^{\epsilon} &b&\mapsto (ab)^{-\epsilon} &b &\mapsto a^{-\epsilon}
\end{align*}

\begin{lemma}
\label{lem:transversal}
Let $i, j, k\geq6$. Then:
\begin{enumerate}
\item If $i, j, k$ are pairwise non-equal then $\Psi_{i, j, k}:=\{\psi_{(1, 1)}, \psi_{(1, -1)}\}$ is a transversal for $\out(T_{i, j, k})$.
\item If $i=j\neq k$ then $\Psi_{i, i, k}:=\{\psi_{(1, 1)}, \psi_{(1, -1)}, \psi_{(4, 1)}, \psi_{(4, -1)}\}$ is a transversal for $\out(T_{i, j, k})$.
\item If $i=j=k$ then $\Psi$ is a transversal for $\out(T_{i, i, i})$.
\end{enumerate}
\end{lemma}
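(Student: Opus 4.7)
The plan is to apply Lemma~\ref{lem:ExtendingZieschang} to each automorphism $\delta\in\aut(T_{i,j,k})$, viewing $\delta$ as an isomorphism onto the three-generator presentation $\langle x_1,x_2,x_3;x_1^i,x_2^j,x_3^k,x_1x_2x_3\rangle$ under the identification $x_1=a$, $x_2=b$, $x_3=(ab)^{-1}$. The lemma produces $\gamma\in\inn(T_{i,j,k})$ together with distinct indices $p,q\in\{1,2,3\}$ and a sign $\epsilon=\pm1$ such that $\delta\gamma(a)=x_p^{\epsilon}$ and $\delta\gamma(b)=x_q^{\epsilon}$. Since $\delta\gamma$ is an automorphism, the orders of the images must match the orders of $a$ and $b$, forcing $t_p=i$ and $t_q=j$, where $(t_1,t_2,t_3)=(i,j,k)$.

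I would then read off the admissible triples $(p,q,\epsilon)$ in each case. In Case~(1), the pairwise distinctness of $i,j,k$ forces $(p,q)=(1,2)$, giving the two representatives $\psi_{(1,\pm1)}$. In Case~(2), the constraint $t_p=t_q=i=j$ permits $(p,q)\in\{(1,2),(2,1)\}$, yielding $\psi_{(1,\pm1)}$ and $\psi_{(4,\pm1)}$. In Case~(3), every ordered pair of distinct indices is allowed, producing six pairs times two signs, namely the twelve maps of $\Psi$. Some care is needed in the bookkeeping: whenever $\delta\gamma(a)$ or $\delta\gamma(b)$ equals $x_3^{\pm1}=(ab)^{\mp1}$, the inversion is absorbed into the sign convention used in the definitions of $\psi_{(3,\epsilon)}$ and $\psi_{(6,\epsilon)}$.

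Two auxiliary verifications complete the argument. First, each listed $\psi_{(p,\epsilon)}$ must actually define an automorphism of $T_{i,j,k}$; this is checked by substituting its images into the relators $a^i$, $b^j$, $(ab)^k$ and invoking the conjugation identity $(ba)^k=b(ab)^kb^{-1}$ together with whichever of the equalities $i=j$ or $i=j=k$ is in force in the given case. Second, the listed representatives must lie in distinct cosets of $\inn(T_{i,j,k})$. Since inner automorphisms act trivially on the abelianization $T_{i,j,k}^{\mathrm{ab}}$, it suffices to show that the induced integer $2\times2$ matrices on this abelianization are pairwise distinct. In Case~(1) they are $\pm I$, distinguished because the generators of $T_{i,j,k}^{\mathrm{ab}}$ have order at least $6$; in Case~(2) one additionally has the two off-diagonal swap matrices with entries $\pm1$, which remain distinct modulo the relation $k(\alpha+\beta)=0$; in Case~(3) the abelianization is $(\mathbb{Z}/i)^2$, and the twelve matrices turn out to be pairwise distinct entry by entry once $i\geq3$.

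The main obstacle I anticipate is the clerical load of Case~(3): matching each of the six ordered pairs $(p,q)$ and each of the two signs to exactly one of the twelve $\psi_{(r,\epsilon)}$ without double-counting, and then verifying the abelianization comparison across all twelve maps. Everything else reduces to a direct application of Lemma~\ref{lem:ExtendingZieschang} together with routine substitution into the defining relators.
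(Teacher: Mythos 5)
Your overall strategy is the same as the paper's: apply Lemma \ref{lem:ExtendingZieschang} under the identification $x_1=a$, $x_2=b$, $x_3=(ab)^{-1}$, use the orders of $a$, $b$ and the $x_p$ to restrict the admissible pairs $(p,q)$ in each case, and then check by substitution into the relators which of the resulting maps define automorphisms. That part of your argument is correct, and it is essentially all the paper's own proof does. The genuine problem is your final step: verifying that the listed representatives lie in pairwise distinct cosets of $\inn(T_{i,j,k})$ by comparing their actions on the abelianization. The abelianization of $T_{i,j,k}$ is $\mathbb{Z}^2/\langle(i,0),(0,j),(k,k)\rangle$, which has order $\gcd(ij,ik,jk)$ and can collapse badly. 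For example $T_{6,7,11}^{\mathrm{ab}}$ is trivial (from $6a=0$, $7b=0$, $11(a+b)=0$ one gets $a=b=0$), so in Case (1) your claim that the generators of $T_{i,j,k}^{\mathrm{ab}}$ have order at least $6$ is false, and $\pm I$ cannot be distinguished this way. Likewise in Case (2) with $\gcd(i,k)=1$, e.g.\ $T_{6,6,7}$: the abelianization is cyclic of order $6$ with $b\equiv-a$, so $\psi_{(4,1)}$ induces the same map as $\psi_{(1,-1)}$ (both act as $-1$) and $\psi_{(4,-1)}$ induces the identity; the four representatives fall into only two classes on the abelianization. Only Case (3), where $T_i^{\mathrm{ab}}\cong(\mathbb{Z}/i)^2$, survives your computation.

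To repair the distinctness step you need an argument that does not factor through the abelianization. One route: for $i,j,k\geq6$ the triangle group is hyperbolic, so the maximal finite cyclic subgroups $\langle a\rangle$, $\langle b\rangle$, $\langle ab\rangle$ are self-normalizing (an element normalizing $\langle a\rangle$ fixes the unique fixed point of $a$ in $\mathbb{H}^2$, hence lies in $\langle a\rangle$). Thus if, say, $\psi_{(1,-1)}=\gamma_g$ were inner, then $g$ would normalize $\langle a\rangle$, forcing $g\in\langle a\rangle$ and hence $a^{-1}=g^{-1}ag=a$, contradicting $i\geq6$; the remaining comparisons are handled similarly by tracking to which of the conjugacy classes of $\langle a\rangle$, $\langle b\rangle$, $\langle ab\rangle$ each $\psi$ sends $\langle a\rangle$ and $\langle b\rangle$, together with the sign of the induced automorphism of that cyclic group. (Alternatively, one can quote the classical computation of $\out$ of a hyperbolic von Dyck group via rigidity.) The paper's own one-line proof does not spell this out either, but your abelianization argument, as written, is incorrect in Cases (1) and (2).
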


\begin{proof}
By Lemma \ref{lem:ExtendingZieschang}, if $\delta\in\aut(T_{i, j, k})$ then there exists some $\gamma\in\inn(T_{i, j, k})$ such that $\delta\gamma\in\Psi$. The result follows by checking which elements of $\Psi$ define automorphisms of $T_{i, j, k}$ for each of the three cases.
\end{proof}

\p{The images of \boldmath{$x$} and \boldmath{$y$} under automorphisms}
Let $\mathbf{s}:=\{x, y\}\subset F(a, b)$, with $x$ and $y$ words as defined at the start of this section.
Our final step before proving that $M$ is malcharacteristic in $T_{i, j, k}$ is to prove that for each $\psi\in\Psi$, every word $W_{\psi(\mathbf{s})}(\psi(\mathbf{s}))$ over $\psi(\mathbf{s})$ which is freely reduced over $\psi(\mathbf{s})$ is cyclically Dehn reduced in $T_{i, j, k}$. This allows us to apply Theorem \ref{thm:FreeMalnormMetric4T4} to $\psi(\mathbf{s})$. Lemmas \ref{lem:automorphicorbit1}--\ref{lem:automorphicorbit3} now analyse these words $W_{\psi(\mathbf{s})}(\psi(\mathbf{s}))$.




\begin{lemma}
\label{lem:automorphicorbit1}
Let $T_{i, j, k}=\<a, b; a^i, b^j, (ab)^k\>$, with $i, j, k\geq6$. Suppose $\psi$ is a map contained in the set $\Psi$, and we shall write $A:=\psi(a)\psi(b)^{-1}$, $B:=\psi(a)^2\psi(b)^{-1}$. Then for all $p, q\geq 0$ the word $B^pA^3B^q$ is freely reduced over $F(a, b)$ and does not contain $a^{\pm4}$, $b^{\pm4}$, $((ab)^3a)^{\pm1}$ or $(b(ab)^3)^{\pm1}$, and so is Dehn reduced.
\end{lemma}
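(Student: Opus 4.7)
The proof is a direct case analysis across the twelve maps $\psi \in \Psi$. The plan is to tabulate the pair $(A, B) = (\psi(a)\psi(b)^{-1}, \psi(a)^2\psi(b)^{-1})$ for each $\psi$, verify that $B^pA^3B^q$ is freely reduced, verify the absence of the four forbidden subwords, and finally deduce Dehn reducedness.

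First I would compute $(A, B)$ for each $\psi \in \Psi$. In every case both $A$ and $B$ turn out to be freely reduced words of length at most five, related by $B = \psi(a) \cdot A$. Representative examples are $\psi_{(1, 1)}$, giving $A = ab^{-1}$, $B = a^2b^{-1}$; $\psi_{(3, 1)}$, giving $A = ab^2$, $B = abab^2$; and $\psi_{(6, 1)}$, giving $A = aba$, $B = ababa$. Next, by comparing the last letter of the left factor with the first letter of the right factor, I would check that none of the four junctions $AA$, $AB$, $BA$, $BB$ admits a free reduction. Since $B^pA^3B^q$ is built entirely from these four junction types, this immediately gives the freely-reduced claim.

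For the forbidden-subword claim I would, case by case, bound $(i)$ the maximal monochromatic run of any signed generator (which controls $a^{\pm 4}$ and $b^{\pm 4}$) and $(ii)$ the maximal strictly-alternating $(ab)$-pattern (which controls the length-seven subwords $((ab)^3 a)^{\pm 1}$ and $(b(ab)^3)^{\pm 1}$), both within $A$ and $B$ individually and across each of the four junction types. For instance, in the $\psi_{(6, 1)}$ case every junction produces the subsequence $aa$, so monochromatic $a$-runs never exceed $2$ and strictly-alternating patterns are always interrupted by these $aa$-junctions, capping their length at the $5$ of a single $B$; neither bound is enough for any forbidden subword. To cut the bookkeeping, I would exploit the two involutions of $F(a, b)$ that preserve the forbidden set: the inversion $a \leftrightarrow a^{-1}, b \leftrightarrow b^{-1}$ (which essentially swaps $\epsilon = +1$ with $\epsilon = -1$) and the swap $a \leftrightarrow b$ (which interchanges $\psi_{(1, \epsilon)}$ with $\psi_{(4, \epsilon)}$ and $\psi_{(2, \epsilon)}$ with $\psi_{(3, \epsilon)}$, etc.), reducing the twelve cases to a handful of essentially distinct ones.

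Finally, the ``so is Dehn reduced'' conclusion is immediate: for $i, j, k \geq 6$, any subword of a cyclic shift of the relators $a^{\pm i}, b^{\pm j}, (ab)^{\pm k}$ of length greater than half that of the relator must contain one of the four forbidden subwords, so the absence of all four forces Dehn reducedness. The main obstacle here is purely organisational: each individual verification is a one-line calculation, but there are twelve maps and four junction types, and care is required to stay systematic and not overlook a subword at a junction between two copies of $A$ or $B$.
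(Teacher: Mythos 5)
Your proposal is correct and follows essentially the same route as the paper: a direct inspection of the words $B^pA^3B^q$ for the maps $\psi\in\Psi$, with the twelve cases cut down by the swap $a\leftrightarrow b$ (relating $\psi_{(l,\epsilon)}$ to $\psi_{(l+3,\epsilon)}$) and by inversion (relating $\epsilon=1$ to $\epsilon=-1$), followed by the observation that avoiding $a^{\pm4}$, $b^{\pm4}$ and the length-seven alternating words rules out any subword exceeding half a relator when $i,j,k\geq6$. The paper's proof is exactly this, reduced to the three explicit words for $\psi_{(1,1)}$, $\psi_{(2,1)}$, $\psi_{(3,1)}$, with your junction bookkeeping left implicit under ``by inspection''.
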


\begin{proof}
The proof is by inspection of the appropriate words. Indeed the following words are all freely reduced over $F(a, b)$ for $p, q\geq 3$ with the longest subwords of relators occurring being $a^3\leq a^i$, $b\leq b^j$ and $babab\leq (ab)^k$. Each word represents $B^pA^3B^{q}$ for the indicated $\psi\in\Psi$.
\begin{align*}
&\psi_{(1, 1)}:
&(a^2b^{-1})^{p}(ab^{-1})^3(a^2b^{-1})^q\\
&\psi_{(2, 1)}:
&(a^3b)^{p}(a^2b)^3(a^3b)^q\\
&\psi_{(3, 1)}:
&(abab^2)^{p}(ab^2)^3(abab^2)^q\\
\end{align*}
Now, if the result holds for $\psi_{(l, 1)}$, $l\in\{1, 2, 3\}$ then it also holds for $\psi_{(l+3, 1)}$ (as $\psi_{(l, 1)}$ and $\psi_{(l+3, 1)}$ differ just by a switch of $a$ and $b$, and possibly by an inner automorphism). Further, if the result holds for $\psi_{(l, 1)}$ then it also holds for $\psi_{(l, -1)}$. Hence, the proof is complete.
\end{proof}

\begin{lemma}
\label{lem:automorphicorbit2}
Let $T_{i, j, k}=\<a, b; a^i, b^j, (ab)^k\>$, with $i, j, k\geq6$. Suppose $\psi$ is a map contained in the set $\Psi$, and we shall write $A:=\psi(a)\psi(b)^{-1}$, $B:=\psi(a)^2\psi(b)^{-1}$. Then for all $p,q\geq 3$, $\epsilon_0=\pm1$, after free reduction over $F(a, b)$ the word $B^{\epsilon_0 p}A^{3}B^{-\epsilon_0 q}$
has the form $B^{\epsilon_0 (p-1)}CB^{-\epsilon_0 (q-1)}$ for some word $C\in F(a, b)$. Moreover, the resulting word $B^{\epsilon_0 (p-1)}CB^{-\epsilon_0 (q-1)}$ does not contain $a^{\pm4}$, $b^{\pm4}$, $(ab)^{\pm3}$ or $(ba)^{\pm3}$ and so is Dehn reduced.
\end{lemma}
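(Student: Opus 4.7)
The plan is to argue in parallel with Lemma \ref{lem:automorphicorbit1}, by direct computation for each $\psi \in \Psi$. As in the proof of the previous lemma, the twelve cases in $\Psi$ reduce to three representatives $\psi_{(1,1)}, \psi_{(2,1)}, \psi_{(3,1)}$: swapping $a \leftrightarrow b$ converts $\psi_{(l,1)}$ into $\psi_{(l+3,1)}$ (up to an inner automorphism), and the sign flip $\epsilon \mapsto -\epsilon$ in $\psi_{(l,\epsilon)}$ is handled together with the sign $\epsilon_0$ in the claim. Thus it suffices to analyse each representative $\psi_{(l,1)}$ for $\epsilon_0 = \pm 1$.

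For each representative, I would first identify the key short reduction $AB^{-\epsilon_0}$ at the right junction when $\epsilon_0=1$ (symmetrically, $B^{-\epsilon_0}A$ at the left junction when $\epsilon_0=-1$). In each case this reduction absorbs exactly one factor of $B^{\mp 1}$ into a single copy of $A$. For example, when $\psi=\psi_{(1,1)}$ and $\epsilon_0=1$ one has $A=ab^{-1}$, $B=a^2b^{-1}$, so $AB^{-1}=ab^{-1}\cdot ba^{-2}=a^{-1}$ and hence $A^3B^{-1}=ab^{-1}ab^{-1}a^{-1}$, with no further free reduction possible. Similarly for $\psi_{(2,1)}$ one finds $AB^{-1}=a^2b\cdot b^{-1}a^{-3}=a^{-1}$, and for $\psi_{(3,1)}$ one finds $AB^{-1}=ab^2\cdot b^{-2}a^{-1}b^{-1}a^{-1}=b^{-1}a^{-1}$. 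The second step is to verify that the resulting bounded middle factor $C:=BA^3B^{-1}$ (freely reduced) neither begins nor ends with a letter that cancels with the adjacent $B^{\epsilon_0}$ of the untouched prefix or $B^{-\epsilon_0}$ of the untouched suffix; this is immediate once $C$ is written out explicitly.

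The final and only substantive step is the Dehn-reducedness check on $B^{\epsilon_0(p-1)}CB^{-\epsilon_0(q-1)}$. The periodic prefix and suffix are subwords of the type handled in Lemma \ref{lem:automorphicorbit1}, so they separately contain none of $a^{\pm 4},b^{\pm 4},(ab)^{\pm 3},(ba)^{\pm 3}$. The middle $C$ is a fixed short word which may be inspected letter-by-letter. The only delicate case is a forbidden subword straddling a junction between $C$ and one of the periodic sides, and here the choice of reduction in the previous paragraph is engineered so that the junctions preserve the $a$- and $b$-patterns present in Lemma \ref{lem:automorphicorbit1}. For instance, in $\psi_{(1,1)}$, $\epsilon_0=1$ the full reduced word is
\[
(a^2b^{-1})^{p-1}\,a^2b^{-1}ab^{-1}ab^{-1}a^{-1}\,(ba^{-2})^{q-1},
\]
whose only positive $b$ appears as the first letter of $(ba^{-2})^{q-1}$ and is preceded by $a^{-1}$, ruling out $ababab$; analogous inspections preclude the negative patterns and the $(ba)^{\pm 3}$ patterns.

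The main obstacle will be carrying out the junction analysis in step three uniformly across the three representatives, since each of $\psi_{(1,1)},\psi_{(2,1)},\psi_{(3,1)}$ produces a different middle $C$ with a different endpoint letter pattern. However, in each case the check is finite and mechanical once the reduction pattern $AB^{-\epsilon_0}$ (respectively $B^{-\epsilon_0}A$) is identified, so the proof reduces to three short case-by-case inspections analogous to the table displayed in the proof of Lemma \ref{lem:automorphicorbit1}.
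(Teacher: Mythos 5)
Your proposal is correct and takes essentially the same route as the paper's proof: reduce the twelve maps in $\Psi$ to the three representatives $\psi_{(1,1)},\psi_{(2,1)},\psi_{(3,1)}$ via the $a\leftrightarrow b$ swap and the sign flip, compute the single junction cancellation ($AB^{-1}$, resp.\ $B^{-1}A$) explicitly, and inspect the resulting freely reduced words for the forbidden subwords, exactly as in the paper's displayed table. The only cosmetic slip is that for $\epsilon_0=-1$ the left-junction product is $B^{\epsilon_0}A=B^{-1}A$ rather than $B^{-\epsilon_0}A$.
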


\begin{proof}\predisplaypenalty=9900
As with Lemma \ref{lem:automorphicorbit1}, the proof is by inspection of the appropriate words. Indeed, after free reduction over $F(a, b)$ the following words are all Dehn reduced for $p,q\geq3$, $\epsilon_0=\pm1$, with the longest subwords of relators occurring being $a^{\pm3}\leq a^{\pm i}$, $b^{\pm2}\leq b^{\pm j}$ and $(babab)^{\pm1}\leq (ab)^{\pm k}$. In each line the left-hand word represents $B^{\epsilon_0p}A^{3}B^{-\epsilon_0q}$ for the indicated map $\psi\in\Psi$ and the right-hand word represents the freely reduced form of the word. Each map has two cases: $\epsilon_0=1$ and $\epsilon_0=-1$.
\begin{align*}
&\psi_{(1, 1)}:
&(a^2b^{-1})^{p}(ab^{-1})^3(a^2b^{-1})^{-q}&=(a^2b^{-1})^{p}(ab^{-1})^2a^{-1}(ba^{-2})^{q-1}\\
&
&(a^2b^{-1})^{-p}(ab^{-1})^3(a^2b^{-1})^{q}&=(ba^{-2})^{p-1}ba^{-1}b^{-1}(ab^{-1})^2(a^2b^{-1})^{q}\\
&\psi_{(2, 1)}:
&(a^3b)^{p}(a^2b)^3(a^3b)^{-q}&=(a^3b)^{p}(a^2b)^2a^{-1}(b^{-1}a^{-3})^{q-1}\\
&
&(a^3b)^{-p}(a^2b)^3(a^3b)^{q}&=(b^{-1}a^{-3})^{p-1}b^{-1}a^{-1}b(a^2b)^2(a^3b)^{q}\\
&\psi_{(3, 1)}:
&(abab^2)^{p}(ab^2)^3(abab^2)^{-q}&=(abab^2)^{p}ab^2aba^{-1}(b^{-2}a^{-1}b^{-1}a^{-1})^{q-1}\\
&
&(abab^2)^{-p}(ab^2)^3(abab^2)^{q}&=(b^{-2}a^{-1}b^{-1}a^{-1})^{p-1}b^{-2}a^{-1}b(ab^2)^2(abab^2)^{q}
\end{align*}
Now, if the result holds for $\psi_{(l, 1)}$, $l\in\{1, 2, 3\}$ then it also holds for $\psi_{(l+3, 1)}$ (as $\psi_{(l, 1)}$ and $\psi_{(l+3, 1)}$ differ just by a switch of $a$ and $b$, and possibly by an inner automorphism). Further, if the result holds for $\psi_{(l, 1)}$ then it also holds for $\psi_{(l, -1)}$. Hence, the proof is complete.
\end{proof}

We now combine Lemmas \ref{lem:automorphicorbit1} and \ref{lem:automorphicorbit2} as follows.
Note that the word $W_{\{a, b\}}(\psi(\mathbf{s}))$ corresponds to the image of the word $W_{\mathbf{s}}(\mathbf{s})$ under the map $\psi$ after applying free reduction in $F(a, b)$.

\begin{lemma}
\label{lem:automorphicorbit3}
Let $T_{i, j, k}=\<a, b; a^i, b^j, (ab)^k\>$, with $i, j, k\geq6$, and let $\mathbf{s}:=\{x, y\}$. If $\psi$ is a map contained in the set $\Psi$ and $W_{\mathbf{s}}(\mathbf{s})$ is a word over $\mathbf{s}$ which is freely reduced over $\mathbf{s}$ then the word $W_{\{a, b\}}(\psi(\mathbf{s}))$ is Dehn reduced.
\end{lemma}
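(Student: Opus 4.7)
The plan is to show that the freely-reduced form of $W:=W_{\{a,b\}}(\psi(\mathbf{s}))$ has a rigid alternating structure in the ``$A,B$-alphabet''---blocks $A^{\pm 3}$ separated by blocks $B^{m_i}$ with every $|m_i|\geq 3$---and then to deduce Dehn reducedness by covering $W$ with overlapping three-block windows $B^{\mu_{i-1}}A^{3\eta_i}B^{\mu_i}$, each handled by Lemma \ref{lem:automorphicorbit1} or Lemma \ref{lem:automorphicorbit2}.

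First I will establish the block structure. Write $W_{\mathbf{s}}(\mathbf{s})=x_1^{\epsilon_1}\cdots x_n^{\epsilon_n}$ in $\mathbf{s}$-reduced form. Each syllable $x_j^{\epsilon_j}$ rewrites in the $A,B$-alphabet as an alternating concatenation of $A^{3\epsilon_j}$-blocks and $B^{\epsilon_j k}$-blocks, with $k\in\{3,\ldots,\rho+2\}$ when $x_j=x$ and $k\in\{\rho+3,\ldots,2\rho+2\}$ when $x_j=y$. At each junction $x_j^{\epsilon_j}x_{j+1}^{\epsilon_{j+1}}$ I examine the four sign patterns. If $\epsilon_j=\epsilon_{j+1}$ no cancellation crosses the junction. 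If $(\epsilon_j,\epsilon_{j+1})=(1,-1)$ then $\mathbf{s}$-reducedness forces $x_j\neq x_{j+1}$, and the trailing $B^k$ of $x_j$ combines with the leading $B^{-\ell}$ of $x_{j+1}^{-1}$ into a single block $B^{k-\ell}=B^{\pm\rho}$. If $(\epsilon_j,\epsilon_{j+1})=(-1,1)$ then $x_j\neq x_{j+1}$, the adjacent $A^{-3}$ and $A^3$ cancel entirely, and the neighbouring $B$-blocks merge into $B^{\pm\rho}$. The essential point is that the smallest $B$-exponent in $y$ (namely $\rho+3$) strictly exceeds the largest in $x$ (namely $\rho+2$): this gap of size $1$ forces the partial cancellations to leave a residue of exactly $\pm\rho$, prevents cascading cancellations across further blocks, and guarantees that the $A$-blocks flanking any merged $B$-block have opposite signs. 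Since $\rho\geq 3$, every $B$-block of $W$ has exponent of absolute value at least $3$.

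With the structure in hand, I cover $W$ by its overlapping three-block windows $B^{\mu_{i-1}}A^{3\eta_i}B^{\mu_i}$, one per $A$-block. Inverting if $\eta_i=-1$, each window is either of the form $B^{\epsilon p}A^3 B^{\epsilon q}$ (same sign, Lemma \ref{lem:automorphicorbit1}) or $B^{\epsilon_0 p}A^3 B^{-\epsilon_0 q}$ (opposite sign, Lemma \ref{lem:automorphicorbit2}) with $p,q\geq 3$, so its freely-reduced form contains no $a^{\pm 4}$, no $b^{\pm 4}$, and no alternating subword of length $\geq 6$. Since each $A^{\pm 3}$-block has $F(a,b)$-length at least $6$ and each $B^{\pm n}$-block has length at least $9$, every reduced window has length well in excess of $7$, and any length-$\leq 7$ subword of $W$ lies entirely within some window (in particular, no such subword can straddle two $A$-blocks, because each separating $B$-block by itself is longer than $7$). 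As the defining relators $a^i$, $b^j$, $(ab)^k$ of $T_{i,j,k}$ with $i,j,k\geq 6$ have a minimal ``forbidden'' subword---one whose length exceeds half the relator length---of length at most $7$ (namely $a^{\pm 4}$, $b^{\pm 4}$, or $((ab)^3 a)^{\pm 1}$, $(b(ab)^3)^{\pm 1}$), the window-level avoidance given by the two lemmas propagates to $W$, and $W$ is Dehn reduced. The main obstacle is the junction case analysis of the second paragraph: the exponents in the definitions of $x$ and $y$ are carefully engineered so that when two syllables meet with opposing signs the partial cancellations yield an exponent of magnitude exactly $\rho\geq 3$, and stop before adjacent $A$-blocks of equal sign can meet and annihilate.
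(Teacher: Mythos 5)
Your proof is correct and follows essentially the same route as the paper's: both establish that the freely reduced image decomposes into $B^{\cdot}A^{\pm3}B^{\cdot}$ pieces whose junction exponents are $\pm\rho$ with opposite-signed flanking $A$-blocks, apply Lemmas \ref{lem:automorphicorbit1} and \ref{lem:automorphicorbit2} to each piece, and propagate Dehn-reducedness globally because the blocks are much longer than any forbidden relator subword. The paper organises the bookkeeping as length-two subwords over the derived alphabet $\mathbf{s}_0$ of $X,Y,Z$ syllables rather than your overlapping three-block windows, but this is only a cosmetic difference.
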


\begin{proof}
Write $A:=\psi(a)\psi(b)^{-1}$ and $B:=\psi(a)^2\psi(b)^{-1}$. Write $X_{p_0, q_0}:=B^{p_0}A^3B^{q_0}$, $Y_{p_1, q_1}:=B^{p_1}A^3B^{-q_1}$ and $Z_{p_2, q_2}:=B^{-p_2}A^3B^{q_2}$, where $p_0, q_0\geq0$ and where $p_1, p_2, q_1, q_2\geq3$, and write
\[
\mathbf{s}_0:=\{X_{p_0, q_0}\mid p_0, q_0\geq0\}\cup \{Y_{p_1, q_1}, Z_{p_2, q_2}\mid p_i, q_i\geq3\}.
\]
Now, $W_{\psi(\mathbf{s})}(\psi(\mathbf{s}))$ is a product of the words $(A^3B^3A^3B^4\cdots A^3B^{\rho+2})^{\pm1}$ and $(A^3B^{\rho+3}A^3\cdots A^3B^{2\rho+2})^{\pm1}$. Therefore, there exists a word $U_{\mathbf{s}_0}(\mathbf{s}_0)$ such that $U_{\mathbf{s}_0}(\mathbf{s}_0)\equiv W_{\psi(\mathbf{s})}(\psi(\mathbf{s}))$,
and where every subword of length two of $U_{\mathbf{s}_0}(\mathbf{s}_0)$ has the form either $(X_{p_0, q_0}Y_{p_1, q_1})^{\pm1}$, $(Y_{p_1, q_1}X_{p_0, q_0}^{-1})^{\pm1}$, $(X_{p_0, q_0}^{-1}Z_{p_2, q_2})^{\pm1}$, $(Z_{p_2, q_2}X_{p_0, q_0})^{\pm1}$, or $(X_{p_0, q_0}X_{p_0^{\prime}, q_0^{\prime}})^{\pm1}$. Note that $U_{\{a, b\}}(\mathbf{s}_0)\equiv W_{\{a, b\}}(\psi(\mathbf{s}))$.

By Lemma \ref{lem:automorphicorbit1}, the words $X_{p_0, q_0}$ are freely reduced over $F(a, b)$ and are Dehn reduced.
By Lemma \ref{lem:automorphicorbit2}, after free reduction in $F(a, b)$ all words $Y_{p_1, q_1}$ and $Z_{p_2, q_2}$ are Dehn reduced and of the form respectively $B^{p_1-1}C_1B^{-(q_1-1)}$ and $B^{-(p_2-1)}C_2B^{q_2-1}$ for some fixed words $C_1, C_2\in F(a, b)$.
Then, applying Lemma \ref{lem:automorphicorbit1} again, the following words are freely reduced over $F(a, b)$ and Dehn reduced:
\[
(X_{p_0, q_0}B^{p_1-1}C_1B^{-(q_1-1)})^{\pm1}, (B^{p_1-1}C_1B^{-(q_1-1)}X_{p_0, q_0}^{-1})^{\pm1},
\]
\[
(X_{p_0, q_0}^{-1}B^{-(p_2-1)}C_2B^{q_2-1})^{\pm1}, (B^{-(p_2-1)}C_2B^{q_2-1}X_{p_0, q_0})^{\pm1}, (X_{p_0, q_0}X_{p_0^{\prime}, q_0^{\prime}})^{\pm1}
\]
These words represent the length-two subwords of $U_{\mathbf{s}_0}(\mathbf{s}_0)$ after free reduction in $F(a, b)$, and hence the word $U_{\{a, b\}}(\mathbf{s}_0)$ is Dehn reduced. As $U_{\{a, b\}}(\mathbf{s}_0)\equiv W_{\{a, b\}}(\psi(\mathbf{s}))$, the words $W_{\{a, b\}}(\psi(\mathbf{s}))$ is Dehn reduced as required.
\end{proof}


\p{Examples of malcharacteristic subgroups}
We are finally ready to prove that the subgroup $M$ of $T_{i,j,k}$, $i, j, k\geq6$, is malcharacteristic. The words $x$ and $y$ in Lemma \ref{lem:malcharTriange} are the same words defined at the end of Section \ref{sec:Malchar} and at the start of Section \ref{sec:MalcharTriangle}.

\begin{lemma}
\label{lem:malcharTriange}
Let $M$ be the subgroup of $T_{i, j, k}=\langle a, b; a^i, b^j, (ab)^k\rangle$, $i, j, k\geq6$, which is generated by the following elements, with $\rho\gg i, j ,k$.
\begin{align*}
x&:=(ab^{-1})^{3}(a^2b^{-1})^{3}(ab^{-1})^3(a^2b^{-1})^4\ldots (ab^{-1})^3(a^2b^{-1})^{\rho+2}\\
y&:=(ab^{-1})^3(a^2b^{-1})^{\rho+3}(ab^{-1})^3(a^2b^{-1})^{\rho+4}\ldots (ab^{-1})^{3}(a^2b^{-1})^{2\rho+2}
\end{align*}
Then $M$ is a malcharacteristic subgroup of $T_{i, j, k}$ and is free of rank two.
\end{lemma}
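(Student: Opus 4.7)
The plan is to combine the malnormality of $M$ (already established in Lemma \ref{lem:malnormal}) with an application of Theorem \ref{thm:FreeMalnormMetric4T4}, which will let me reduce questions about automorphisms of $T_{i,j,k}$ meeting $M$ to analogous questions about automorphisms of $F(a,b)$ meeting $\widetilde{M}=\langle x,y\rangle_{F(a,b)}$. The malcharacteristic property of $\widetilde M$ in $F(a,b)$, which was proved in Lemma \ref{lem:malcharFree} (and noted to apply to the specific subgroup $\widetilde M$), then yields the desired conclusion.

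First I would invoke Lemma \ref{lem:malnormal} to obtain that $M$ is free of rank two and malnormal in $T_{i,j,k}$. By characterisation (\ref{list:thirdCharacterisation}) of Lemma \ref{lem:MalcharCharacterisation}, it remains to show that any $\delta\in\aut(T_{i,j,k})$ with $\delta(M)\cap M\neq 1$ is inner. Using Lemma \ref{lem:transversal}, I write $\delta=\gamma\psi$ with $\gamma\in\inn(T_{i,j,k})$ and $\psi$ in the appropriate transversal; since $\psi_{(1,1)}$ is the identity, I may assume $\psi\neq\psi_{(1,1)}$ and aim for a contradiction. The hypothesis $\gamma\psi(M)\cap M\neq 1$ is equivalent to the existence of $g\in T_{i,j,k}$ with $\langle\psi(x),\psi(y)\rangle_{T_{i,j,k}}\cap M^g\neq 1$.

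Next I apply Theorem \ref{thm:FreeMalnormMetric4T4} with $\mathbf{x}=\{a,b\}$, $\mathbf{r}=\{a^i,b^j,(ab)^k\}$, $\mathbf{s}=\{x,y\}$, and $\mathbf{t}=\{\psi(x),\psi(y)\}$. The $C^\prime(1/4)-T(4)$ hypothesis on $\langle a,b;\mathbf{r},\mathbf{s}\rangle$ is precisely the small cancellation check used in Lemma \ref{lem:malnormal}, and the hypothesis on $\mathbf{t}$ is supplied by Lemma \ref{lem:automorphicorbit3} (with a short supplementary verification that the Dehn-reducedness it provides persists under cyclic shifts, since a cyclically freely reduced word over $\mathbf{t}$ has no free cancellation at the seam in its $F(a,b)$-image by the explicit form of $\psi(x)$ and $\psi(y)$). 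The theorem then yields some $W\in F(a,b)$ with $\psi(\widetilde{M})\cap \widetilde{M}^W\neq 1$. Rewriting, this says $(\gamma_{W^{-1}}\psi)(\widetilde{M})\cap \widetilde{M}\neq 1$ in $F(a,b)$.

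Since $\widetilde M$ is malcharacteristic in $F(a,b)$ by Lemma \ref{lem:malcharFree} (and the remark immediately following it), I conclude that $\gamma_{W^{-1}}\psi\in\inn(F(a,b))$, and hence $\psi\in\inn(F(a,b))$. I then derive the contradiction by checking that every non-identity element of $\Psi$ is non-inner as an automorphism of $F(a,b)$: each such $\psi$ acts non-trivially on the abelianisation $F(a,b)^{\operatorname{ab}}\cong\mathbb{Z}^2$ (either by inverting a generator, swapping $a$ and $b$, or sending one of $a,b$ to a conjugate of $ab$), whereas inner automorphisms of a free group act trivially on the abelianisation. Thus $\psi=\psi_{(1,1)}$, contradicting our assumption, and the proof is complete.

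The main obstacle I anticipate is the seemingly innocuous gap between the ``Dehn reduced'' conclusion of Lemma \ref{lem:automorphicorbit3} and the ``cyclically $\mathbf{r}^{\ast}$-reduced'' hypothesis of Theorem \ref{thm:FreeMalnormMetric4T4}; closing this gap requires verifying that the structural decomposition of $W_{\{a,b\}}(\psi(\mathbf{s}))$ exploited in Lemma \ref{lem:automorphicorbit3} is invariant under cyclic shifts over $\mathbf{t}$, and that no new free cancellation in $F(a,b)$ is introduced by such a shift. Everything else is a matter of careful bookkeeping between the small cancellation conjugation theorem and the malcharacteristic property of $\widetilde M$.
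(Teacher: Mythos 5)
Your proposal is correct and follows essentially the same route as the paper's own proof: malnormality and freeness from Lemma \ref{lem:malnormal}, the transversal $\Psi$ from Lemma \ref{lem:transversal}, an application of Theorem \ref{thm:FreeMalnormMetric4T4} with $\mathbf{s}=\{x,y\}$ and $\mathbf{t}=\psi(\mathbf{s})$ justified by Lemma \ref{lem:automorphicorbit3}, and finally the malcharacteristic property of $\widetilde{M}$ in $F(a,b)$ from Lemma \ref{lem:malcharFree}. The one gap you flag --- upgrading ``Dehn reduced'' to ``cyclically Dehn reduced'' --- is closed in the paper by the observation that every cyclic shift of a word $U\in\psi(M)$ is a subword of $U^2\in\psi(M)$, which is itself Dehn reduced; your abelianisation check that non-identity elements of $\Psi$ are non-inner is a correct filling-in of a detail the paper leaves implicit.
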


\begin{proof}
Write $\widetilde{M}:=\langle x, y\rangle$ for the subgroup of $F(a, b)$ generated by the words $x$ and $y$. Then $\widetilde{M}$ is in the same $\aut(F(a, b))$-orbit as the subgroup $L$ defined in Lemma \ref{lem:malcharFree}. Therefore, by Lemma \ref{lem:malcharFree}, the subgroup $\widetilde{M}$ is malcharacteristic in $F(a, b)$. We use this to prove that $M$ is malcharacteristic in $T_{i, j, k}$.

Now, by Lemma \ref{lem:malnormal}, it is sufficient to prove that if $\delta\in\aut(T_{i, j, k})$ is such that $\delta(M)\cap M\neq 1$ then $\delta\in\inn(T_{i, j, k})$. So, suppose that there exists an automorphism $\delta\in\aut(T_{i, j, k})$ and two words $U$ and $V$ such that $U(x, y)=V(\delta(x), \delta(y))$, and we wish to prove that $\delta$ is inner. By Lemma \ref{lem:transversal} there exist $\psi\in\Psi$ and $\gamma_g\in\inn(T_{i, j, k})$, $g\in T_{i, j, k}$, such that $\delta=\psi\gamma_g$, so $\psi\gamma_g(M)\cap M\neq1$. Now, after free reduction over $F(a, b)$ every word in $\psi(M)$ is Dehn reduced, by Lemma \ref{lem:automorphicorbit3}.
It follows that every word in $\psi(M)$ is cyclically Dehn reduced (as every cyclic shift of a word $U\in\psi(M)$ is a subword of $U^2$, and $U^2\in\psi(M)$ is Dehn reduced).
Hence, we may apply Theorem \ref{thm:FreeMalnormMetric4T4}. Therefore, there exists a word $W\in F(a, b)$ such that $\widetilde{\psi}\gamma_W\left(\widetilde{M}\right)\cap \widetilde{M}\neq_{F(a, b)}1$, where $\widetilde{\psi}$ is the automorphism of $F(a, b)$ defined using the same words as $\psi$. Now $\widetilde{\psi}\not\in\inn(F(a, b))$ so $\widetilde{M}$ is not malcharacteristic in $F(a, b)$, a contradiction.
%
\end{proof}

If any of $i$, $j$ or $k$ in $T_{i, j, k}$ are less than $6$ then it is not clear that the subgroup $M$ is malcharacteristic. The precise issues are found in Lemma \ref{lem:ExtendingZieschang} and in Lemmas \ref{lem:automorphicorbit1} and \ref{lem:automorphicorbit2}, whose proofs each require $i, j, k\geq6$. Because of the small cancellation arguments we apply, it is likely that Lemma \ref{lem:malcharTriange}, and hence Theorem \ref{thm:intro1} and Theorem \ref{thm:GeneralTriangle1}, can be extended to $i, j, k>6$ or even to $i, j, k>4$. To develop our results in this direction, first Lemma \ref{lem:ExtendingZieschang} would need to be extended appropriately. To extend Lemmas \ref{lem:automorphicorbit1} and \ref{lem:automorphicorbit2} it is likely that new words $x, y\in F(a, b)$ would need to be found.


Lemmas \ref{lem:malcharArbRank} and \ref{lem:malcharTriange} combine to prove the following result.

\begin{proposition}
\label{lem:malcharTriangleArbRank}
A triangle group $T_{i, j, k}$, $i, j, k\geq6$, contains malcharacteristic subgroups $M_n$ of arbitrary rank $n$ (possibly countably infinite).
\end{proposition}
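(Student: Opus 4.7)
The plan is essentially to combine the two lemmas cited in the sentence immediately preceding the proposition, as the author explicitly indicates. The substantive work has already been done in Lemma \ref{lem:malcharTriange}, which exhibits a concrete malcharacteristic subgroup $M = \langle x, y\rangle$ of $T_{i,j,k}$ that is free of rank two; and Lemma \ref{lem:malcharArbRank}, which is a general result saying that once a group $H$ admits a single malcharacteristic subgroup that is free of rank $m > 1$, it automatically admits malcharacteristic subgroups free of every rank $n \in \mathbb{N}\cup\{\infty\}$.

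First I would apply Lemma \ref{lem:malcharTriange} to the given triangle group $T_{i,j,k}$ with $i,j,k \geq 6$ to produce the seed subgroup $M = \langle x,y\rangle$, which is simultaneously malcharacteristic in $T_{i,j,k}$ and free of rank two. Then I would feed this $M$ into Lemma \ref{lem:malcharArbRank} with $H := T_{i,j,k}$ and $m := 2$; the conclusion of that lemma directly yields, for each $n \in \mathbb{N}\cup\{\infty\}$, a malcharacteristic subgroup $M_n$ of $T_{i,j,k}$ which is free of rank $n$. This is exactly the statement of Proposition \ref{lem:malcharTriangleArbRank}.

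There is no real obstacle here: the proof is a one-line composition of two previously established results, and the hypotheses of Lemma \ref{lem:malcharArbRank} are matched verbatim by the conclusion of Lemma \ref{lem:malcharTriange}. The only thing worth remarking on, for the reader who wants to trace the construction explicitly, is that the malcharacteristic subgroups $M_n$ of $T_{i,j,k}$ produced this way are the subgroups generated by any $C'(1/6)$ small cancellation subset of cardinality $n$ (containing no proper powers) of the free group $\langle x, y\rangle$, as exhibited inside the proof of Lemma \ref{lem:malcharArbRank}. So the proof simply reads as follows.

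\begin{proof}
By Lemma \ref{lem:malcharTriange}, the subgroup $M = \langle x,y\rangle$ of $T_{i,j,k}$ is malcharacteristic in $T_{i,j,k}$ and is free of rank two. Applying Lemma \ref{lem:malcharArbRank} with $H := T_{i,j,k}$ and $m := 2$ yields, for every $n \in \mathbb{N}\cup\{\infty\}$, a malcharacteristic subgroup $M_n \leq T_{i,j,k}$ which is free of rank $n$.
\end{proof}
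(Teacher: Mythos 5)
Your proof is correct and is exactly the paper's argument: the paper also obtains Proposition \ref{lem:malcharTriangleArbRank} by combining Lemma \ref{lem:malcharTriange} (the seed subgroup $M=\langle x,y\rangle$ is malcharacteristic and free of rank two) with Lemma \ref{lem:malcharArbRank}, and likewise notes that the $M_n$ can be taken to be generated by $C'(1/6)$ subsets of $\langle x,y\rangle$.
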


Indeed, the subgroup $M_n$ is generated by $n$ finite subwords satisfying $C^{\prime}(1/6)$ of the following infinite word:
\[
{x}{y}({x}{y}^2){x}{y}({x}{y}^2)^2{x}{y}({x}{y}^2)^3\cdots.
\]

\p{Proofs of Theorems \ref{thm:intro1}, \ref{thm:intro2}, \ref{thm:intro3} and \ref{thm:intro4}}
We first prove Theorem \ref{thm:intro1}.

\begin{proof}[Proof of Theorem \ref{thm:intro1}]
By Lemma \ref{lem:malcharTriange}, the group $T_i$ contains a malcharacteristic subgroup which is free of rank two. As $T_i$ has Serre's property FA \cite[Example 6.3.5]{trees} and as $\phi: a\mapsto b$, $b\mapsto b^{-1}a^{-1}$ defines a non-inner automorphism of $T_i$ which has order three, the result follows from Theorem \ref{thm:mainconstruction}.
\end{proof}

We now prove Theorem \ref{thm:intro2}.

\begin{proof}[Proof of Theorem \ref{thm:intro2}]
Theorem \ref{thm:intro2} follows immediately from Theorem \ref{thm:FunctorialProperties}.
\end{proof}

We now prove Theorem \ref{thm:intro3}.

\begin{proof}[Proof of Theorem \ref{thm:intro3}]
Theorem \ref{thm:intro3} follows immediately from Theorem \ref{thm:ResidualProperties}.
\end{proof}

We now prove Theorem \ref{thm:intro4}.

\begin{proof}[Proof of Theorem \ref{thm:intro4}]
Theorem \ref{thm:intro4} follows immediately from Theorem \ref{thm:FreeProduct}.
\end{proof}

\p{The triangle groups \boldmath{$T_{i, j, k}$}}
Theorems \ref{thm:GeneralTriangle1}--\ref{thm:GeneralTriangle4} prove analogous results to Theorems \ref{thm:intro1}, \ref{thm:intro2}, \ref{thm:intro3} and \ref{thm:intro4} where the base group $H$ is a fixed triangle group $T_{i, j, k}$ rather than an equilateral triangle group $T_i$.

\begin{theorem}
\label{thm:GeneralTriangle1}
Fix a (non-equilateral) triangle group $T_{i, j, k}$ with $i, j, k\geq6$.
Every countable group $Q$ can be realised as an index-one or-two subgroup of the outer automorphism group of an automorphism-induced HNN-extension $T_{\mathcal{P}}^{i, j, k}$ of $T_{i, j, k}$. Moreover, $\aut(T_{\mathcal{P}}^{i, j, k})$ has an index-one or-two subgroup which splits as $T_{\mathcal{P}}^{i, j, k}\rtimes Q$.
\end{theorem}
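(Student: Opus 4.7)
The plan is to derive Theorem \ref{thm:GeneralTriangle1} as a direct application of Theorem \ref{thm:mainconstruction} to the group $H := T_{i,j,k}$, exactly as Theorem \ref{thm:intro1} was derived for the equilateral case. To do this, I need to verify Conditions (\ref{item:mainconstruction1}), (\ref{item:mainconstruction2}) and (\ref{item:mainconstruction3}) of Theorem \ref{thm:mainconstruction} for $T_{i,j,k}$, and observe that Condition (\ref{item:mainconstruction4}) will generally fail in the non-equilateral case, which is precisely why the conclusion must be weakened from ``$\out(T_{\mathcal{P}}^{i,j,k})\cong Q$'' to ``$Q$ is an index-one or-two subgroup of $\out(T_{\mathcal{P}}^{i,j,k})$''.

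First I would note that every triangle group has Serre's property FA (see \cite[Example 6.3.5]{trees}), so Condition (\ref{item:mainconstruction1}) holds. Condition (\ref{item:mainconstruction2}) is immediate from Lemma \ref{lem:transversal}: for $i,j,k\geq 6$ non-equilateral the transversal $\Psi_{i,j,k}$ always contains at least $\psi_{(1,-1)}$ (or $\psi_{(4,1)}$ in the case $i=j\neq k$), which defines a non-inner automorphism of $T_{i,j,k}$, and hence $\out(T_{i,j,k})$ is non-trivial (indeed of order two or four). Condition (\ref{item:mainconstruction3}) is supplied by Lemma \ref{lem:malcharTriange}, which shows that the subgroup $M=\langle x, y\rangle$ of Section \ref{sec:Malchar} is malcharacteristic in $T_{i,j,k}$ and free of rank two for all $i,j,k\geq 6$.

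With these three conditions in hand, applying the construction of Theorem \ref{thm:mainconstruction} to $H = T_{i,j,k}$, together with any choice of non-inner automorphism $\phi\in\aut(T_{i,j,k})$ (for example $\phi = \psi_{(1,-1)}$), produces for each countable group presentation $\mathcal{P}$ an automorphism-induced HNN-extension $T_{\mathcal{P}}^{i,j,k} := T_{i,j,k}\ast_{(K_{\mathcal{P}},\phi)}$ and an index-one or-two subgroup $\out^0(T_{\mathcal{P}}^{i,j,k})\cong \pi_1(\mathcal{P}) = Q$ of $\out(T_{\mathcal{P}}^{i,j,k})$, with $\aut^0(T_{\mathcal{P}}^{i,j,k}) = \inn(T_{\mathcal{P}}^{i,j,k})\rtimes\out^0(T_{\mathcal{P}}^{i,j,k}) \cong T_{\mathcal{P}}^{i,j,k}\rtimes Q$. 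This gives precisely the conclusion of Theorem \ref{thm:GeneralTriangle1}.

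There is no real obstacle here, since the heavy lifting (establishing that $M$ is malcharacteristic and free of rank two, proving Theorem \ref{thm:mainconstruction}, and classifying $\aut(T_{i,j,k})$) is done earlier in the paper; the only subtlety to mention is why the ``moreover'' clause cannot be strengthened to an equality. By Lemma \ref{lem:transversal}, when $T_{i,j,k}$ is not equilateral every element of $\Psi_{i,j,k}\setminus\{\psi_{(1,1)}\}$ has order two, so $\out(T_{i,j,k})$ has exponent two and no non-inner $\phi$ satisfies Condition (\ref{item:mainconstruction4}) of Theorem \ref{thm:mainconstruction}. Accordingly the last clause of Theorem \ref{thm:mainconstruction} cannot be invoked, and we only conclude that $\out^0(T_{\mathcal{P}}^{i,j,k})$ is an index-one or-two subgroup of $\out(T_{\mathcal{P}}^{i,j,k})$, matching the statement of the theorem.
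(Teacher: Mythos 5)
Your proposal is correct and follows essentially the same route as the paper: the paper's proof likewise cites Lemma \ref{lem:malcharTriange} for the rank-two malcharacteristic subgroup, Serre's property FA for triangle groups, and the non-inner automorphism $a\mapsto a^{-1}$, $b\mapsto b^{-1}$ (your $\psi_{(1,-1)}$), then invokes Theorem \ref{thm:mainconstruction}. Your additional remark explaining why Condition (\ref{item:mainconstruction4}) fails in the non-equilateral case matches the paper's own discussion following Theorem \ref{thm:mainconstruction}.
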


\begin{proof}
By Lemma \ref{lem:malcharTriange}, the group $T_{i, j, k}$ contains a malcharacteristic subgroup which is free of rank two. As $T_{i, j, k}$ has Serre's property FA \cite[Example 6.3.5]{trees} and as $\phi: a\mapsto a^{-1}$, $b\mapsto b^{-1}$ defines a non-inner automorphism of $T_{i, j, k}$, the result follows from Theorem \ref{thm:mainconstruction}.
\end{proof}

The proofs of Theorems \ref{thm:GeneralTriangle2}--\ref{thm:GeneralTriangle4} are identical to the proofs of Theorems \ref{thm:intro2}, \ref{thm:intro3} and \ref{thm:intro4}, and hence are omitted.

\begin{theorem}
\label{thm:GeneralTriangle2}
The map defined by $\mathcal{P}\mapsto T^{i, j, k}_{\mathcal{P}}$ is a functor from the category of countable group presentations $\operatorname{Pres}$ to the category of groups $\operatorname{Grp}$.
\end{theorem}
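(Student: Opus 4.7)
The plan is to reduce this statement directly to Theorem \ref{thm:FunctorialProperties} of the paper, exactly as the analogous Theorem \ref{thm:intro2} was reduced in the proof given above. The essential point is that the construction $\mathcal{P}\mapsto T^{i, j, k}_{\mathcal{P}}$ described in Theorem \ref{thm:GeneralTriangle1} is a particular instance of the general construction $\mathcal{P}\mapsto H_{\mathcal{P}}$ of Theorem \ref{thm:mainconstruction}, specialised to $H=T_{i, j, k}$. The proof of Theorem \ref{thm:GeneralTriangle1} has already verified that $T_{i, j, k}$ with $i, j, k\geq6$ satisfies hypotheses (\ref{item:mainconstruction1})--(\ref{item:mainconstruction3}) of Theorem \ref{thm:mainconstruction}: Serre's property FA (by \cite[Example 6.3.5]{trees}), non-triviality of $\out(T_{i, j, k})$ (via Lemma \ref{lem:transversal}), and the existence of a malcharacteristic subgroup free of rank two (Lemma \ref{lem:malcharTriange}).

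Given these hypotheses, the functoriality statement is immediate: for a morphism $\mathcal{P}_1\to\mathcal{P}_2$ in $\operatorname{Pres}$, by definition $\mathcal{P}_2$ is a quotient presentation of $\mathcal{P}_1$, and so Theorem \ref{thm:mainconstruction}.\ref{point:mainconstruction3} supplies a canonical surjection $T^{i, j, k}_{\mathcal{P}_1}\twoheadrightarrow T^{i, j, k}_{\mathcal{P}_2}$. This gives the assignment on morphisms. Preservation of identities is trivial, and preservation of composition follows from the fact that the surjections produced in Theorem \ref{thm:mainconstruction}.\ref{point:mainconstruction3} are obtained by adjoining relators of the form $tUt^{-1}\phi(U)^{-1}$ corresponding to elements of the larger normal closure, and the union of two such sets of additional relators coincides with the one corresponding to the composed quotient relation.

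There is no real obstacle here: the entire proof of Theorem \ref{thm:FunctorialProperties} depends only on the abstract output of Theorem \ref{thm:mainconstruction} and never on specific properties of the base group beyond its satisfying hypotheses (\ref{item:mainconstruction1})--(\ref{item:mainconstruction3}). Since those hypotheses have been checked for $T_{i, j, k}$ in the proof of Theorem \ref{thm:GeneralTriangle1}, the proof of Theorem \ref{thm:GeneralTriangle2} amounts to a single invocation:

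\begin{proof}[Proof sketch of Theorem \ref{thm:GeneralTriangle2}]
The hypotheses (\ref{item:mainconstruction1})--(\ref{item:mainconstruction3}) of Theorem \ref{thm:mainconstruction} hold for $H=T_{i, j, k}$ by the proof of Theorem \ref{thm:GeneralTriangle1}. The construction $\mathcal{P}\mapsto T^{i, j, k}_{\mathcal{P}}$ is therefore an instance of $\mathcal{P}\mapsto H_{\mathcal{P}}$, and the conclusion follows verbatim from Theorem \ref{thm:FunctorialProperties}.
\end{proof}
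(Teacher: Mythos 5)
Your proposal matches the paper's argument exactly: the paper states that the proof of Theorem \ref{thm:GeneralTriangle2} is identical to that of Theorem \ref{thm:intro2}, which in turn follows immediately from Theorem \ref{thm:FunctorialProperties} once the hypotheses of Theorem \ref{thm:mainconstruction} are verified for $T_{i,j,k}$ as in the proof of Theorem \ref{thm:GeneralTriangle1}. Your reduction is correct and takes essentially the same route.
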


\begin{theorem}
\label{thm:GeneralTriangle3}
Let $\mathbb{T}^{i, j, k}_{\operatorname{Fin}}$ be the class of groups $T^{i, j, k}_{\mathcal{P}}$ where $\pi_1(\mathcal{P})$ is finite. If $\pi_1(\mathcal{P})$ is residually finite then $T^{i, j, k}_{\mathcal{P}}$ is residually-$\mathbb{T}^{i, j, k}_{\operatorname{Fin}}$.
\end{theorem}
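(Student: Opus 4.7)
The plan is to deduce Theorem \ref{thm:GeneralTriangle3} directly from Theorem \ref{thm:ResidualProperties}, which is already proved at the level of generality of the underlying construction of Theorem \ref{thm:mainconstruction}. Concretely, Theorem \ref{thm:ResidualProperties} asserts that for \emph{any} group $H$ satisfying the three hypotheses of Theorem \ref{thm:mainconstruction}, residual finiteness of $\pi_1(\mathcal{P})$ implies that $H_{\mathcal{P}}$ is residually-$\mathbb{H}_{\operatorname{Fin}}$. Setting $H := T_{i,j,k}$, the group $H_{\mathcal{P}}$ is by definition $T^{i,j,k}_{\mathcal{P}}$ and the class $\mathbb{H}_{\operatorname{Fin}}$ is precisely $\mathbb{T}^{i,j,k}_{\operatorname{Fin}}$, so Theorem \ref{thm:GeneralTriangle3} is nothing more than Theorem \ref{thm:ResidualProperties} applied to this base group. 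The only thing to check is that $T_{i,j,k}$, for $i,j,k \geq 6$, actually meets the hypotheses of Theorem \ref{thm:mainconstruction}.

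Those three hypotheses have already been verified in the proof of Theorem \ref{thm:GeneralTriangle1}, and I would simply quote that verification: $T_{i,j,k}$ has Serre's property FA by \cite[Example 6.3.5]{trees}; the map $a \mapsto a^{-1}$, $b \mapsto b^{-1}$ descends to a non-trivial element of $\out(T_{i,j,k})$; and by Lemma \ref{lem:malcharTriange} the subgroup $M = \langle x, y\rangle$ from Section \ref{sec:Malchar} is malcharacteristic in $T_{i,j,k}$ and free of rank two. Hence Theorem \ref{thm:ResidualProperties} applies with $H = T_{i,j,k}$, and produces exactly the required residual statement.

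I do not anticipate any substantive obstacle, precisely because the mechanism of Theorem \ref{thm:ResidualProperties} is completely insensitive to the identity of the base group beyond the three bulleted hypotheses. The mechanism, which I would merely cite rather than reproduce, is: take $1 \neq g \in T^{i,j,k}_{\mathcal{P}}$, Britton-reduce it to a word $W = h_0 t^{\epsilon_1} h_1 \cdots t^{\epsilon_m} h_m$, collect the finitely many letters $h_i$ for which the adjacent $t$-letters force membership in $N_H(K_{\mathcal{P}})$ or $N_H(\phi(K_{\mathcal{P}}))$, use residual finiteness of $M_n/K_{\mathcal{P}} \cong \pi_1(\mathcal{P})$ to build a finite quotient $P_g$ separating their images, convert the corresponding kernel into a quotient presentation $\mathcal{P}_g$ of $\mathcal{P}$ with $\pi_1(\mathcal{P}_g)$ finite, and then invoke Theorem \ref{thm:mainconstruction}.\ref{point:mainconstruction3} to obtain the surjection $T^{i,j,k}_{\mathcal{P}} \twoheadrightarrow T^{i,j,k}_{\mathcal{P}_g} \in \mathbb{T}^{i,j,k}_{\operatorname{Fin}}$, under which Britton's lemma guarantees $g$ survives. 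The entire argument is formally identical to the proof of Theorem \ref{thm:intro3}; the only difference is the choice of non-inner automorphism $\phi$ (here the involution, rather than the order-three automorphism used in the equilateral case), and this plays no role in the residual argument. Thus my proposed proof is a single sentence: apply Theorem \ref{thm:ResidualProperties} to $H = T_{i,j,k}$, whose hypotheses are established in the proof of Theorem \ref{thm:GeneralTriangle1}.
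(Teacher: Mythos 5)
Your proposal is correct and matches the paper exactly: the paper omits the proof of Theorem \ref{thm:GeneralTriangle3} on the grounds that it is identical to the proof of Theorem \ref{thm:intro3}, which in turn just invokes Theorem \ref{thm:ResidualProperties} with $H=T_{i,j,k}$, whose hypotheses are verified in the proof of Theorem \ref{thm:GeneralTriangle1}. Your additional recap of the separating-quotient mechanism is accurate but not needed.
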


\begin{theorem}
\label{thm:GeneralTriangle4}
The subgroups $M_n$, $n\in\mathbb{N}$, in the proof of Theorem \ref{thm:GeneralTriangle1} may be chosen in such a way that for every presentation $\mathcal{P}$ with finite generating set and for every countable group presentation $\mathcal{Q}$ there exists a surjection $T_{\mathcal{P}}\twoheadrightarrow T_{\mathcal{P}\ast\mathcal{Q}}$.
\end{theorem}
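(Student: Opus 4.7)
The plan is to apply Theorem \ref{thm:FreeProduct} directly with $H := T_{i, j, k}$. That theorem is stated for any group $H$ satisfying the hypotheses of Theorem \ref{thm:mainconstruction}, and in the proof of Theorem \ref{thm:GeneralTriangle1} we have already verified that $T_{i, j, k}$ with $i, j, k \geq 6$ meets these hypotheses: it has Serre's property FA, has non-trivial outer automorphism group, and by Lemma \ref{lem:malcharTriange} contains a malcharacteristic subgroup which is free of rank two.

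To invoke Theorem \ref{thm:FreeProduct}, we need to fix a distinguished malcharacteristic subgroup $\langle p, q, z_1, z_2, \ldots\rangle$ of $T_{i, j, k}$ which is free on the infinite set $\{p, q, z_1, z_2, \ldots\}$. This is supplied by Proposition \ref{lem:malcharTriangleArbRank}: we take $p, q, z_1, z_2, \ldots$ to be successive finite $C'(1/6)$-subwords of the infinite word
\[
xy(xy^2)\,xy(xy^2)^2\,xy(xy^2)^3\cdots
\]
exhibited after that proposition. Setting $M_n := \langle p, q, z_1, \ldots, z_n\rangle$ and $M_\infty := \langle p, q, z_1, z_2, \ldots\rangle$, each $M_n$ is free of rank $n+2$ and inherits the malcharacteristic property from $M_\infty$ via Lemma \ref{lem:malnormLEQmalchar} (alternatively, malcharacteristicity of each $M_n$ follows, as in the proof of Lemma \ref{lem:malcharArbRank}, from malnormality of $M_n$ in $M_\infty$).

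With these choices in place, the proof of Theorem \ref{thm:FreeProduct} transfers verbatim with $H$ replaced by $T_{i, j, k}$. Specifically, for a presentation $\mathcal{P} = \langle \mathbf{x}; \mathbf{r}(\mathbf{x})\rangle$ with finite generating set and a presentation $\mathcal{Q} = \langle \mathbf{y}; -\rangle$ with no relators, the group $T^{i, j, k}_{\mathcal{P} \ast \mathcal{Q}}$ is obtained from $T^{i, j, k}_{\mathcal{P}}$ by adjoining the relators $tUt^{-1}\phi(U)^{-1}$ for each $U$ in the difference
\[
\langle\langle p, q, \mathbf{r}(z_{|\mathbf{x}|})\rangle\rangle_{F(p, q, z_{|\mathbf{x}\sqcup\mathbf{y}|})} \setminus \langle\langle p, q, \mathbf{r}(z_{|\mathbf{x}|})\rangle\rangle_{F(p, q, z_{|\mathbf{x}|})},
\]
which yields the required surjection. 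The general case (arbitrary countable $\mathcal{Q}$) then follows by composition with a further quotient surjection supplied by Theorem \ref{thm:mainconstruction}.\ref{point:mainconstruction3}. Since every substantive ingredient is already available, there is no essential obstacle; the only point requiring care is the bookkeeping of which normal closures in which ambient free groups to use, exactly as in the proof of Theorem \ref{thm:FreeProduct}.
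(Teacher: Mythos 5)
Your proposal is correct and takes essentially the same route as the paper, which simply observes that the proof of Theorem \ref{thm:GeneralTriangle4} is identical to that of Theorem \ref{thm:intro4}, i.e.\ an immediate application of Theorem \ref{thm:FreeProduct} with $H=T_{i,j,k}$, the required infinite-rank malcharacteristic subgroup being supplied by Proposition \ref{lem:malcharTriangleArbRank}. Your additional remarks (verifying the hypotheses of Theorem \ref{thm:mainconstruction} via Lemma \ref{lem:malcharTriange} and deducing malcharacteristicity of each $M_n$ from Lemma \ref{lem:malnormLEQmalchar}) just make explicit what the paper leaves implicit.
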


\section{Residual finiteness}
\label{sec:RF}
Bumagin--Wise asked if every countable group $Q$ can be realised as the outer automorphism group of a finitely generated, residually finite group $G_Q$ \cite[Problem 1]{BumaginWise2005}. We now prove Corollary \ref{corol:intro3} (which is a corollary of Theorem \ref{thm:intro3}). Corollary \ref{corol:intro3} answers this question of Bumagin--Wise for all finitely generated, residually finite groups $Q$ by taking $G_Q:=T_{\mathcal{P}}$ for $\mathcal{P}$ a presentation of $Q$ with finite generating set.


\begin{proof}[Proof of Corollary \ref{corol:intro3}]
We write $h:=T_i$. By Theorem \ref{thm:intro3} it is sufficient to prove that if $\mathcal{P}=\langle \mathbf{x}; \mathbf{r}\rangle$ has finite generating set $\mathbf{x}$ and $\pi_1(\mathcal{P})$ is finite then $T_{\mathcal{P}}$ is residually finite. Under these conditions the associated subgroup $K_{\mathcal{P}}$ of the group $T_{\mathcal{P}}=H\ast_{(K_{\mathcal{P}}, \phi)}$ has finite index in the subgroup $M_{|\mathbf{x}|}$. As $M_{|\mathbf{x}|}$ is a free group of finite rank ${|\mathbf{x}|}$, the subgroup $K_{\mathcal{P}}$ is finitely generated.

Now, triangle groups are LERF \cite{Scott1978Subgroups}. Hence, $K_{\mathcal{P}}$ is separable in the triangle group $H$. Therefore, the automorphism-induced HNN-extension $T_{\mathcal{P}}$ is residually finite \cite[Lemma 4.4]{BaumslagTretkoff}, as required.
%
\end{proof}

In order to extend Corollary \ref{corol:intro3} to all countable residually finite groups it would be necessary to prove that if $\mathcal{P}=\langle \mathbf{x}; \mathbf{r}\rangle$ is a presentation of a finite group where $|\mathbf{x}|=\infty$ then the group $T_{\mathcal{P}}$ is residually finite. (In Corollary \ref{corol:intro3} we explicitly use the fact that $\mathbf{x}$ is a finite set.)
We therefore have the following corollary of Theorem \ref{thm:intro3}.

\begin{corollary}
\label{corol:residualFiniteness}
Let $\mathbb{T}_{\operatorname{Fin}}^{\infty}$ be the class of groups $T_{\mathcal{P}}$ where $\pi_1(\mathcal{P})$ is finite but where $\mathcal{P}$ has infinite generating set.
Suppose that every element of $\mathbb{T}_{\operatorname{Fin}}^{\infty}$ is residually finite.
Then for every countable, residually finite group $Q$ there exists
a finitely generated, residually finite group $G_Q$ such that $\out(G_Q)\cong Q$.
\end{corollary}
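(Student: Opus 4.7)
The plan is to take $G_Q := T_{\mathcal{P}}$, where $\mathcal{P}$ is any (necessarily countable) presentation of the given countable residually finite group $Q$, and to verify three requirements: that $\out(G_Q) \cong Q$, that $G_Q$ is finitely generated, and that $G_Q$ is residually finite. The first is immediate from Theorem \ref{thm:intro1}.

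First I would observe that $G_Q$ is \emph{always} finitely generated, regardless of the cardinality of $\mathcal{P}$. Indeed, $T_{\mathcal{P}} = T_i \ast_{(K_{\mathcal{P}}, \phi)}$ is by construction an HNN-extension of the two-generator group $T_i = \langle a, b\rangle$ with a single stable letter $t$, so $T_{\mathcal{P}}$ is generated by the three letters $\{a, b, t\}$. Only the associated subgroup $K_{\mathcal{P}}$ and thus the relator set may be infinite; the generating set is not. This removes the apparent need, present in Corollary \ref{corol:intro3}, for the underlying presentation $\mathcal{P}$ to have finite generating set.

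Next I would deduce residual finiteness of $T_{\mathcal{P}}$ by combining Theorem \ref{thm:intro3} with the standing hypothesis. Since $Q = \pi_1(\mathcal{P})$ is residually finite, Theorem \ref{thm:intro3} gives that $T_{\mathcal{P}}$ is residually-$\mathbb{T}_{\operatorname{Fin}}$; and a group residual in a class of residually finite groups is itself residually finite (a two-step residualisation collapses to one). So it suffices to know that every group in $\mathbb{T}_{\operatorname{Fin}}$ is residually finite. I would partition $\mathbb{T}_{\operatorname{Fin}}$ according to whether the defining presentation $\mathcal{Q}$ has finite or infinite generating set: in the former case Corollary \ref{corol:intro3} applies directly (a finite group $\pi_1(\mathcal{Q})$ is trivially residually finite), and in the latter case we are exactly in $\mathbb{T}_{\operatorname{Fin}}^{\infty}$, whose residual finiteness is the hypothesis of the corollary.

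I do not expect any genuine obstacle: the argument is essentially a bookkeeping combination of Theorem \ref{thm:intro3} and Corollary \ref{corol:intro3}, together with the structural observation that finite generation of the HNN-extension $T_{\mathcal{P}}$ is automatic from the construction. The hypothesis on $\mathbb{T}_{\operatorname{Fin}}^{\infty}$ has been inserted precisely to fill the one residual-finiteness gap that Corollary \ref{corol:intro3} leaves open, namely finite groups presented on infinitely many generators; once that gap is filled by assumption, the conclusion follows immediately.
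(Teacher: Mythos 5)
Your proposal is correct and follows exactly the route the paper intends (the paper states this corollary without a formal proof, but the preceding paragraph makes clear the argument is Theorem \ref{thm:intro3} plus the hypothesis on $\mathbb{T}_{\operatorname{Fin}}^{\infty}$, with Corollary \ref{corol:intro3} covering the finitely generated case). Your explicit observations — that $T_{\mathcal{P}}$ is always $3$-generated as an HNN-extension of the $2$-generated $T_i$ with one stable letter, and that residually-(residually finite) collapses to residually finite — are the right bookkeeping steps and are all the proof requires.
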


Note that Corollary \ref{corol:residualFiniteness} represents the best application of Theorem \ref{thm:intro1} in this direction, in the sense that if $T_{\mathcal{P}}$ is residually finite then $Q:=\pi_1(\mathcal{P})$ is residually finite. This is because $Q$ embeds in $\aut(T_{\mathcal{P}})$, so if $Q$ is not residually finite then $T_{\mathcal{P}}$ is not residually finite \cite{Baumslag1963automorphisms}.

\section{Questions}
\label{sec:Qns}

We now pose certain questions which arose in the writing of this paper.

\p{Injectivity of the functor}
The functor of Theorem \ref{thm:intro2} should be injective on both objects and morphisms. Injectivity of the functor is equivalent to a positive answer to the following question, where $\sim$ is the equivalence relation on presentations defined in Section \ref{sec:Construction}.
\begin{question}
Is it true that $T_{\mathcal{P}}\cong T_{\mathcal{Q}}$ if and only if $\mathcal{P}\sim\mathcal{Q}$?
\end{question}
It is unlikely that the functor of Theorem \ref{thm:FunctorialProperties} is injective for all choices of groups $H$ and automorphisms $\phi\in\aut(H)$.
\begin{question}
For which base groups $H$ and automorphisms $\phi\in\aut(H)$ does it hold that in the construction of Theorem \ref{thm:mainconstruction}, $H_{\mathcal{P}}\cong H_{\mathcal{Q}}$ if and only if $\mathcal{P}\sim\mathcal{Q}$?
\end{question}

\p{Residual finiteness}
Question \ref{Q:RF} leads naturally on from Corollary \ref{corol:intro3} and Corollary \ref{corol:residualFiniteness}.
\begin{question}\label{Q:RF}
Suppose $\mathcal{P}=\langle \mathbf{x}; \mathbf{r}\rangle$ is a presentation of a finite group such that the set $\mathbf{x}$ is infinite. Is the group $T_{\mathcal{P}}$ from Theorem \ref{thm:intro1} residually finite?
\end{question}

\p{Free groups}
In Section \ref{sec:MalcharF2} we proved that $F_2$, the free group of rank two, contains a malcharacteristic subgroup $L$ with $L\cong F_2$; it then follows from Lemma \ref{lem:malcharArbRank} that $F_2$ contains malcharacteristic subgroups of arbitrary rank.
\begin{question}
Do the free groups $F_n$, $n>2$, each contain a malcharacteristic subgroup which is free of rank two?
\end{question}

Malnormality is decidable in free groups. In Lemma \ref{lem:malcharlem} we gave conditions relating to the malcharacteristic property being decidable in $F_2$.
\begin{question}
Is it decidable if a subgroup $M$ of a free group $F_n$, $n\geq2$, is malcharacteristic?
\end{question}

Malnormality is a generic property of subgroups of free groups: using the ``few relator'' model for random groups we see that a random finite set of words $\mathbf{s}\subset F_n$ contains no proper powers and satisfies the classical small cancellation conditions \cite[Proposition 2]{Ollivier2005invitation}. It then follows that $\mathbf{s}$ is malnormal in $F_n$ with free basis $\mathbf{s}$ \cite[Theorems~2.11~\&~2.14]{wise2001residual}.
\begin{question}
\label{Qn:generic}
Does a generic set of words $\mathbf{s}\subset F_n$, $n\geq2$, generate, with overwhelming probability, a malcharacteristic subgroup of $F_n$?
\end{question}
Note that Question \ref{Qn:generic} admits a positive answer if we assume $|\mathbf{s}|=1$ \cite[Theorem A]{Kapovich2006Generic}.

\p{Other groups}
As was mentioned in the introduction, we expect Theorem \ref{thm:intro1} to hold for many other classes of groups and not just equilateral triangle groups. Now, Thompson's group $V$ has Serre's property FA \cite{farley2011proof} and has outer automorphisms of any given order \cite{bleak2016further}. Therefore, a positive answer to the following question would allow us to apply Theorem \ref{thm:mainconstruction} to $V$, obtaining a result analogous to Theorem \ref{thm:intro1}.
\begin{question}
Does Thompson's group $V$ contain a malcharacteristic subgroup which is free of rank two?
\end{question}
Thompson's group $T$ also has Serre's property FA \cite{farley2011proof}, but $\out(T)$ is cyclic of order two \cite{Brin1996chameleon}. Therefore, a positive answer to the following question would allow us to apply Theorem \ref{thm:mainconstruction} to $T$, obtaining a result analogous to Theorem \ref{thm:GeneralTriangle1}.
\begin{question}
Does Thompson's group $T$ contain a malcharacteristic subgroup which is free of rank two?
\end{question}
Note that we ask about $T$ and $V$ and not Thompson's group $F$ because it is well-known that $F$ does not contain a non-abelian free group.

\bibliographystyle{amsalpha}
\bibliography{BibTexBibliography}
\end{document}